\documentclass[leqno]{article}

\usepackage[frenchb,english]{babel}
\usepackage[T1]{fontenc}
\usepackage{amsmath}
\usepackage{amssymb}
\usepackage{amsfonts}
\usepackage{enumerate}
\usepackage{vmargin}
\usepackage[all]{xy}
\usepackage{mathrsfs}
\usepackage{mathtools}
\usepackage{lmodern}
\usepackage[colorlinks=true,pagebackref=true]{hyperref}%colorlinks=true,urlcolor=blue
\usepackage{comment}
\setmarginsrb{3cm}{3cm}{3.5cm}{3cm}{0cm}{0cm}{1.5cm}{3cm}
%\setmarginsrb{1}  {2}   {3}   {4}  {5}  {6}  {7}   {8}
%1 est la marge gauche
%2 est la marge en haut
%3 est la marge droite
%4 est la marge en bas
%5 fixe la hauteur de l'entête
%6 fixe la distance entre l'entête et le texte
%7 fixe la hauteur du pied de page
%8 fixe la distance entre le texte et le pied de page
%\setmarginsrb{3cm}{3cm}{2.5cm}{2cm}{0cm}{0cm}{1.5cm}{4cm}

\footskip1.3cm
\mathchardef\sdash="2D

\newcommand{\E}{\ensuremath{\mathbb{E}}}
\newcommand{\N}{\ensuremath{\mathcal{N}}}
\newcommand{\nn}{\ensuremath{\mathbb{N}}}
\newcommand{\B}{\mathrm{B}} %Bounded operators
\newcommand{\I}{\mathrm{I}} %matrice identité

\let\L\relax %\L ne fait plus rien !
\newcommand{\L}{\mathrm{L}}

\newcommand{\M}{\mathrm{M}}

\newcommand{\R}{\ensuremath{\mathbb{R}}}
\newcommand{\C}{\ensuremath{\mathbb{C}}}

\newcommand{\Id}{{\rm Id}}

\renewcommand{\leq}{\ensuremath{\leqslant}}
\renewcommand{\geq}{\ensuremath{\geqslant}}
\newcommand{\n}{\noindent}
\newcommand{\qed}{\hfill \vrule height6pt  width6pt depth0pt}
\newcommand{\norm}[1]{ \| #1  \|}
\newcommand{\bnorm}[1]{ \big\| #1  \big\|}
\newcommand{\Bnorm}[1]{ \Big\| #1  \Big\|}

\newcommand{\co}{\colon}

\newcommand{\ot}{\otimes}
\newcommand{\ovl}{\overline}
\newcommand{\otvn}{\ovl\ot}
\newcommand{\ul}{\mathcal{U}}
\newcommand{\D}{\mathrm{D}}

\let\i\relax %\i ne fait plus rien !
\newcommand{\i}{\mathrm{i}}
\newcommand{\ov}{\overset}
\newcommand{\sa}{\mathrm{sa}}
\newcommand{\JW}{\mathrm{JW}}
\newcommand{\w}{\mathrm{w}} %weak

\renewcommand{\d}{\mathop{}\mathopen{}\mathrm{d}} %opérateur différentiel
\newcommand{\e}{\mathrm{e}} %constante e
\renewcommand{\d}{\mathop{}\mathopen{}\mathrm{d}} 
\let\cal\relax
\newcommand{\cal}{\mathcal}
%opérateur différentiel\renewcommand{\i}{\mathrm{i}}
%i complexe ; \i original: sorte de beta
%\def\Im{{\rm Im} \, }
%\def\Ker{{\rm Ker} \, }
%\def\min{{\rm min} }
%\def\fin{{\rm fin} }
%\def\Aut{{\rm Aut} }
%\def\Re{{\rm Re} }
%\def\Diag{{\rm Diag} }
%\def\Card{{\rm Card} }

 %automorphisme
 %support
\DeclareMathOperator{\tr}{Tr} %trace
\DeclareMathOperator{\Ran}{Ran} %range
\DeclareMathOperator{\dom}{dom} %domaine
 %\Re ne fait plus rien
 %partie réelle
 %cardinal
%%%%%%%%%%%%%%%%%%%%%%%%%%%%%%%%%%%
%%% presentation
\selectlanguage{english}
\newtheorem{thm}{Theorem}[section]

\newtheorem{prop}[thm]{Proposition}

\newtheorem{lemma}[thm]{Lemma}

\newtheorem{remark}[thm]{Remark}

\newenvironment{proof}[1][]{\noindent {\it Proof #1} : }{\hbox{~}\qed
\smallskip
}

\numberwithin{equation}{section}
\usepackage[nottoc,notlot,notlof]{tocbibind}%biblio dans sommaire
\usepackage{tocloft}
\setlength{\cftbeforesecskip}{0pt}
%%%%%%%%%%%%%%%%%%%%%%%%%%%%%%%%%%%%%%%%%%%%%%%%%%%%%%%%%%%%%%%%
%%%%%%%%%%%%%%%%%%%%%%%%%%%%%%%%%%%%%%%%%%%%%%%%%%%%%%%%%%%%%%%%
%Biblio: ce paragraphe permet de reduire l'espacement entre les items bibliographiques

\let\OLDthebibliography\thebibliography
\renewcommand\thebibliography[1]{
  \OLDthebibliography{#1}
  \setlength{\parskip}{0pt}
  \setlength{\itemsep}{0pt plus 0.3ex}
}
%%%%%%%%%%%%%%%%%%%%%%%%%%%%%%%%%%%%%%%%%%%%%%%%%%%%%%%%%%%%%%%%
%%%%%%%%%%%%%%%%%%%%%%%%%%%%%%%%%%%%%%%%%%%%%%%%%%%%%%%%%%%%%%%%
\begin{document}
\selectlanguage{english}
\title{\bfseries{2-positive contractive projections on noncommutative $\L^p$-spaces}}
\author{\bfseries{C\'edric Arhancet - Yves Raynaud}}
\date{}

\maketitle

%%%%%%%%%%%%%%%%%%%%%%%%%%%%%%%%%%%%%%%%%%%%%%%%%%%%%%%%%%%%%%%%
%%%%%%%%%%%%%%%%%%%%%%%%%%%%%%%%%%%%%%%%%%%%%%%%%%%%%%%%%%%%%%%%
\begin{abstract}
We prove the first theorem on projections on general noncommutative $\L^p$-spaces associated with non-type I von Neumann algebras where $1 \leq p < \infty$. This is the first progress on this topic since the seminal work of Arazy and Friedman [Memoirs AMS 1992] where the problem of the description of contractively complemented subspaces of noncommutative $\L^p$-spaces is explicitly raised. We show that the range of a 2-positive contractive projection on an arbitrary noncommutative $\L^p$-space is completely order isometrically isomorphic to some noncommutative $\L^p$-space. This result is sharp and is even new for Schatten spaces $S^p$. Our approach relies on non-tracial Haagerup's noncommutative $\L^p$-spaces in an essential way, even in the case of a projection acting on a Schatten space and is unrelated to the methods of Arazy and Friedman. 
\end{abstract}

%%%%%%%%%%%%%%%%%%%%%%%%%%%%%%%%%%%%%%%%%%%%%%%%%%%%%%%%%%%%%%%%
%%%%%%%%%%%%%%%%%%%%%%%%%%%%%%%%%%%%%%%%%%%%%%%%%%%%%%%%%%%%%%%%

\makeatletter
 \renewcommand{\@makefntext}[1]{#1}
 \makeatother
 \footnotetext{\\%\today
%This work is partially supported by ??.\\
%ANR Project OSQPI (ANR-11-BS01-0008)2010 
\noindent {\it 2020 Mathematics subject classification:}
 Primary 46L51, 46L07. %  46L51  Noncommutative measure and integration.%primary Secondary, 46L51 
%  46M35       Abstract interpolation of topological vector spaces [See also 46B70]
% 46L07       Operator spaces and completely bounded maps [See also 47L25]
% 43A22 Homomorphisms and multipliers of function spaces on groups, semigroups, etc.
% 43A15 Lp-spaces and other function spaces on groups, semigroups, etc.
\\
{\it Key words and phrases}: noncommutative $\L^p$-spaces, projections, complemented subspaces, conditional expectations.}%noncommutative $L^p$-spaces, Fourier
%multipliers, Schur multipliers,

{
  \hypersetup{linkcolor=blue}
\tableofcontents
}

%%%%%%%%%%%%%%%%%%%%%%%%%%%%%%%%%%%%%%%%%%%%%%%%%%%%%%%%%%%%%%%%%%%%%%%%%%%%%%%%%%%%%%%%%%%%%%%%%%%%%%%%%%%%%%%%%%%%%%%%%%%%%%%%%%%%%%%%%%%%%%%%%%%%%%
\section{Introduction}
%%%%%%%%%%%%%%%%%%%%%%%%%%%%%%%%%%%%%%%%%%%%%%%%%%%%%%%%%%%%%%%%%%%%%%%%%%%%%%%%%%%%%%%%%%%%%%%%%%%%%%%%%%%%%%%%%%%%%%%%%%%%%%%%%%%%%%%%%%%%%%%%%%%%%%

The study of projections and complemented subspaces has been at the heart of the study of Banach spaces since the inception of the field, see \cite{Rand01} and \cite{Mos06} for surveys. Recall that a projection $P$ on a Banach space $X$ is a bounded operator $P \co X \to X$ such that $P^2=P$ and that a complemented subspace $Y$ of $X$ is the range of a bounded linear projection $P$. If the projection is contractive, we say that $Y$ is contractively complemented.

Suppose that $1 \leq p <\infty$. A classical result from seventies essentially due to Ando \cite{And66}, and completed by \cite{Tza69} and \cite{BeL74}  tells that a subspace $Y$ of a classical (=commutative) $\L^p$-space $\L^p(\Omega)$ is contractively complemented if and only if $Y$ is isometrically isomorphic to an $\L^p$-space $\L^p(\Omega')$ (see also \cite{Dou65}, \cite{HoT09}, \cite{Ray04}, \cite{See66}). Moreover, $Y$ is the range of a positive contractive projection if and only if there exists an isometrical order isomorphism from $Y$ onto some $\L^p$-space $\L^p(\Omega')$, see \cite[Theorem 4.10]{Rand01} and \cite[Exercise 1 p.~235]{AbA02}. A positive contractive projection on $\L^p(\Omega)$, $1 \leq p < \infty$, $p\ne 2$, acts on the band generated by its range as a weighted conditional expectation \cite{BeL74}. This last result was extended later to a larger class of Banach function spaces \cite{DHdP90}.

It is natural to examine the case of noncommutative $\L^p$-spaces associated to von Neumann algebras. Schatten spaces are the most basic examples of noncommutative $\L^p$-spaces, these are the spaces $S^p$ of all operators $x \co \ell^2 \to \ell^2$ such that $\norm{x}_p=\bigl(\tr(\vert x\vert^p)\bigr)^{\frac{1}{p}}$ is finite. It is known from a long time that the range of a contractive projection $P \co S^p \to S^p$ on a Schatten space $S^p$ is not necessarily isometric to a Schatten space. It is a striking difference with the world of commutative $\L^p$-spaces of measure spaces. Indeed, in their remarkable memoirs \cite{ArF78} and \cite{ArF92}, Arazy and Friedman have succeeded in establishing a complete classification of contractively complemented subspaces of $S^p$. Building blocks of contractively complemented subspaces of $S^p$ are the $\L^p$-norm adaptations of the so-called Cartan factors of type I-IV. 

The description of general contractively complemented subspaces of noncommutative $\L^p$-spaces is an open problem raised explicitly in \cite[p.~99]{ArF92}. If $p=1$, Friedman and Russo \cite{FrB85} have given a description of the ranges of contractive projections on preduals (=noncommutative $\L^1$-spaces) of von Neumann algebras. Such a subspace is isometric to the predual of a $\mathrm{JW}^*$-triple, that is a weak* closed subspace of the space $\B(H,K)$ of bounded operators between Hilbert spaces $H$ and $K$ which is closed under the triple product $xy^*z+zy^*x$. Actually, the Friedman-Russo result is valid for projections acting on the predual of a $\mathrm{JW}^*$-triple, not just on the predual of a von Neumann algebra. 

Since Pisier's work \cite{Pis98} \cite{Pis03}, we can consider noncommutative $\L^p$-spaces and their complemented subspaces in the framework of operator spaces and completely bounded maps \cite{EfR00}. Using Arazy-Friedman Theorem, Le Merdy, Ricard and Roydor \cite[Theorem 1.1]{LRR09} characterized the completely 1-complemented subspaces of $S^p$. They turn out to be the direct sums of spaces of the form $S^p(H,K)$, where $H$ and $K$ are Hilbert spaces. The strategy of their proof is to examine individually each case provided by Arazy-Friedman Theorem and to determine if the associated contractive projection is completely contractive. See also \cite{NO02}, \cite{NeR11} for related results.

%Banach space structure of non-commutative $L^p$-space

Let $\cal{M}$ be a von Neumann algebra and assume that $1 \leq p <\infty$. Recall that a linear map $T \co \L^p(\cal{M}) \to \L^p(\cal{M})$ is $n$-positive for some integer $n \geq 1$ if the tensor product $\Id_{S^p_n} \ot T \co S^p_n(\L^p(\cal{M})) \to S^p_n(\L^p(\cal{M}))$ is a positive map. In particular, such a map is positive. Our main result is the following theorem which implies that the range of a 2-positive contractive projection $P \co \L^p(\cal{M}) \to \L^p(\cal{M})$ on some noncommutative $\L^p$-space $\L^p(\cal{M})$ is completely order isometrically isomorphic to some noncommutative $\L^p$-space $\L^p(\cal{N})$ for some von Neumann algebra $\cal{N}$.

%(see Theorem \ref{thm-ccp-proj-Lp} for a more complete statement).

\begin{thm}[Main Theorem]
\label{main-th-ds-intro}
Consider a von Neumann algebra $\cal{M}$. Suppose that $1 \leq p < \infty$. Let $P \co \L^p(\cal{M}) \to \L^p(\cal{M})$ be a 2-positive contractive projection. 
\begin{enumerate}
	\item There exist a projection $s(P) \in \cal{M}$, a von Neumann subalgebra $\cal{N}$ of $s(P)\cal{M}s(P)$, a normal semifinite faithful weight $\psi$ on $s(P)\cal{M}s(P)$ and a normal faithful conditional expectation $\E \co s(P)\cal{M}s(P) \to s(P)\cal{M}s(P)$ with range $\cal{N}$ leaving $\psi$ invariant such that 
\begin{enumerate}

\item $P(\L^p(\cal{M})) = s(P)P(\L^p(\cal{M}))s(P) = P(s(P)\L^p(\cal{M})s(P))$,

\item the restriction of the projection $P$ to $\L^p(s(P)\cal{M}s(P))$ identifies with the $\L^p$-extension $\E_{p} \co \L^p(s(P)\cal{M}s(P)) \to \L^p(s(P)\cal{M}s(P))$ naturally associated with $\E$ and the weight $\psi$,

\item the projection $P$ is $\cal{N}$-bimodular on $s(P)\L^p(\cal{M})s(P)$, that is
$$
P(x k y) 
= xP(k)y, \quad  x,y \in \cal{N}, k \in s(P)\L^p(\cal{M})s(P),
$$
In particular, the range $P(\L^p(\cal{M}))$ is a $\cal{N}$-bimodule.

\item the range $P(\L^p(\cal{M}))$ is completely order $\cal{N}$-bimodular isometrically isomorphic to $\L^p(\cal{N})$. 
\end{enumerate}
	
\item Moreover, the $\sigma$-finite projections in $\cal{N}$ are exactly the (left or right) supports of the elements of $P(\L^p(\cal{M}))$.  
	
\item Finally, if $1 < p < \infty$ then for any $x \in \L^p(\cal{M})$ we have $P(x)=P(s(P)xs(P))$ and the projection $P$ is necessarily completely positive.
\end{enumerate}
\end{thm}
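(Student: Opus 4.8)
The plan is to reduce the statement, via compression to a corner and the choice of a reference weight, to a \emph{unital} $2$-positive idempotent contraction on a von Neumann algebra, then to identify that map with a normal conditional expectation by a Choi--Effros / multiplicative-domain argument, and finally to translate everything back into $\L^p$-language. Throughout, $\L^p(M)$ is Haagerup's space, so every $h\in\L^p(M)$ has a polar decomposition $h=u|h|$ with $u\in M$ of $\sigma$-finite support, one has the external products $\L^p(M)\cdot\L^q(M)\subseteq\L^r(M)$ for $\tfrac1r=\tfrac1p+\tfrac1q$ (with $\L^\infty(M)=M$), and the $\L^p$ of a matrix amplification is $S^p_n(\L^p(M))=\L^p(\Mat_n(M))$; the recurring tool is the Kadison--Schwarz inequality extracted from $2$-positivity by applying $\Id_{S^p_2}\ot P$ to positive $2\times2$ matrices over $\L^p(M)$.

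First, since $P$ is positive, $R:=P(\L^p(M))$ is $*$-closed, so by polar decomposition the supremum $s(P)$ of the left supports of the elements of $R$ coincides with that of the right supports, whence $h=s(P)hs(P)$ for every $h\in R$ and (using $h=P(h)=P(s(P)hs(P))$) also $R=P(s(P)\L^p(M)s(P))$; this is (1a), and we may replace $M$ by $s(P)Ms(P)$, i.e.\ assume $s(P)=1$. Next we reduce to $M$ $\sigma$-finite by exhausting $M$ by $\sigma$-finite von Neumann subalgebras (legitimate since $\L^p$-elements are $\sigma$-finitely supported); in the $\sigma$-finite case choose $d\in R_+$ of full support, note $P(d)=d$, and let $\psi$ be the normal faithful weight on $M$ with $d=\D_\psi^{1/p}$. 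One then transports $P$ to $\E\co M\to M$, $\E(x)=\D_\psi^{-1/2p}\,P\!\big(\D_\psi^{1/2p}x\,\D_\psi^{1/2p}\big)\,\D_\psi^{-1/2p}$, and shows it extends to a normal $2$-positive unital ($\E(1)=\D_\psi^{-1/2p}P(d)\D_\psi^{-1/2p}=1$) contraction with $\E^2=\E$ and $\psi\circ\E=\psi$. \emph{I expect this transport to be the main obstacle}: the formula only makes sense a priori on a $\sigma^\psi$-analytic core, and turning it into a genuine bounded normal endomorphism of $M$ forces one to prove that $P$ intertwines the modular automorphism group of $\psi$ on $\L^p(M)$; this is precisely where non-tracial Haagerup $\L^p$-theory is unavoidable — note that even for a Schatten space the weight $\psi$ attached to the positive element $d\in R$ is generically non-tracial.

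Granting the transport, the finish is a standard Choi--Effros argument: by Kadison--Schwarz $\E(b^*b)\geq\E(b)^*\E(b)$, so for $b$ in $N:=\E(M)$ the positive element $\E(b^*b)-b^*b=\E(b^*b)-\E(b)^*\E(b)$ is annihilated by $\E$ (because $\E^2=\E$), whence $\psi(\E(b^*b)-b^*b)=0$ and, $\psi$ being faithful, $\E(b^*b)=b^*b$; likewise $\E(bb^*)=bb^*$. Thus $N$ lies in the multiplicative domain of $\E$, so (Choi) $\E(axb)=a\E(x)b$ for $a,b\in N$; hence $N$ is a $\sigma$-weakly closed unital $*$-subalgebra of $M$ and $\E$ is a normal faithful $\psi$-preserving conditional expectation onto $N$. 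Undoing the sandwiching, $P$ agrees on the dense subspace $\D_\psi^{1/2p}M\D_\psi^{1/2p}$, hence everywhere, with the canonical projection $\E_{\psi,p}$ associated with $\E$ and $\psi$ — this is (1b); the $N$-bimodularity (1c) is the $\L^p$-avatar of the inclusion $N\subseteq$ (multiplicative domain); and since $R=\E_{\psi,p}(\L^p(M))=\L^p(N)$ sits inside $\L^p(M)$ through the canonical completely isometric, completely order, $N$-bimodular inclusion induced by $N\hookrightarrow s(P)Ms(P)$, we obtain (1d). Part (2) is then immediate, as the supports of the elements of $\L^p(N)$ are exactly the $\sigma$-finite projections of $N$.

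Finally, for (3) with $1<p<\infty$: $\L^{p'}(M)$ is again a Haagerup space and $P^*$ is a $2$-positive contractive projection on it, so the preceding analysis puts its range inside $s(P)\L^{p'}(M)s(P)$ (equivalently $s(P^*)=s(P)$); then for $x\in\L^p(M)$, $y\in\L^{p'}(M)$,
\[
\la P(x),y\ra=\la x,P^*(y)\ra=\la x,s(P)P^*(y)s(P)\ra=\la s(P)xs(P),P^*(y)\ra=\la P(s(P)xs(P)),y\ra,
\]
so $P(x)=P(s(P)xs(P))$. For $p=1$ the adjoint acts on $M$, outside this $\L^p$-duality, which is why (3) is stated only for $1<p<\infty$.
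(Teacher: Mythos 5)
There is a genuine gap, and it sits exactly at the theorem's core. The heart of the matter is the passage from $P$ to a map on the von Neumann algebra: one must show that $P\big(h^{\frac12}xh^{\frac12}\big)$ lies in $h^{\frac12}Mh^{\frac12}$, i.e.\ equals $h^{\frac12}\E_h(x)h^{\frac12}$ for a unital, normal, contractive, $2$-positive $\E_h \co s(h)Ms(h)\to s(h)Ms(h)$. You define $\E$ by formally conjugating $P$ with $d^{\pm 1/2}$, admit the formula only makes sense on an analytic core, conjecture that making it rigorous requires $P$ to intertwine the modular group of $\psi$, and then leave it unproved. That is not a technicality: a priori $P\big(d^{\frac12}xd^{\frac12}\big)$ is merely an element of $\L^p(M)$, and nothing in your argument shows it can be written as $d^{\frac12}(\text{bounded operator})d^{\frac12}$, which is the whole content of the step. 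The paper's mechanism (Theorem \ref{Th-relevement-cp}, refining Junge--Ruan--Xu) avoids modular intertwining altogether: from $0\le h^{\frac12}xh^{\frac12}\le \norm{x}_\infty h$ and positivity one gets $0\le P\big(h^{\frac12}xh^{\frac12}\big)\le \norm{x}_\infty h$, and the order-theoretic Radon--Nikodym lemma in Haagerup $\L^p$ (Lemma \ref{lemma-DP-bis}, proved via the dual-action characterization \eqref{Def-Haagerup}) produces $\E_h(x)\in s(h)Ms(h)$ directly, with normality obtained by exhibiting a pre-adjoint on $\L^1$ and $n$-positivity by amplification. Moreover, the invariance $\psi\circ\E=\psi$, which your Choi--Effros/multiplicative-domain finish consumes, is also only asserted; in the paper it is Lemma \ref{Lemma-prservation-state} and needs smoothness of $\L^p$ and uniqueness of norming functionals (with a separate argument at $p=1$). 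Without these two inputs your proof does not get off the ground, although your Choi--Effros finish is a legitimate alternative to the paper's St\o rmer--Tomiyama route once they are available.

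Two further steps are materially incomplete. The reduction of the non-$\sigma$-finite case ``by exhausting $M$ by $\sigma$-finite subalgebras'' is not available in this bald form: when $M$ is not $\sigma$-finite there is no positive element of $\Ran P$ with support $s(P)$, hence no single $d$ and no state $\psi$, and the local expectations live on different corners; the paper's Subsection \ref{The-non-sigma} proves that the set of supports of elements of $\Ran P$ is a lattice closed under countable joins, arbitrary meets and relative complements, extracts a disjoint family $(s_i)$ with $\bigvee_i s_i = s(P)$, defines $\psi=\sum_i\psi_i$, and glues the compatible local expectations by w*-limits --- none of which is routine exhaustion, and all of which is needed even to state (1b). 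Finally, in part 3 your chain of equalities needs $s(P^*)=s(P)$; applying (1a) to $P^*$ only gives $\Ran P^*\subseteq s(P^*)\L^{p^*}(M)s(P^*)$, and the identification of $s(P^*)$ with $s(P)$ requires an argument --- the paper gets it from the explicit duality mapping $J_{\L^p(M)}(h)=\norm{h}_p^{2-p}|h|^{p-1}u^*$ together with Calvert's observation that $J$ carries $\Ran P$ onto $\Ran P^*$ (Proposition \ref{prop:range-pc}, which in fact needs no positivity at all); for positive $P$ one could instead invoke $P^*(h^{p-1})=h^{p-1}$, but some such step must be supplied.
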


The $\L^p$-extension $\E_{p}$ on noncommutative $\L^p$ spaces appear in the literature on noncommutative probability, in association with a normal {\it state} $\psi$, see e.g.~the works \cite{Gol85}, \cite{HiT87}, \cite{JuX03}, \cite{HJX10} and \cite{AcC82}. The most achieved one is probably that of \cite{JuX03}.
%The majority of the earlier sources use interpolation theory, leading to apparently different kinds of conditional expectations. These works were subsumed and unified by the paper \cite{JuX03} where the interpolation arguments were replaced by a smart handling of Haagerup's construction. 
We cannot limit ourselves to this <<probabilistic frame>>, since the von Neumann algebras that we are dealing with are not necessarily countably decomposable (=$\sigma$-finite), and thus may not support a faithful normal state. For this reason, we give a definition of these maps which differs from those of these authors, but follows a very classical scheme in commutative probability, and then we check  the coherence of this new definition with that of \cite{JuX03} in the <<probabilistic case>>. 
%Following a very classical scheme in commutative probability, this definition is simply by dualizing the inclusion map of the dual $\L^p$-space of the small algebra into the dual $\L^p$ of the big algebra (that inclusion results from the $\psi$-invariance condition for $\E$ by the theory of Haagerup $\L^p$-spaces, as shown in \cite{JuX03} in the probabilistic case). The dualization idea appears already in \cite{HiT87}, but the interpolation point of view adopted there is not convenient, as pointed above.

The result stated in the main theorem is even new for Schatten spaces $S^p$. The assumption of 2-positivity cannot be dropped. Indeed, if $\sigma \co S^p \to S^p$, $[x_{ij}] \mapsto [x_{ji}]$ denotes the transpose map then the linear map $P \ov{\mathrm{def}}{=}\frac{1}{2}(\Id_{S^p} +\sigma) \co S^p \to S^p$ is a positive contractive projection on the space $\{x \in S^p\,:\, \sigma(x)=x\}$ of symmetric matrices. 

The more general and more complicated case of \textit{positive} contractive projections will be investigated in a companion paper \cite{Arh20}, where we will make the link with the theory of $\JW^*$ algebras and their complex interpolation spaces and where we will describe the \textit{fine} structure of positive contractive projections. In \cite{Arh23b}, we will introduce nonassociative $\L^p$-spaces associated to $\JW^*$-algebras by interpolation and will show that all tracial nonassociative $\L^p$-spaces from $\JW^*$-factors arise as positively contractively complemented subspaces of noncommutative $\L^p$-spaces. In \cite{Arh23a}, we will describe the structure of contractively decomposable projections. Note that the class of decomposable maps is thoroughly studied in the memoir \cite{ArK23}. Finally, we refer to \cite{PiX03} and \cite{HRS03} for more information on the structure of noncommutative $\L^p$-spaces. 

\paragraph{Approach of the paper} 
We consider the <<support projection>> $s(P)\ov{\mathrm{def}}{=} \bigvee_{h \in \Ran P, h \geq 0} s(h)$ of the non-zero 2-positive contractive projection $P \co \L^p(\cal{M}) \to \L^p(\cal{M})$. Here $s(h)$ is the support projection of the element $h$. It is easy to check the equality $
\Ran P
=P(s(P)\L^p(\cal{M})s(P))$. 

For the $\sigma$-finite case, we establish in Proposition \ref{Prop-sh=sP} that there exists a non-zero positive element $h$ of the space $\Ran P$ such that $s(P)=s(h)$, crucially using the assumption of $\sigma$-finiteness. Hence we have the equality
\begin{equation}
\label{intro-fin}
\Ran P
=P(s(h)\L^p(\cal{M})s(h)). 
\end{equation}
We continue this presentation by assuming that the noncommutative $\L^p$-space $\L^p(\cal{M})$ is defined by a normal faithful state $\varphi$ of the von Neumann algebra $\cal{M}$. We will see that we can suppose that the support projection $s(h)$ belongs to the centralizer of the state $\varphi$.

Our approach relies on a symmetric two-sided lifting result (Theorem \ref{Th-relevement-cp}) allowing us to introduce a unital normal 2-positive map $\E \co s(h)\cal{M}s(h) \to s(h)\cal{M}s(h)$ satisfying
\begin{equation*}
\quad P\big(h^{\frac{1}{2}}xh^{\frac{1}{2}}\big)
=h^{\frac{1}{2}} \E(x) h^{\frac{1}{2}}, \qquad x \in s(h)\cal{M}s(h).
\end{equation*}
This reduces the problem to the analysis of this map.

In Lemma \ref{Lemma-faithful}, we subtly show that this map $\E$ is a normal faithful conditional expectation onto a von Neumann subalgebra $\cal{N}$ of the reduced von Neumann algebra $\cal{M}_h \ov{\mathrm{def}}{=} s(h)\cal{M}s(h)$, using geometric properties of noncommutative $\L^p$-spaces. Afterwards, we introduce in \eqref{psih-def} a normal faithful positive linear form $\psi$ on the reduced von Neumann algebra $\cal{M}_h$ defined by
\begin{equation*}
\psi(x)
\ov{\mathrm{def}}{=} \tr_\varphi(h^px), \quad x \in \cal{M}_h
\end{equation*}
where $\tr_\varphi$ is the <<Haagerup trace>> associated with the state $\varphi$. We also consider the restriction $\varphi_{s(h)}$ of the state $\varphi$ to the reduced von Neumann algebra $\cal{M}_h$.

We then show in Lemma \ref{Lemma-prservation-state} that the conditional expectation $\E$ preserves $\psi$.
Consequently, we can consider the $\L^p$-extension $\E_p \co \L^p(\cal{M}_h,\psi) \to \L^p(\cal{M}_h,\psi)$
of the conditional expectation $\E$. We also introduce the complete order and isometric <<change of density>> map $\kappa \co \L^p(\cal{M}_h,\psi) \to \L^p(\cal{M}_h,\varphi_{s(h)})$. We then prove in \eqref{sans-fin-3} that the restriction of the projection $P$ to the subspace $s(h)\L^p(\cal{M})s(h)$ can be expressed as
\begin{equation}
\label{sans-fin-intro1}
P|_{s(h)\L^p(\cal{M})s(h)}
=\kappa \circ \E_p \circ \kappa^{-1}
\end{equation}
where we use the classical identification of the subspace $s(h)\L^p(\cal{M})s(h)$ with the noncommutative $\L^p$-space $\L^p(\cal{M}_h,\varphi_{s(h)})$, which is justified since the projection $s(h)$ belongs to the centralizer of the state $\varphi$. In particular, we obtain that the range
$$
\Ran P 
\ov{\eqref{intro-fin}}{=} P(s(h)\L^p(\cal{M})s(h))
\ov{\eqref{sans-fin-intro1}}{=} \kappa\E_p\kappa^{-1}(\L^p(\cal{M}_h)).
$$
is complete order isometrically isomorphic to a noncommutative $\L^p$-space.

For the non-$\sigma$-finite case, we consider the set $\mathcal{P}(P)$ of all support projections of positive elements in the range $\Ran P$ of the projection $P$. We demonstrate in Theorem \ref{Thm-Disjoint} that there exists a family $(s_i)_{i \in I}$ of pairwise disjoint projections in the set $\mathcal{P}(P)$ such that $
s(P)
=\bigvee_{i \in I} s_i$. For each $i \in I$, we introduce a positive element $h_i$ of $\Ran P$ with support projection $s(h_i)=s_i$. Subsequently, we initiate a gluing procedure that allows us to essentially use the $\sigma$-finite case.%Let $\psi_i\ov{\mathrm{def}}{=} \tr(h_i^p\, \cdot)$ be the normal positive linear form on the von Neumann algebra $\cal{M}$ associated with the positive element $h_i^p$ of $\L^1(\cal{M},\varphi)$.

It should also be noted that the use of \textit{non-tracial} Haagerup's noncommutative $\L^p$-spaces is crucial even for the case of \textit{tracial} noncommutative $\L^p$-spaces.

\paragraph{Structure of the paper} The paper is organized as follows. Section \ref{Haagerup-noncommutative} gives a presentation of Haagerup's noncommutative $\L^p$-spaces, including some complementary results. In the next Section \ref{Sec-conditional}, we investigate $\L^p$-extensions of normal conditional expectations on noncommutative $\L^p$-spaces in a general formulation which makes sense for non-$\sigma$-finite von Neumann algebras. These results are specially used in Section \ref{End-of-the-end}. In Section \ref{Sec-projections-von}, a folklore crucial fact on positive projections acting on von Neumann algebras is highlighted. In Section \ref{Positive-maps}, we describe our lifting result of positive maps acting on noncommutative $\L^p$-spaces. In Section \ref{From-local-to-global}, we define several notions of support projections of a contractive projection. Section \ref{Sec-a-local} contains a local description of the action of a 2-positive contractive projection. We deduce a proof of Theorem \ref{main-th-ds-intro} in the $\sigma$-finite case. In Section \ref{Carac-Nh} and Section \ref{End-of-the-end}, we present the more elaborated treatment of the general case in the non-sigma-finite case. %At this stage the two first points in our main theorem are proven. The last point of the Main Theorem is a consequence of the general result presented in the last .

%%%%%%%%%%%%%%%%%%%%%%%%%%%%%%%%%%%%%%%%%%%%%%%%%%%%%%%%%%%%%%%%%%%%%%%%%%%%%%%%%%%%%%%%%%%%%%%%%%%%%%%%%%%%%%%%%%%%%%%%%%%%%%%%%%%%%%%%%%%%%%%%%%%%%%%
%\section{Preliminaries}
%\label{sec-Preliminaries}
%%%%%%%%%%%%%%%%%%%%%%%%%%%%%%%%%%%%%%%%%%%%%%%%%%%%%%%%%%%%%%%%%%%%%%%%%%%%%%%%%%%%%%%%%%%%%%%%%%%%%%%%%%%%%%%%%%%%%%%%%%%%%%%%%%%%%%%%%%%%%%%%%%%%%%%

%%%%%%%%%%%%%%%%%%%%%%%%%%%%%%%%%%%%%%%%%%%%%%%%%%%%%%%%%%%%%%%%%%%%%%%%%%%%%%%%%%%%%
\section{Haagerup's noncommutative $\L^p$-spaces}
\label{Haagerup-noncommutative}

The readers are referred to the surveys \cite{Kos14}, \cite{PiX03}, \cite{Terp81}, \cite{Ray03} and references therein for information on noncommutative $\L^p$-spaces.

It is well-known that there exist several equivalent constructions of noncommutative $\L^p$-spaces associated with a von Neumann algebra. In this paper, we will use Haagerup's noncommutative $\L^p$-spaces introduced in \cite{Haa79b} and presented in a more detailed way in \cite{Terp81}. Here, $\cal{M}$ will denote a general von Neumann algebra acting on a Hilbert space $H$ and we denote by $s_l(x)$ and $s_r(x)$ the left support and the right support of an operator $x$. If $x$ is a positive operator then $s_l(x)=s_r(x)$ is called the support of $x$ and denoted by $s(x)$.

If $\cal{M}$ is equipped with a normal semifinite faithful trace, then the topological $*$-algebra of all (unbounded) $\tau$-measurable operators $x$ affiliated with $\cal{M}$ is denoted by $\L^0(\cal{M},\tau)$. If $a,b \in \L^0(\cal{M},\tau)_+$, we have
\begin{equation}
\label{Ine-measurable}
a \leq b
\iff \dom b^{\frac{1}{2}} \subset \dom a^{\frac{1}{2}}
\text{ and } \norm{a^{\frac{1}{2}}\xi}_H \leq \norm{b^{\frac{1}{2}}\xi}_H, \text{ for any } \xi \in \dom b^{\frac{1}{2}}.
\end{equation}
If $a,b \in \L^0(\cal{M},\tau)$, we have
\begin{equation}
\label{support-properties}
ab=0 \Rightarrow s_r(a)b=0 \text{ and } as_l(b)=0.
\end{equation}

In the sequel, we fix a normal semifinite faithful weight $\varphi$ on $\cal{M}$ and $\sigma^\varphi=(\sigma_t^\varphi)_{t \in \R}$ denote the one-parameter modular automorphisms group associated with $\varphi$ \cite[p.~92]{Tak03}. 

%Let $h \in M_+$, and consider the map $\varphi_h \co M_+ \to [0,\infty]$ given by $\varphi_h(x) = \varphi(h^{\frac{1}{2}}xh^{\frac{1}{2}})$. This map is a normal weight. 
%%(Boey p49)
%Furthermore if $h$ is invertible, then $\varphi_h$ is faithful and semifinite. More generally, with an affiliated operator $h$ we can define a weight $\varphi_h$ \cite[page 103]{Tak03}. If $M$ is a von Neumann algebra equipped with a normal semifinite faithful trace $\tau$ then every normal semi-finite weight $\varphi$ can be written uniquely in the form $\varphi=\tau_h$ with a positive selfadjoint operator $h$ affiliated with $M$ \cite[page 122]{Tak03} (often denoted by $\tau(h\cdot)$).%and called Radon-Nikodym derivative

For $1 \leq p < \infty$, the Banach spaces $\L^p(\cal{M})$ are constructed as spaces of measurable operators relatively not to $\cal{M}$ but to some semifinite bigger von Neumann algebra, namely, the crossed product $\tilde{\cal{M}} \ov{\mathrm{def}}{=} \cal{M} \rtimes_{\sigma^\varphi} \R$ of $\cal{M}$ the modular automorphisms group, that is, the von Neumann subalgebra of $\B(\L^2(\R,H))$ generated by the operators $\pi(x)$ and $\lambda_s$, where $x \in \cal{M}$ and $s \in \R$, defined by
$$
\big(\pi(x)\xi\big)(t) \ov{\mathrm{def}}{=} \sigma^{\varphi}_{-t}(x)(\xi(t))
\quad \text{and} \quad
\lambda_s(\xi(t)) \ov{\mathrm{def}}{=} \xi(t-s), \quad t \in \R, \ \xi \in \L^2(\R,H).
$$ 
For any $s \in \R$, let $W(s)$ be the unitary operator on $\L^2(\R,H)$ defined by
$$
\big(W(s)\xi\big)(t)
\ov{\mathrm{def}}{=} \e^{-\i s t} \xi(t),\quad \xi \in \L^2(\R,H).
$$
The dual action $\hat{\sigma} \co \R \to \B(\tilde{\cal{M}})$ on the von Neumann algebra $\tilde{\cal{M}}$ \cite[p.~260]{Tak03} is given by
\begin{equation}
\label{Dual-action}
\widehat{\sigma}_s(x)
\ov{\mathrm{def}}{=} W(s)xW(s)^*,\quad x \in \tilde{\cal{M}},\ s \in \R.
\end{equation}
Then, by \cite[p.~259]{Tak03}%\cite[Lemma 3.6]{Haa4}
, $\pi(\cal{M})$ is the fixed subalgebra of $\tilde{\cal{M}}$ under the family of automorphisms $\widehat{\sigma}_s$:
\begin{equation}
\label{carac-Pi-de-M}
\pi(\cal{M})
=\big\{x \in \tilde{\cal{M}}\ : \ \widehat{\sigma}_s(x)=x \quad \text{for all }s \in \R\big\}.
\end{equation}
We identify $\cal{M}$ with the subalgebra $\pi(\cal{M})$ in the crossed product $\tilde{\cal{M}}$. If $\psi$ is a normal semifinite weight on $\cal{M}$, we denote by $\widehat{\psi}$ its Takesaki's dual weight on the crossed product $\tilde{\cal{M}}$. We can give the following definition of \cite{Haa78}. %defined in \cite[page 300]{Str81}. 
Indeed, Haagerup introduces an operator valued weight $T \co \cal{M}^+ \to \bar{\cal{M}}^+$ with values in the extended positive part $\bar{\cal{M}}^+$ of $\cal{M}$, formally defined by 
\begin{equation}
\label{Operator-valued}
T(x)
=\int_\R \widehat{\sigma}_s(x)\d s
\end{equation}
and shows that for a normal semifinite weight $\psi$ on $\cal{M}$, its dual weight is 
\begin{equation}
\label{Def-poids-dual}
\widehat{\psi} \ov{\mathrm{def}}{=} \bar \psi\circ T
\end{equation}
where $\bar\psi$ denotes the natural extension of the normal weight $\psi$ to $\bar{\cal{M}}^+$. This dual weight  satisfies the $\widehat{\sigma}$-invariance relation $\widehat{\psi} \circ \widehat{\sigma}=\widehat{\psi}$, see \cite[(10) p.~26]{Terp81}. By \cite[Chap.~II, Lemma 1]{Terp81}%\cite[p.~301]{Str81} \cite[Theorem 3.7]{Haa4}
, the map $\psi \to \widehat{\psi}$ is a bijection from the set of normal semifinite weights on $\cal{M}$ onto the set of normal semifinite $\widehat{\sigma}$-invariant weights on $\tilde{\cal{M}}$.

Recall that by \cite[Lemma 5.2 and Remark p.~343]{Haa79a} and \cite[Theorem 1.1 (c)]{Haa78} the crossed product $\tilde{\cal{M}}$ is semifinite and there is a unique normal semifinite faithful trace $\tau=\tau_{\varphi}$ on $\tilde{\cal{M}}$ satisfying $(\mathrm{D}\widehat{\varphi}:\mathrm{D} \tau)_t=\lambda_t$ for any $t \in \R$ where $(\mathrm{D}\widehat{\varphi}:\mathrm{D} \tau)_t$ denotes the Radon-Nikodym cocycle \cite[p.~48]{Str81} \cite[p.~111]{Tak03} of the dual weight $\widehat{\varphi}$ with respect to $\tau$. Moreover, $\tau$ satisfies the relative invariance $\tau \circ \widehat{\sigma}_s = \e^{-s}\tau$ for any $s \in \R$ by \cite[Lemma 5.2]{Haa79a}.

If $\psi$ is a normal semifinite weight on the von Neumann algebra $\cal{M}$, we denote by $h_\psi$ the Radon-Nikodym derivative of the dual weight $\widehat{\psi}$ with respect to $\tau$ given by \cite[Theorem 4.10 p.~67]{Str81}. By \cite[Corollary 4.8 p.~67]{Str81}, note that the relation of $h_\psi$ with the Radon-Nikodym cocycle of $\widehat{\psi}$ is
\begin{equation}
\label{Radon-Nikodym-1}
(\D\widehat\psi:\mathrm{D}\tau)_t
= h_\psi^{\i t}, \quad t \in \R.
\end{equation}
If $\psi=\varphi$, we let $D_\varphi \ov{\mathrm{def}}{=} h_\varphi$ and we call it the density operator of $\varphi$.

By \cite[Chap.~II, Proposition 4]{Terp81}, the mapping $\psi \to h_\psi$ gives a bijective correspondence between the set of all normal semifinite weights on the von Neumann algebra $\cal{M}$ and the set of positive selfadjoint operators $h$ affiliated with $\tilde{\cal{M}}$ satisfying
\begin{equation}
\label{eq:def-L1}
\widehat{\sigma}_s(h)
=\e^{-s}h, \quad s \in \R.
\end{equation}
Moreover, by \cite[Chap.~II, Corollary 6]{Terp81}, $\omega$ belongs to $\cal{M}_*^+$ if and only if $h_\omega$ belongs to $\L^0(\tilde{\cal{M}},\tau)_+$. One may extend by linearity the map $\omega \mapsto h_\omega$ to the whole of $\cal{M}_*$. The Haagerup space $\L^1(\cal{M},\varphi)$ is defined as the set $\{h_\omega : \omega \in \cal{M}_*\}$, i.e.~the range of the previous map. %This is a closed linear subspace of $\L^0(\tilde{\cal{M}},\tau)$, characterized by the conditions \eqref{eq:def-L1}.

By \cite[Chap.~II, Theorem 7]{Terp81}, the mapping $\omega \mapsto h_\omega$, $\cal{M}_* \to \L^1(\cal{M},\varphi)$ is a linear order isomorphism which preserves the conjugation, the module, and the left and right actions of $\cal{M}$. Then $\L^1(\cal{M},\varphi)$ may be equipped with a continuous linear functional $\tr \co \L^1(\cal{M}) \to \C$  defined by
\begin{equation}
\label{Def-tr}
\tr h_\omega
\ov{\mathrm{def}}{=} \omega(1),\quad \omega \in \cal{M}_*
\end{equation}
\cite[Chap.~II, Definition 13]{Terp81}. A norm on the space $\L^1(\cal{M},\varphi)$ may be defined by $\norm{h}_1 \ov{\mathrm{def}}{=} \tr(|h|)$ for every $h \in \L^1(\cal{M},\varphi)$. By \cite[Chap.~II, Proposition 15]{Terp81}, the map $\cal{M}_* \to \L^1(\cal{M},\varphi)$, $\omega \mapsto h_\omega$ is a surjective isometry.

More generally for $1 \leq p \leq \infty$, the Haagerup $\L^p$-space $\L^p(\cal{M},\varphi)$ associated with the normal faithful semifinite weight $\varphi$ is defined \cite[Chap.~II, Definition 9]{Terp81} as the subset of the topological $*$-algebra $\L^0(\tilde{\cal{M}},\tau)$ of all (unbounded) $\tau$-measurable operators $x$ affiliated with $\tilde{\cal{M}}$ satisfying for any $s \in \R$ the condition
\begin{equation}
\label{Def-Haagerup}
\widehat{\sigma}_s(x)
=\e^{-\frac{s}{p}}x \quad \text{if } p<\infty 
\quad \text{and} \quad 
\widehat{\sigma}_s(x)
=x \quad \text{ if } p=\infty	
\end{equation}
where $\widehat{\sigma}_s \co \L^0(\tilde{\cal{M}},\tau) \to \L^0(\tilde{\cal{M}},\tau)$ is here the continuous $*$-automorphism obtained by a natural extension of the dual action \eqref{Dual-action} on $\mathcal{M}$. By \eqref{carac-Pi-de-M}, the space $\L^\infty(\cal{M},\varphi)$ coincides with $\pi(\cal{M})$ that we identify with $\cal{M}$. The spaces $\L^p(\cal{M},\varphi)$ are closed selfadjoint linear subspaces of the space $\L^0(\tilde{\cal{M}},\tau)$. They are closed under left and right multiplications by elements of $\cal{M}$. If $h=u|h|$ is the polar decomposition of $h \in \L^0(\tilde{\cal{M}},\tau)$ then by \cite[Chap.~II, Proposition 12]{Terp81} we have
$$
h \in \L^p(\cal{M},\varphi)
\iff 
u \in \cal{M} \text{ and } |h| \in \L^p(\cal{M},\varphi).
$$
%By \cite[Chap.~II, Prop. 10]{Terp81},

Suppose that $1 \leq p<\infty$. By \cite[Chap.~II, Proposition 12]{Terp81} and its proof, for any $h \in \L^0(\tilde{\cal{M}},\tau)_+$, we have $h^p \in \L^0(\tilde{\cal{M}},\tau)_+$. Moreover, an element $h \in \L^0(\tilde{\cal{M}},\tau)$ belongs to the space $\L^p(\cal{M},\varphi)$ if and only if $|h|^p$ belongs to the space $\L^1(\cal{M},\varphi)$. A norm on the vector space $\L^p(\cal{M},\varphi)$ is then defined by the formula 
\begin{equation}
\label{Def-norm-Lp}
\norm{h}_p
\ov{\mathrm{def}}{=} (\tr |h|^p)^{\frac{1}{p}}
\end{equation} 
if $1 \leq p < \infty$ and by $\norm{h}_\infty \ov{\mathrm{def}}{=}\norm{h}_\cal{M}$, see \cite[Chap.~II, Definition 14]{Terp81}.

\paragraph{Duality} Let $p, p^* \in [1,\infty]$ with $\frac{1}{p}+\frac{1}{p^*}=1$. By \cite[Chap.~II, Proposition 21]{Terp81}, for any $h \in \L^p(\cal{M},\varphi)$ and any $k \in \L^{p^*}(\cal{M},\varphi)$ the elements $hk$ and $kh$ belong to the space $\L^1(\cal{M},\varphi)$ and we have the tracial property $\tr(hk)=\tr(kh)$.

If $1 \leq p<\infty$, by \cite[Ch.~II, Theorem 32]{Terp81} the bilinear form $\L^p(\cal{M},\varphi) \times \L^{p^*}(\cal{M},\varphi)\to \C$, $(h,k) \mapsto \tr(h k)$ defines a duality bracket between the Banach spaces $\L^p(\cal{M},\varphi)$ and $\L^{p^*}(\cal{M},\varphi)$, for which $\L^{p^*}(\cal{M},\varphi)$ is isometrically the dual of $\L^p(\cal{M},\varphi)$. 

On the other hand, if the weight $\varphi$ is tracial, i.e.~$\varphi(x^*x)=\varphi(xx^*)$ for all $x \in \cal{M}$, then the Haagerup space $\L^p(\cal{M},\varphi)$ isometrically coincides with Dixmier's classical tracial noncommutative $\L^p$-space, see \cite[p.~62]{Terp81}.

\paragraph{Change of weight} It is essentially proved in \cite[p.~59]{Terp81} %(see also \cite[Theorem 5.1]{Ray03})  
that $\L^p(\cal{M},\varphi)$ is independent of $\varphi$ up to an isometric isomorphism preserving the order and modular structure of $\L^p(\cal{M},\varphi)$, as well as the external products and Mazur maps. In fact given two normal semifinite faithful weights $\varphi_1, \varphi_2$ on the von Neumann algebra $\cal{M}$ there is a $*$-isomorphism $\kappa \co \tilde{\mathcal{M}}_1 \to \tilde{\mathcal{M}}_2$ between the crossed products $\tilde{\cal{M}}_i\ov{\mathrm{def}}{=} \cal{M} \rtimes_{\sigma^{\varphi_i}} \R$ preserving $\cal{M}$, as well as the dual actions and pushing the trace on $\tilde{\cal{M}}_1$ onto the trace on $\tilde{\cal{M}}_2$, that is 
\begin{align}
\label{kappa}
\pi_2 = \kappa\circ \pi_1,\quad \hat\sigma_2\circ\kappa 
= \kappa\circ\hat\sigma_1 
\quad \text{and} \quad
\tau_2 \circ \kappa=\tau_1.
\end{align}
Furthermore, $\kappa$ extends naturally to a topological $*$-isomorphism $\hat{\kappa} \co \L^0(\tilde{\cal{M}}_1,\tau_1) \to \L^0(\tilde{\cal{M}}_2,\tau_2)$ between the algebras of measurable operators.%, which restricts to isometric $*$-isomorphisms between the respective Banach spaces $\L^p(\cal{M}_1,\varphi_1)$ and $\L^p(\cal{M}_2,\varphi_2)$.

Moreover, it turns out also that for every normal semifinite faithful weight $\psi$ on the von Neumann algebra $\cal{M}$, the dual weights $\hat\psi_i$ corresponds through $\kappa$, that is $\hat{\psi}_2 \circ \kappa=\hat\psi_1$. It follows that if $\omega \in \cal{M}_*$ the corresponding Radon-Nikodym derivatives must verify $h_{\omega,2}=\hat{\kappa}(h_{\omega,1})$. In particular if $\omega \in \cal{M}_*^+$, we have
\begin{equation}
\tr_1 h_{\omega,1}
\ov{\eqref{Def-tr}}{=} \omega(1) 
\ov{\eqref{Def-tr}}{=} \tr_2 h_{\omega,2}
=\tr_2 \hat{\kappa}(h_{\omega,1}).
\end{equation}
Hence $\hat{\kappa} \co \L^1(\cal{M},\varphi_1) \to \L^1(\cal{M},\varphi_2)$ preserves the functionals $\tr$:
\begin{equation}
\label{Trace-preserving}
\tr_1
=\tr_2 \circ \hat{\kappa}.
\end{equation}
Since $\hat{\kappa}$ preserves the $p$-powers operations, i.e.~$\hat{\kappa}(h^p)=\big(\hat{\kappa}(h)\big)^p$ for any $h \in \L^0(\cal{M}_1)$, it induces an isometry from $\L^p(\cal{M},\varphi_1)$ onto $\L^p(\cal{M},\varphi_2)$, preserving the $\cal{M}$-bimodule structures. It is not hard to see that this isometry is a complete order isomorphism.%, a fact which is of first importance for our study.
 
This independence allows us to consider $\L^p(\cal{M},\varphi)$ as a particular realization of an abstract space $\L^p(\cal{M})$. The $\cal{M}$-bimodule structure and the norm of the space $\L^p(\cal{M})$ are defined unambiguously by those of any of its particular realization, as well as the trace functional of $\L^1(\cal{M})$ and the bilinear products $\L^p(\cal{M}) \times \L^q(\cal{M}) \to \L^r(\cal{M})$ where $\frac 1r=\frac 1p+\frac 1q$, and the Mazur maps $\L^p_+(\cal{M}) \to \L^1_+(\cal{M})$, $h \mapsto h^p$ (and their inverses). An element $h \in \L^1(\cal{M})$ identifies to the linear form $\psi \in \cal{M}_*$ defined by the condition $\psi(x)=\tr(xh)$ where $x \in \cal{M}$.%, and the positive part $\L^p_+(\cal{M})$ may be seen as the cone of $p$-roots $\psi^{\frac{1}{p}}$ of positive elements of $\cal{M}_*$.

%A key fact for our analysis of positive contractions on noncommutative $\L^p$-spaces is  that for every projection $e$ in $\cal{M}$ the linear subspace $e\L^p(\cal{M})e \ov{\mathrm{def}}{=} \{ehe : h \in \L^p(\cal{M})\}$ is completely order isometric to $\L^p(e\cal{M}e)$, the $\L^p$-space of the reduced von Neumann algebra $e\cal{M}e$. %This fact is not evident from a given realization of $\L^p(\cal{M},\varphi)$, since the restriction $\varphi_e$ of $\varphi$ to $e\cal{M}e$ may not be semifinite, or the crossed product $\R \rtimes_{\sigma^{\varphi_e}} e\cal{M}e$ not be a reduction of $\R \rtimes_{\sigma^{\varphi}} \cal{M}$.

\paragraph{Case of a normal faithful linear form} If $\varphi$ is a normal faithful linear form on the von Neumann algebra $\cal{M}$ then by \cite[(1.13)]{HJX10} the density operator $D_\varphi$ belongs to the space $\L^1(\cal{M},\varphi)$ and 
\begin{equation}
\label{HJX-1.13}
\varphi(x)
=\tr(D_\varphi x)
=\tr(xD_\varphi ), \quad x \in \cal{M}.
\end{equation}

\paragraph{Reduced noncommutative $\L^p$-spaces} Recall that the centralizer \cite[p.~38]{Str81} of a normal semifinite faithful weight $\varphi$ is the von Neumann subalgebra $\cal{M}^\varphi \ov{\mathrm{def}}{=}\{x \in \cal{M}: \sigma_t^\varphi(x)=x\text{ for all } t \in \R\}$. If $x \in \cal{M}$, we have by \cite[(2) p.~39]{Str81}
\begin{equation}
\label{Charac-centralizer}
x \in \cal{M}^\varphi
\iff x\mathfrak{m}_{\varphi} \subset \mathfrak{m}_{\varphi},\, \mathfrak{m}_{\varphi}x \subset \mathfrak{m}_{\varphi} \text{ and } \varphi(xy)=\varphi(yx) \text{ for any } y \in \mathfrak{m}_{\varphi}. 
\end{equation}
If $e$ belongs to the centralizer of $\varphi$, it is well-known that we can identify the space $\L^p(e\cal{M}e)$ with the subspace $e\L^p(\cal{M})e$ of the noncommutative $\L^p$-space $\L^p(\cal{M})$, see \cite[Lemma 4.3]{GoLa09} and \cite[p.~508]{Wat88}. Let us give some details.

%Let $\varphi$ be a faithful normal semifinite weight on a von Neumann algebra $\cal{M}$ on a
%Hilbert space $H$. 

For each projection $e$ in the centralizer $\cal{M}^\varphi$, let $\varphi_e$ be the restriction of $\varphi$ to the reduced von Neumann algebra $e\cal{M}e$. The weight $\varphi_e$ is still semifinite. From the KMS-condition, it is easy to see that $\sigma^{\varphi_e} = \sigma^\varphi |_{e\cal{M}e}$, and it follows that $e\cal{M}e \rtimes_{\sigma^{\varphi_e}} \R$ coincides with $\bar e(\cal{M} \rtimes_{\sigma^{\varphi}} \R) \bar e$, where $\bar e$ is the canonical image of $e$ in $\tilde{\cal{M}}$. Actually, we have $\bar e=e \ot \Id_{\B(\L^2(\R))}$. From \eqref{Operator-valued} and \eqref{Def-poids-dual} it is clear that  the dual weights to $\varphi$ and $\varphi_e$ are linked by the equation
$$
\widehat{\varphi_e} 
=(\widehat \varphi)_{\bar e}.
$$
Let  $\tau=\tau_\varphi$ be the canonical trace on the crossed product $\tilde{\cal{M}} = \cal{M} \rtimes_{\sigma^{\varphi}} \R $, and $h_\varphi= {\frac{\d}{\d \tau}}\widehat\varphi$. Note that $h_\varphi^{\i t}=\lambda_t$ commutes with $\bar e$, and  so does $h_\varphi^{\frac{1}{2}}$. Moreover, $eh_\varphi$ is a positive selfadjoint operator affiliated with $\bar e\cal{M} \bar e$. Then for any $x \in \bar e\tilde{\cal{M}}\bar e$, we have
$$
\widehat\varphi(x)
=\tau\big(h_\varphi^{\frac{1}{2}}xh_\varphi^{\frac{1}{2}}\big) 
= \tau\big(h_\varphi^{1/2}\bar ex\bar e h_\varphi^{\frac{1}{2}}\big)
= \tau_{\bar e}\big((\bar eh_\varphi)^{\frac{1}{2}}x(\bar e h_\varphi)^{\frac{1}{2}}\big)
$$
where $\tau_{\bar e}$ is the reduced (normal, semifinite) trace on the reduced von Neumann algebra $\bar e\tilde{\cal{M}}\bar e$. Hence by \cite[Theorem 4.10]{Str81}
$$
(\D\widehat{\varphi_e}:\D\tau_{\bar e})_t 
=(\bar e h_\varphi)^{\i t}=\bar e h_\varphi^{\i t}
= \bar e\lambda_t.
$$
Since $\bar e\lambda_t$ is the translation operator on $\bar e\tilde{\cal{M}}\bar e$ it follows by uniqueness that $\tau_{\bar e}$ coincides with the canonical trace on the crossed product $e\cal{M}e \rtimes_{\sigma^{\varphi_e}} \R$. Then it becomes clear that the $*$-algebra of $\tau_{\bar e}$-measurable operators on $\bar e \cal{M}\bar e$ is realized as an $\L^0$-closed $*$-subalgebra of $\L^0(\tilde{\cal{M}},\tau)$, namely $e\L^0(\tilde{\cal{M}},\tau)e$, either abstractly by completing the respective von Neumann algebras in their $\L^0$-topologies, or concretely by representing them as $*$-subalgebras of (unbounded) closed operators on the Hilbert spaces $\L^2(\R, eH)$ and $\L^2(\R, H)$ respectively if $\cal{M}$ is given as a von Neumann subalgebra of some $\B(H)$. Indeed, a (closed, unbounded) operator with dense domain $a$ on $\L^2(\R,eH)$ may be trivially extended to an operator $\tilde{a}$ on $\L^2(\R, H)$ by setting
$$
\tilde{a}\xi 
=ae\xi, \quad  \xi \in \dom \tilde a
=\dom a \oplus \L^2(\R,(1-e)H).
$$ 
Since the dual action of $\sigma^{\varphi_e}$ is the restriction of that of $\sigma^\varphi$ to the von Neumann algebra $e(\cal{M} \rtimes_{\sigma^{\varphi}} \R) e$, it is not difficult to show that the mapping $a \to ebe$ gives a topological $*$-isomorphism between $\L^0(\bar e\tilde{\cal{M}} \bar e,\tau_e)$ and $\bar e\L^0(\tilde{\cal{M}},\tau)\bar e$, sending the space $\L^p(e\cal{M}e)$ onto the space $e\L^p(\cal{M})e$ for every $1 \leq p \leq \infty$. 

If $\psi \in (e\cal{M}e)_*^+$ is a normal positive bounded linear form on the von Neumann algebra $e\cal{M}e$, we may consider its natural extension $i_e\psi$ to the von Neumann algebra $\cal{M}$ defined by $i_e\psi(x)=\psi(exe)$ where $x \in \cal{M}$. Using \eqref{Operator-valued} and \eqref{Def-poids-dual} it is clear that
$$
\widehat{i_e\psi\,}(x)
=\widehat \psi(\bar e x\bar e), \quad x \in \cal{M}.
$$
Since $h_\psi$ belongs to the space $\bar e \L^0(\cal{M},\tau)\bar e$, we have  
$$
\hat\psi(\bar e x\bar e) 
= \tau_{\bar e}\big(h_\psi^{\frac{1}{2}} \bar e x\bar e h_\psi^{\frac{1}{2}}\big) 
= \tau\big(\bar e (h_\psi^{\frac{1}{2}} \bar e x\bar e h_\psi^{\frac{1}{2}} \bar e)\big)
= \tau\big(h_\psi^{\frac{1}{2}}xh_\psi\big)^{\frac{1}{2}}.
$$
It is then clear by uniqueness of the Radon-Nikodym derivative that  $
h_{i_e\psi}
= h_\psi$. Since
$$
\tr_\varphi h_{i_e \psi}
= i_e\psi(1)
=\psi(e)
=\tr_{\varphi_e}h_\psi,
$$
we have the following lemma.

\begin{lemma}
\label{Lemme-trace coincide}
The Haagerup trace $\tr_\varphi$ restricts to $\tr_{\varphi_e}$  on $\L^1(e\cal{M}e)$. 
\end{lemma}
It follows at once that the inclusions $\L^p(e\cal{M}e)\subset \L^p(\cal{M})$ are isometric

\paragraph{Construction of suitable weight} Consider a projection $e$ of the von Neumann algebra $\cal{M}$. Let us construct a normal semifinite faithful weight on $\cal{M}$ with centralizer containing $e$. Choosing %with \cite[10.10]{Str81} \cite[Exercise 7.6.46]{KaR97} 
two normal semifinite faithful weights $\varphi_1$ and $\varphi_2$ on $e\cal{M}e$ and $e^\perp \cal{M} e^\perp$. We can define a normal semifinite faithful weight $\varphi$ on $\cal{M}$ by 
\begin{equation}
\label{Extension-weight1}
\varphi(x)
\ov{\mathrm{def}}{=} \varphi_1(exe)+\varphi_2(e^\perp x e^\perp), \quad x \in \cal{M}_+
\end{equation}
With \eqref{Charac-centralizer}, it is easy to check that $e$ belongs to the centralizer $\cal{M}^\varphi$ of $\varphi$ and we have $\varphi_e=\varphi_1$. 

%Let us construct a normal faithful positive linear form with centralizer containing $e$. 
%%\cite[10.10]{Str81} \cite[Exercise 7.6.46]{KaR97b} 
%Consider two normal faithful positive linear forms $\varphi_1$ and $\varphi_2$ on $e\cal{M}e$ and $e^\perp \cal{M} e^\perp$. By \cite[p.~155]{RaX03}, we can define a normal faithful positive linear form $\phi$ on $\cal{M}$ by 
%\begin{equation}
%\label{Extension-weight1}
%\phi(x)
%\ov{\mathrm{def}}{=} \varphi_1(exe)+\varphi_2(e^\perp x e^\perp), \quad x \in \cal{M}_+.
%\end{equation}
%Moreover, $e$ belongs to the centralizer of $\phi$ by \eqref{Charac-centralizer} and we have $\phi_e=\varphi_1$.

\paragraph{Some complements}
We will use the following classical lemma. We add the uniqueness part in part 1. 
\begin{lemma}
\label{Lemma-DP}
Let $\cal{M}$ be a (semifinite) von Neumann algebra equipped with a  normal semifinite faithful trace $\tau$. 
\begin{enumerate}
	\item Let $a,b \in \L^0(\cal{M},\tau)^+$ satisfy $a \leq b$. Then there exists a unique $x \in \mathcal{M}$ such that $a^{\frac 12} = x b^{\frac 12}$ and $xs(b)=x$. Moreover, we have necessarily $\norm{x}_\infty \leq 1$.

\item If $a \leq b$ in the space $\L^0(\cal{M},\tau)_\sa$ and if $x \in \L^0(\cal{M},\tau)$ then $x^*ax \leq x^*bx$.
\end{enumerate}
\end{lemma}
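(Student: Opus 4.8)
The plan is to establish part 2 first, since it is elementary and is also convenient for part 1. Suppose $a\leq b$ in $\L^0(\mathcal{M},\tau)_\sa$ and put $c:=(b-a)^{\frac{1}{2}}\in\L^0(\mathcal{M},\tau)_+$, so that $c^*c=c^2=b-a$. For any $x\in\L^0(\mathcal{M},\tau)$, using associativity and distributivity of the strong product of $\tau$-measurable operators, $x^*bx-x^*ax=x^*(b-a)x=x^*c^*cx=(cx)^*(cx)\geq0$, which is exactly part 2.

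For part 1, I would prove the $\tau$-measurable version of Douglas's factorization lemma. By \eqref{Ine-measurable}, the assumption $a\leq b$ means precisely that $\dom b^{\frac{1}{2}}\subseteq\dom a^{\frac{1}{2}}$ and $\norm{a^{\frac{1}{2}}\xi}_H\leq\norm{b^{\frac{1}{2}}\xi}_H$ for every $\xi\in\dom b^{\frac{1}{2}}$; in particular $b^{\frac{1}{2}}\xi=0$ implies $a^{\frac{1}{2}}\xi=0$, so $s(a)\leq s(b)$. Hence the assignment $b^{\frac{1}{2}}\xi\mapsto a^{\frac{1}{2}}\xi$ is a well defined linear contraction on $\Ran b^{\frac{1}{2}}$; I extend it by continuity to $\ovl{\Ran b^{\frac{1}{2}}}=s(b)H$ and by $0$ on $(1-s(b))H$, obtaining a bounded operator $x$ on $H$ with $\norm{x}_\infty\leq1$ and $xs(b)=x$. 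Since $a^{\frac{1}{2}}$ and $b^{\frac{1}{2}}$ are affiliated with $\mathcal{M}$, every unitary $u'$ of $\mathcal{M}'$ commutes with $a^{\frac{1}{2}}$, $b^{\frac{1}{2}}$ and $s(b)$, hence leaves $\Ran b^{\frac{1}{2}}$ and $(1-s(b))H$ invariant and commutes with $x$; thus $x\in\mathcal{M}''=\mathcal{M}$. By construction $xb^{\frac{1}{2}}\xi=a^{\frac{1}{2}}\xi$ for all $\xi\in\dom b^{\frac{1}{2}}$, and since $\dom b^{\frac{1}{2}}$ is a core for $a^{\frac{1}{2}}$ whenever $0\leq a\leq b$, passing to closures gives $a^{\frac{1}{2}}=xb^{\frac{1}{2}}$ in $\L^0(\mathcal{M},\tau)$.

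Uniqueness is then immediate from \eqref{support-properties}: if $x'\in\mathcal{M}$ also satisfies $a^{\frac{1}{2}}=x'b^{\frac{1}{2}}$ and $x's(b)=x'$, then $(x-x')b^{\frac{1}{2}}=0$, hence $(x-x')s_l(b^{\frac{1}{2}})=0$; since $b^{\frac{1}{2}}\geq0$ we have $s_l(b^{\frac{1}{2}})=s(b)$, so $x=xs(b)=x's(b)=x'$. The step I expect to be the only genuinely delicate one is identifying the strong product $xb^{\frac{1}{2}}$ in $\L^0(\mathcal{M},\tau)$ — which a priori is merely the closure of $a^{\frac{1}{2}}|_{\dom b^{\frac{1}{2}}}$ — with $a^{\frac{1}{2}}$ itself, i.e.\ knowing that $\dom b^{\frac{1}{2}}$ is a core for $a^{\frac{1}{2}}$. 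One can sidestep this by truncating with $e_n:=\mathbf{1}_{[1/n,n]}(b)$: then $b^{-\frac{1}{2}}e_n\in\mathcal{M}$ and, setting $x_n:=a^{\frac{1}{2}}b^{-\frac{1}{2}}e_n$, part 2 applied to $b^{-\frac{1}{2}}e_n\in\L^0(\mathcal{M},\tau)$ and $a\leq b$ gives $x_n^*x_n=e_nb^{-\frac{1}{2}}ab^{-\frac{1}{2}}e_n\leq e_nb^{-\frac{1}{2}}bb^{-\frac{1}{2}}e_n=e_n\leq1$, so $x_n\in\mathcal{M}$ with $\norm{x_n}_\infty\leq1$; then $x_nb^{\frac{1}{2}}=a^{\frac{1}{2}}e_n\to a^{\frac{1}{2}}$ and $x_n\to x$ (strongly, with $\sup_n\norm{x_n}_\infty\leq1$) in $\L^0(\mathcal{M},\tau)$, and comparing limits yields $xb^{\frac{1}{2}}=a^{\frac{1}{2}}$. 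Alternatively one simply cites the measurable-operator form of Douglas's lemma. The remaining conclusions ($\norm{x}_\infty\leq1$ and $xs(b)=x$) are built into the construction.
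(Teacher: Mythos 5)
Your argument is correct in substance, but it takes a genuinely different route from the paper's. The paper only writes out the uniqueness part of 1 (essentially your observation: such an $x$ must vanish on $(1-s(b))H$ and is determined on $s(b)H=\ovl{\Ran b^{\frac12}}$, which you phrase via \eqref{support-properties}), and it delegates existence, together with the relation $xs(b)=x$, to the reference \cite[Remark 2.3]{DeJ1}; part 2 is treated as a known easy fact and not proved. You instead give a self-contained treatment: part 2 by writing $b-a=c^2$ with $c=(b-a)^{\frac12}$ and using the $*$-algebra structure of $\L^0(\mathcal{M},\tau)$, and part 1 by the Douglas-type construction (define the contraction on $\Ran b^{\frac12}$ via \eqref{Ine-measurable}, extend by $0$ on $(1-s(b))H$, and check commutation with $\mathcal{M}'$ to get $x\in\mathcal{M}$). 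What the paper's approach buys is brevity; what yours buys is a complete proof inside the measurable-operator framework, at the cost of the one delicate identification $xb^{\frac12}=a^{\frac12}$ in $\L^0(\mathcal{M},\tau)$, which you correctly isolate.

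Two points there deserve sharpening. First, the parenthetical claim that $\dom b^{\frac12}$ is a core for $a^{\frac12}$ ``whenever $0\leq a\leq b$'' is not a general fact about positive self-adjoint operators: restricting the form of $a$ to a subspace of its form domain that is form-closed and dense in $H$ but not form-dense produces a $b\geq a$ violating it. What rescues the claim here is precisely $\tau$-measurability: the strong product $xb^{\frac12}$ belongs to $\L^0(\mathcal{M},\tau)$, it is a closed restriction of $a^{\frac12}$, and a $\tau$-measurable operator admits no proper $\tau$-measurable extension, whence $xb^{\frac12}=a^{\frac12}$ (and, a posteriori, the core property). Second, in the truncation bypass the convergences should be taken in the measure topology and justified: writing $s(b)-e_n=\mathbf{1}_{(0,1/n)}(b)+\mathbf{1}_{(n,\infty)}(b)$, the contribution of the first piece to $a^{\frac12}(s(b)-e_n)$ and to $xb^{\frac12}(s(b)-e_n)$ is bounded in operator norm by $n^{-\frac12}$ (using $a\leq b$), while the second piece has right support of trace tending to $0$ by $\tau$-measurability of $b$; this gives $a^{\frac12}e_n\to a^{\frac12}$ and $x_nb^{\frac12}=xb^{\frac12}e_n\to xb^{\frac12}$ in $\L^0(\mathcal{M},\tau)$. (Strong operator convergence of $x_n$ to $x$ with $\sup_n\norm{x_n}_\infty\leq 1$ is not by itself convergence in $\L^0$ and is not the right thing to invoke.) With these justifications, both of your closing arguments are sound.
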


\begin{proof}
1. Suppose that the von Neumann algebra $\mathcal{M}$ acts on a Hilbert $H$. If $x$ satisfies the properties then $x$ is obviously null on the subspace $(1-s(b))H$. Moreover, the restriction of $x$ to the subspace $s(b)H$ is given by $x|_{s(b)H}=a^{\frac 12}b^{-\frac 12}$ where $b^{-\frac 12} \co s(b)H \to s(b)H$. We conclude that $x$ is uniquely determined. The proof of the existence is given in \cite[Remark 2.3]{DeJ04} and shows the relation $xs(b)=x$.
\end{proof}

The second lemma is an easy observation.

\begin{lemma}
\label{lemma2-GL}
Let $\cal{M}$ be a von Neumann algebra and  $1 \leq p <\infty$. Let $h$ be a positive element of the space $\L^p(\cal{M})$.
\begin{enumerate}
	\item The linear map $s(h)\cal{M}s(h) \to \L^p(\cal{M})$, $x \mapsto h^{\frac{1}{2}} x h^{\frac{1}{2}}$ is injective.
	\item Suppose that $1 \leq p <\infty$. The subspace $h^{\frac{1}{2}} \cal{M} h^{\frac{1}{2}}$ is dense in the space $s(h)\L^p(\cal{M})s(h)$ for the topology of $\L^p(\cal{M})$.
\end{enumerate}
\end{lemma}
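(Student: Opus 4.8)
The plan is to derive both assertions from the elementary support calculus \eqref{support-properties} in $\L^0(\mathcal{M},\tau)$, together with the duality facts recalled above. Throughout, note that since $h$ is a positive element of $\L^p(M)$, its square root $h^{\frac12}$ is a positive element of $\L^{2p}(M)$ with $s_l(h^{\frac12})=s_r(h^{\frac12})=s(h^{\frac12})=s(h)$, and $h^{\frac12}xh^{\frac12}\in\L^p(M)$ for $x\in M$ because $\frac{1}{2p}+\frac{1}{2p}=\frac1p$.

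For part 1, suppose $x\in s(h)Ms(h)$ satisfies $h^{\frac12}xh^{\frac12}=0$. Applying \eqref{support-properties} to the factorization $h^{\frac12}\cdot\big(xh^{\frac12}\big)=0$ gives $s_r(h^{\frac12})\,xh^{\frac12}=s(h)\,xh^{\frac12}=0$; since $s(h)x=x$ this yields $xh^{\frac12}=0$. Applying \eqref{support-properties} again, now to $x\cdot h^{\frac12}=0$, gives $x\,s_l(h^{\frac12})=x\,s(h)=0$, and since $x\,s(h)=x$ we conclude $x=0$. I expect no difficulty here.

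For part 2, observe first that $h^{\frac12}Mh^{\frac12}=h^{\frac12}s(h)Ms(h)h^{\frac12}$ is contained in the closed subspace $s(h)\L^p(M)s(h)$ (the range of the contractive idempotent $z\mapsto s(h)zs(h)$ on $\L^p(M)$). To prove it is dense there, I would argue by Hahn--Banach: the trace pairing identifies the dual of $s(h)\L^p(M)s(h)$ with $s(h)\L^{p^*}(M)s(h)$ when $p>1$ and with $s(h)Ms(h)$ when $p=1$ (this is the duality-preserving character of the inclusions $\L^q(eMe)\subset\L^q(M)$ recalled above). So it suffices to show that any $k$ in that space with $\tr\big(h^{\frac12}xh^{\frac12}k\big)=0$ for all $x\in M$ vanishes. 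By the tracial property — the Hölder exponents being arranged so that $h^{\frac12}\in\L^{2p}(M)$, $xh^{\frac12}k\in\L^{(2p)^*}(M)$, hence the product lies in $\L^1(M)$ — one has $\tr\big(h^{\frac12}xh^{\frac12}k\big)=\tr\big(x\,h^{\frac12}kh^{\frac12}\big)$ with $h^{\frac12}kh^{\frac12}\in\L^1(M)$; vanishing for all $x\in M$ and nondegeneracy of the pairing $M\times\L^1(M)\to\C$ force $h^{\frac12}kh^{\frac12}=0$. Then, since $k=s(h)ks(h)$, the same two applications of \eqref{support-properties} as in part 1 (with $k$ in place of $x$) give $k=0$, completing the proof.

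The only mildly delicate point is the bookkeeping: correctly identifying the dual of the reduced subspace (notably in the case $p=1$) and checking the exponents so that $\tr(ab)=\tr(ba)$ applies — both are covered by the preliminary section. An equivalent route avoiding duality for the subspace itself is to note that $\overline{h^{\frac12}M}=s(h)\L^{2p}(M)$ and $\overline{Mh^{\frac12}}=\L^{2p}(M)s(h)$ (the same support argument in $\L^{2p}$), factor an arbitrary $k\in s(h)\L^p(M)s(h)$ through its polar decomposition as $k=ab$ with $a,b\in s(h)\L^{2p}(M)s(h)$, approximate $a\approx h^{\frac12}x$ and $b\approx yh^{\frac12}$, and pass to the limit $k\approx h^{\frac12}(xy)h^{\frac12}$ via Hölder's inequality.
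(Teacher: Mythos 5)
Your proposal is correct and follows essentially the same route as the paper: part 1 is the same double application of the support calculus \eqref{support-properties}, and part 2 is the same Hahn--Banach/trace-duality argument, the only cosmetic difference being that the paper takes the annihilator of $h^{\frac12}Mh^{\frac12}$ inside $\L^{p^*}(M)$ (the dual of the ambient space) and shows it annihilates $s(h)\L^p(M)s(h)$, whereas you phrase it on the dual of the reduced space $s(h)\L^{p^*}(M)s(h)$ (resp. $s(h)Ms(h)$ for $p=1$), which the preliminaries justify. The polar-decomposition variant you sketch at the end is a reasonable alternative but is not needed.
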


\begin{proof}
1) If $h^{\frac{1}{2}} x h^{\frac{1}{2}}=0$ then $s(h)xs(h)=0$ by using \eqref{support-properties} twice. Since $x \in s(h)\cal{M}s(h)$, we conclude that $x=s(h)xs(h)=0$.

2) Suppose that $y \in \L^{p^*}(\cal{M})$ belongs to the annihilator $\big(h^{\frac{1}{2}} \cal{M} h^{\frac{1}{2}}\big)^\perp$ of the subspace $h^{\frac{1}{2}} \cal{M} h^{\frac{1}{2}}$ of $\L^p(\cal{M})$. For any $x \in \cal{M}$, we have
$\tr\big(h^{\frac{1}{2}} xh^{\frac{1}{2}}y\big)=0$, i.e.~$\tr\big(xh^{\frac{1}{2}}yh^{\frac{1}{2}}\big)=0$. Since $h^{\frac{1}{2}}yh^{\frac{1}{2}} \in \L^1(\cal{M})$, we deduce that $h^{\frac{1}{2}}yh^{\frac{1}{2}}=0$. We infer that $s(h)ys(h)=0$. It follows immediately that $y$ belongs to the annihilator $\big(s(h)\L^p(\cal{M})s(h)\big)^\perp$. The proof is complete.
\end{proof}

The following is a variant of \cite[Lemma 2.2 (d)]{Sch86}. Our approach is probably more transparent. See \cite{PeT72} and \cite[Proposition 3.1]{Kos81} for related facts.

\begin{lemma}
\label{lemma-DP-bis}
Let $\cal{M}$ be a von Neumann algebra. Suppose that $1 \leq p < \infty$. If $a,b \in \L^p(\cal{M})_+$ satisfy $a \leq b$ then there exists a unique $z \in s(b)\cal{M}s(b)$ such that $a=b^{\frac{1}{2}}zb^{\frac{1}{2}}$. Moreover, we have $0 \leq z \leq s(b)$.
\end{lemma}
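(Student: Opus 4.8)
The plan is to realize $\L^p(M)$ concretely as a Haagerup space $\L^p(M,\varphi)$ sitting inside $\L^0(\mathcal{M},\tau)$, with $\mathcal{M}=M \rtimes_{\sigma^\varphi}\R$ semifinite, and then to feed the hypothesis $a \leq b$ directly into Lemma \ref{Lemma-DP}(1). Viewing $a$ and $b$ as positive $\tau$-measurable operators with $a \leq b$, that lemma produces a unique $x \in \mathcal{M}$ such that $a^{\frac{1}{2}} = x b^{\frac{1}{2}}$, $x s(b)=x$ and $\norm{x}_\infty \leq 1$. The natural candidate is $z \ov{\mathrm{def}}{=} x^*x$. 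Since $a^{\frac{1}{2}}$ and $b^{\frac{1}{2}}$ are self-adjoint, taking adjoints in $a^{\frac{1}{2}}=xb^{\frac{1}{2}}$ gives $a^{\frac{1}{2}}=b^{\frac{1}{2}}x^*$, hence $a=a^{\frac{1}{2}}a^{\frac{1}{2}}=b^{\frac{1}{2}}x^*xb^{\frac{1}{2}}=b^{\frac{1}{2}}zb^{\frac{1}{2}}$, all these products being legitimate in $\L^0(\mathcal{M},\tau)$ since $a^{\frac{1}{2}},b^{\frac{1}{2}}\in\L^{2p}(M)$ and $x\in\mathcal M$. Moreover $z \geq 0$ with $\norm{z}_\infty=\norm{x}_\infty^2 \leq 1$, and the relations $xs(b)=x$ and its adjoint $s(b)x^*=x^*$ yield $s(b)zs(b)=z$; hence $z \in s(b)Ms(b)$ and $0 \leq z \leq s(b)$, \emph{provided} we know that $z$ (equivalently $x$) lies in $M$.

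Establishing $x\in M$ is the one step requiring care, and I expect it to be the only real obstacle. Here I would exploit the homogeneity defining the Haagerup spaces: $b\in\L^p(M)$ forces $b^{\frac{1}{2}}\in\L^{2p}(M)$, i.e. $\widehat{\sigma}_s(b^{\frac{1}{2}})=\e^{-\frac{s}{2p}}b^{\frac{1}{2}}$, and similarly $\widehat{\sigma}_s(a^{\frac{1}{2}})=\e^{-\frac{s}{2p}}a^{\frac{1}{2}}$ by \eqref{Def-Haagerup}. Applying the $*$-automorphism $\widehat{\sigma}_s$ of $\L^0(\mathcal{M},\tau)$ to $a^{\frac{1}{2}}=xb^{\frac{1}{2}}$ and cancelling the common scalar $\e^{-\frac{s}{2p}}$ gives $a^{\frac{1}{2}}=\widehat{\sigma}_s(x)b^{\frac{1}{2}}$. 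Since $\widehat{\sigma}_s(s(b))=s(\widehat{\sigma}_s(b))=s(b)$, we also have $\widehat{\sigma}_s(x)s(b)=\widehat{\sigma}_s(x)$ and $\norm{\widehat{\sigma}_s(x)}_\infty \leq 1$. The uniqueness clause of Lemma \ref{Lemma-DP}(1) then forces $\widehat{\sigma}_s(x)=x$ for all $s\in\R$, so $x\in\pi(M)=M$ by \eqref{carac-Pi-de-M}, whence $z=x^*x\in s(b)Ms(b)$ as required. Everything else is formal computation in $\L^0(\mathcal{M},\tau)$.

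Uniqueness of $z$ is then immediate from Lemma \ref{lemma2-GL}(1) applied with $h=b$: the map $s(b)Ms(b) \to \L^p(M)$, $w \mapsto b^{\frac{1}{2}}wb^{\frac{1}{2}}$, is injective, so any two solutions lying in $s(b)Ms(b)$ must agree. Finally, since all of the above constructions are compatible with the canonical identifications \eqref{kappa} between different realizations of $\L^p(M)$, the statement does not depend on the auxiliary weight $\varphi$.
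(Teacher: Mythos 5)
Your proposal is correct and follows essentially the same route as the paper: apply Lemma \ref{Lemma-DP}(1) in $\L^0(\mathcal{M},\tau)$, use the homogeneity \eqref{Def-Haagerup} together with the uniqueness clause to force $\widehat{\sigma}_s(x)=x$ and hence $x\in M$ via \eqref{carac-Pi-de-M}, set $z=x^*x$ (which, by $xs(b)=x$, coincides with the paper's $s(b)x^*xs(b)$), and get uniqueness from Lemma \ref{lemma2-GL}(1). The only cosmetic difference is that you justify $\widehat{\sigma}_s(x)s(b)=\widehat{\sigma}_s(x)$ via $\widehat{\sigma}_s(s(b))=s(\widehat{\sigma}_s(b))=s(b)$, while the paper uses that $s(b)\in M$ is fixed by the dual action; both are fine.
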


\begin{proof}
Let $\L^p(\cal{M},\varphi)$ be a realization of $\L^p(\cal{M})$ in $\tilde{\cal{M}}= \cal{M} \rtimes_{\sigma^\varphi} \R$. By Lemma \ref{Lemma-DP}, there exists a unique $x \in \tilde{\cal{M}}$ such that $a^{\frac 12} = x b^{\frac 12}$ and $xs(b)=x$.  Note that $a^{\frac 12},b^{\frac 12} \in \L^{2p}(\cal{M})$. Then for any $s \in \R$
$$
\e^{-\frac s{2p}}a^{\frac 12}
\ov{\eqref{Def-Haagerup}}{=} \hat{\sigma}_s\big(a^{\frac 12}\big)
=\hat{\sigma}_s\big(x b^{\frac 12}\big)
=\hat{\sigma}_s(x)\hat\sigma_s\big( b^{\frac 12}\big)
\ov{\eqref{Def-Haagerup}}{=} \hat{\sigma}_s(x)\e^{-\frac s{2p}}b^{\frac 12}.
$$
Hence $a^{\frac 12}= \hat\sigma_s(x)b^{\frac 12}$. On the other hand, since $x=xs(b)$ and $s(b)\in \tilde{\cal{M}}$, we have
$$
\hat{\sigma}_s(x)
=\hat{\sigma}_s(xs(b))
=\hat{\sigma}_s(x)\hat{\sigma}_s(s(b))
\ov{\eqref{Def-Haagerup}}{=} \hat{\sigma}_s(x)s(b).
$$
Thus by uniqueness of $x$ we get $\hat{\sigma}_s(x)=x$ for every $s \in \R$. That means that $x \in \cal{M}$ by \eqref{carac-Pi-de-M}. By Lemma \ref{Lemma-DP}, we deduce that $\norm{x} \leq 1$.

From $a^{\frac 12} = x b^{\frac 12}$ and $\norm{x}_\infty \leq 1$ it follows readily that $a=b^{\frac 12}x^*xb^{\frac 12}$ and that $0 \leq x^*x \leq \norm{x^*x}_\infty=\norm{x}_\infty^2 \leq 1$ where we use \cite[1.6.9]{Dix77} in the second inequality. If we set $z \ov{\mathrm{def}}{=} s(b)x^*xs(b)$ then we have $z \in s(b)\cal{M}s(b)$. Moreover, since $s(b^{\frac 12})=s(b)$, we have $b^{\frac 12}zb^{\frac 12}=  b^{\frac 12}s(b)x^*xs(b)b^{\frac 12}=b^{\frac 12}x^*xb^{\frac 12}=a$ and $0 \leq z \leq s(b)$. 

The uniqueness of the element $x$ follows from Lemma \ref{lemma2-GL}. 
\end{proof}

%%%%%%%%%%%%%%%%%%%%%%%%%%%%%%%%%%%%%%%%%%%%%%%%%%%%%%%%%%%%%%%%%%%%%%%%%%%%%%%%%%%%%%%%%%%%%%%%%
\section{Conditional expectations on noncommutative $\L^p$-spaces}
\label{Sec-conditional}

\paragraph{Conditional expectations on von Neumann algebras} Recall that a positive map $T \co A \to A$ on a $\mathrm{C}^*$-algebra $A$ is said faithful if $T(x)=0$ for some $x \in A_+$ implies $x=0$. 

Let $B$ be a $\mathrm{C}^*$-subalgebra of a $\mathrm{C}^*$-algebra $A$. A map $\E \co A \to A$ is called a conditional expectation on $B$ if it is a positive projection \cite[p.~116]{Str81} of range $B$ which is $B$-bimodular, that is 
$$
\E(xyz)
=x\E(y)z, \quad  y \in A, x,z \in B,
$$
Such a map is completely positive \cite[Proposition p.~118]{Str81}.

Now, suppose that $A=\cal{M}$ is a von Neumann algebra, that $B=\cal{N}$ is a von Neumann subalgebra and that there exists a faithful normal conditional expectation $\E \co \cal{M} \to \cal{M}$ from $\cal{M}$ onto $\cal{N}$, and that $\psi$ is a normal semifinite faithful weight on $\cal{N}$. Then by \cite[Lemme 1.4.3]{Con73}, $\varphi \ov{\mathrm{def}}{=} \psi \circ \E$ is a normal semifinite faithful weight on $\cal{M}$ and the automorphisms groups of the two weights are linked by the relation
\begin{equation}
\label{eq:E-sigma}
\E \circ \sigma_t^\varphi
=\sigma_t^\psi \circ \E, \quad t \in \R.
\end{equation}
This implies that the von Neumann algebra $\cal{N}$ is invariant under $\sigma^\varphi$, i.e.~$\sigma_t^\varphi(\cal{N}) \subset \cal{N}$ for all $t \in \R$, and that $\sigma_t^\psi \co \cal{N} \to \cal{N}$ is the restriction of $\sigma_t^\varphi \co \cal{M} \to \cal{M}$ to $\cal{N}$. 

\paragraph{$\L^p$-extensions of positive maps} Let $\cal{M}$ be a von Neumann algebra equipped with a normal faithful positive linear form $\varphi$ with associated density operator $D_{\varphi}$. If $1 \leq p <\infty$, note that by Lemma \ref{lemma2-GL} (see also \cite[Lemma 1.1]{JuX03} and \cite[Corollary 4]{Wat88}), $D_\varphi^{\frac{1}{2p}} \cal{M} D_\varphi^{\frac{1}{2p}}$ is a dense subspace of the space $\L^p(\cal{M})$.  Consider a unital positive map $T \co \cal{M} \to \cal{M}$ and assume that $\psi(T(x)) =\varphi(x)$ for any $x \in \cal{M}_+$. Define
\begin{equation}
\label{Map-extension-Lp}
\begin{array}{cccc}
  T_p  \co &  D_\varphi^{\frac{1}{2p}} \cal{M} D_\varphi^{\frac{1}{2p}}  &  \longrightarrow   &  D_\varphi^{\frac{1}{2p}} \cal{M} D^{\frac{1}{2p}}_{\varphi}  \\
           &   D_\varphi^{\frac{1}{2p}}x D_\varphi^{\frac{1}{2p}}  & \longmapsto &  D_\varphi^{\frac{1}{2p}}T(x) D_\varphi^{\frac{1}{2p}}  \\
\end{array}.
\end{equation}
By \cite[Theorem 5.1]{HJX10}, the map $T_p$ extends to a contractive map from $\L^p(\cal{M})$ into $\L^p(\cal{M})$. We can use this result for obtaining $\L^p$-extension of conditional expectations.

\paragraph{$\L^p$-extensions of conditional expectations in the general case}
Now, we will describe an approach for the general case. We use the notations in the first paragraph of this section. Note the crossed product $\tilde{\cal{N}} \ov{\mathrm{def}}{=} \cal{N} \rtimes_{\sigma^\varphi} \R$ is a $*$-subalgebra of $\tilde{\cal{M}} \ov{\mathrm{def}}{=} \cal{M} \rtimes_{\sigma^\varphi} \R$, and the natural representation of $\cal{N}$ in $\tilde{\cal{N}}$ is the restriction of that of $\cal{M}$ in $\tilde{\cal{M}}$. It is clear that the dual action $\hat\sigma^\varphi$ of $\R$ on $\cal{M}$ restricts to $\hat\sigma^\phi$ on $\cal{N}$. 

Let now $\hat \E$ be the restriction of $\Id_{\B(\L^2(\R))} \ot \E$ to $\tilde{\cal{M}}$, when $\tilde{\cal{M}}$ is considered as a von Neumann subalgebra of $\B(\L^2(\R)) \otvn \cal{M}$. Note that $\hat\E$ is normal and faithful since $\Id \ot \E$ is normal and faithful. Since $\hat \E$ preserves the operators $\lambda_s$, $s \in \R$ and sends $\pi(\cal{M})$ onto $\pi(\cal{N})$ (due to relation \eqref{eq:E-sigma}) it sends $\tilde{\cal{M}}$ onto $\tilde{\cal{N}}$. The $\tilde{\cal{M}}$-bimodularity of $\hat{\E}$ results from the fact that $\Id_{\B(\L^2(\R))} \ot \E$ is $\B(\L^2(\R)) \otvn \cal{N}$-bimodular. We obtain therefore a normal faithful conditional expectation $\hat{\E}$ from $\tilde{\cal{M}}$ onto $\tilde{\cal{N}}$. 
 
\begin{lemma}
\label{rel:dual-weights}
The dual weights $\hat{\varphi}$, $\hat{\phi}$ on $\tilde{\cal{M}}$, resp. $\tilde{\cal{N}}$ to the weights $\varphi$, $\phi$ satisfy the relation
\begin{equation}
\label{lien-dual-weights}
\hat{\varphi} 
=\hat\phi \circ \hat{\E}.
\end{equation}
\end{lemma}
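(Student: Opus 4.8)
The plan is to deduce \eqref{lien-dual-weights} from Haagerup's description \eqref{Def-poids-dual} of the dual weight in terms of the operator-valued weight \eqref{Operator-valued} that integrates the dual action, exactly in the spirit of the reductions carried out above for $\varphi_e$ and $i_e\psi$. Write $\phi$ for the weight $\psi$ on $N$, so that $\varphi=\phi\circ\E$, and let $T_{\mathcal M}\co\mathcal M^+\to\bar M^+$ and $T_{\N}\co\N^+\to\bar N^+$ be the operator-valued weights attached to the dual actions on $\mathcal M$ and on $\N$, so that $\widehat\varphi=\bar\varphi\circ T_{\mathcal M}$ and $\widehat\phi=\bar\phi\circ T_{\N}$. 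Since $\E$ and $\phi$ are normal, their canonical extensions to the extended positive parts give $\bar\varphi=\overline{\phi\circ\E}=\bar\phi\circ\bar\E$, where $\bar\E\co\bar M^+\to\bar N^+$ is the normal extension of the conditional expectation $\E$. Thus the lemma reduces to the single commutation relation
\[
\bar\E\circ T_{\mathcal M}=T_{\N}\circ\hat\E
\]
between operator-valued weights $\mathcal M^+\to\bar N^+$: granting it, for $x\in\mathcal M^+$ one chains $\widehat\varphi(x)=\bar\varphi(T_{\mathcal M}(x))=\bar\phi(\bar\E(T_{\mathcal M}(x)))=\bar\phi(T_{\N}(\hat\E(x)))=\widehat\phi(\hat\E(x))$, which is the claimed identity.

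To establish the commutation relation I would first check that $\hat\E$ intertwines the two dual actions, $\hat\E\circ\hat\sigma_s=\hat\sigma_s\circ\hat\E$ on $\mathcal M$ for every $s\in\R$. By \eqref{Dual-action}, $\hat\sigma_s$ is implemented on $\mathcal M\subset\B(\L^2(\R))\otvn M$ by conjugation by the unitary $W(s)$, which by its very definition acts on $\L^2(\R,H)=\L^2(\R)\ot H$ as $m_s\ot\Id_H$ for a multiplication operator $m_s\in\B(\L^2(\R))$; hence $W(s)\in\B(\L^2(\R))\otvn\C\Id_H\subset\B(\L^2(\R))\otvn N$. Since $\hat\E$ is the restriction to $\mathcal M$ of the $\B(\L^2(\R))\otvn N$-bimodular map $\Id_{\B(\L^2(\R))}\ot\E$, it commutes with conjugation by $W(s)$, which is the intertwining. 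Next, because the dual action on $\N$ is the restriction of the one on $\mathcal M$, for $y\in\N^+$ the element $T_{\N}(y)=\int_{\R}\hat\sigma_s(y)\,\d s$ is given by the same integral computed inside $\bar M^+$, the value automatically lying in $\bar N^+$. Finally, reading $T_{\mathcal M}(x)=\int_{\R}\hat\sigma_s(x)\,\d s$ as the increasing limit of its partial integrals over compact subintervals and applying $\bar\E$, one uses its additivity on finite sums, its normality (i.e. commutation with increasing nets) and the intertwining just obtained to get $\bar\E(T_{\mathcal M}(x))=\int_{\R}\bar\E(\hat\sigma_s(x))\,\d s=\int_{\R}\hat\sigma_s(\hat\E(x))\,\d s=T_{\N}(\hat\E(x))$.

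The one delicate point is the bookkeeping with objects valued in the extended positive parts: one must make sure that $\bar\E$, the normal extension of the operator-valued weight $\E\co M\to N$, genuinely commutes with the monotone net defining $T_{\mathcal M}(x)$, and that $\overline{\phi\circ\E}=\bar\phi\circ\bar\E$. Both belong to the standard theory of normal operator-valued weights, and in fact the commutation relation is nothing but an instance of the associativity/composition properties of operator-valued weights, the two sides being the unique normal operator-valued weight $\mathcal M\to N$ obtained by ``integrating the dual action fibrewise over $\E$''. Apart from that, everything is formal, once one invokes — as was established just before the statement — that $\N=N\rtimes_{\sigma^\phi}\R$ sits inside $\mathcal M$ with $M$, the representation $\pi$, the translations $\lambda_s$ and the dual action all restricting correctly. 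An alternative route would be to compare the Connes cocycles $(\D\widehat\varphi:\D\tau)_t$ and $(\D(\widehat\phi\circ\hat\E):\D\tau)_t$, but this seems to need more preparation (semifiniteness of $\widehat\phi\circ\hat\E$, a cocycle formula for composed weights) than the direct computation above.
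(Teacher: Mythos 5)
Your proposal is correct and follows essentially the same route as the paper's proof: reduce \eqref{lien-dual-weights} to the intertwining relation $T_\N \circ \hat\E = \bar\E \circ T_\mathcal{M}$ obtained from \eqref{Operator-valued} and the commutation of $\hat\E$ with the dual action, combine it with $\bar\varphi=\overline{\phi\circ\E}=\bar\phi\circ\bar\E$, and chain the equalities. You merely supply details (the $W(s)=m_s\ot\Id_H$ argument and the monotone-limit step) that the paper dismisses as ``easy to see''.
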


\begin{proof}
It is easy to see that each automorphism $\hat \sigma_s$ commutes with $\hat \E$. It follows from \eqref{Operator-valued} that the operator valued weights $T_{\tilde{\cal{M}}}$ and $T_{\tilde{\cal{N}}} $ verify the relation
\begin{equation}
\label{entrelace-operator-weight}
T_{\tilde{\cal{N}}} \circ \hat \E 
=\bar \E \circ  T_{\tilde{\cal{M}}}
\end{equation}
where $\bar \E \co \bar{\cal{M}}^+ \to \bar{\cal{N}}^+$ is the natural extension of the operator $\E \co \cal{M}^+ \to \cal{N}^+$. Let us also denote by $\bar\omega$ the natural extension to $\bar{\cal{M}}_+$ of any normal weight $\omega$ on $\cal{M}$ \cite[Proposition 1.10]{Haa79a}. Then
$$
\hat{\varphi}
\ov{\eqref{Def-poids-dual}}{=} \bar \varphi\circ T_{\tilde{\cal{M}}}
=\ovl{\phi\circ \E}\circ T_{\tilde{\cal{M}}}
=\bar\phi \circ \bar\E \circ T_{\tilde{\cal{M}}}
\ov{\eqref{entrelace-operator-weight}}{=} \bar\phi \circ T_{\tilde{\cal{N}}} \circ \hat \E
\ov{\eqref{Def-poids-dual}}{=} \hat\phi \circ \hat \E.
$$
\end{proof}

Let $\tau_{\tilde{\cal{N}}}$ be the canonical trace on the crossed product $\tilde{\cal{N}}$. Using \cite[Lemme 1.4.4]{Con73} in the second equality, we obtain
$$
\big (\D \hat{\varphi} : \D(\tau_{\tilde{\cal{N}}} \circ \hat \E)\big)_t
\ov{\eqref{lien-dual-weights}}{=} \big (\D(\hat\phi\circ \hat \E) : \D(\tau_{\tilde{\cal{N}}} \circ \hat \E)\big)_t 
=\big(\D\hat{\phi}:\D\tau_{\tilde{\cal{N}}}\big)_t
=\lambda_t=\big(\D\hat{\varphi}:\D\tau_{\tilde{\cal{M}}}\big)_t
, \quad t \in \R. 
$$
By \cite[p.~49]{Str81}, we deduce that $\tau_{{\tilde{\cal{N}}}} \circ \hat \E$ is the canonical trace $\tau_{\tilde{\cal{M}}}$ on the crossed product $\tilde{\cal{M}}$. In particular, $\tau_{\tilde{\cal{N}}}$ is the restriction of $\tau_{\tilde{\cal{M}}}$. From now on, we set $\tau=\tau_{\tilde{\cal{M}}}$, and $\tau_{\tilde{\cal{N}}}=\tau|_{\tilde{\cal{N}}}$.

It follows that the inclusion $\tilde{\cal{N}} \subset \tilde{\cal{M}}$ extends to a topological linear embedding $\L^0(\tilde{\cal{N}},\tau_{\tilde{\cal{N}}}) \subset \L^0(\tilde{\cal{M}},\tau_{\tilde{\cal{M}}})$ (which respects the operations of $*$-algebras). Using the characterization \eqref{Def-Haagerup} of Haagerup $\L^p$-spaces, it is easy to see that for $1 \leq p \leq \infty$,
\begin{equation}
\label{eq:embed-Lp}
\L^p(\cal{N},\phi) 
=\L^p(\cal{M},\varphi) \cap \L^0(\tilde{\cal{N}},\tau).
\end{equation}
Note that the external products and Mazur maps in the scale $(\L^p(\cal{N},\varphi))$ are inherited from the scale $(\L^p(\cal{M},\varphi))$ and that the inclusion $i_p$ of $\L^p(\cal{N},\phi)$ into the space $\L^p(\cal{M},\varphi)$ is necessarily isometric, in virtue of the formula 
$$
\norm{h}_p
= \tau\big(\chi_{[1,+\infty)}(|h|)^p\big)^{\frac{1}{p}}
$$
(see \cite[Ch.~II, Lemma 5]{Terp81} for the case $p=1$). The map $i_p$ is also positive since 
\begin{align*}
\MoveEqLeft
\L^p(\cal{N},\phi)_+ 
= \L^p(\cal{N},\phi) \cap \L^0(\tilde{\cal{N}},\tau)_+ \\
&= \L^p(\cal{M},\varphi) \cap \L^0(\tilde{\cal{N}},\tau)_+ \subset \L^p(\cal{M},\varphi)\cap \L^0(\tilde{\cal{M}},\tau)_+ 
= \L^p(\cal{M},\varphi)_+.
\end{align*}
In fact $i_p$ is isometric and completely positive, since for each integer $n \in \nn$, the linear map $\mathrm{id}_{S_n^p}\ot i_p \co S^p_n(\L^p(\cal{N},\phi)) \to S^p_n(\L^p(\cal{M},\varphi))$ identifies to the inclusion map $\L^p(\M_n(\cal{N}), \phi_n)\subset \L^p(\M_n(\cal{M}),\varphi_n)$
with $\phi_n \ov{\mathrm{def}}{=} \tr_n \ot \phi$, $\varphi_n \ov{\mathrm{def}}{=} \tr_n\ot\varphi$ where $\tr_n$ is the ordinary trace on the algebra $\M_n$ of $n \times n$ matrices.

On the other hand there is a natural isometric linear injection of $\cal{N}_*$ into $\cal{M}_*$, namely the map $\E_* \co \omega \mapsto \omega \circ\E$. It turns out that in the identification of preduals with $\L^1$-spaces, this maps becomes exactly the inclusion map of $\L^1(\cal{N},\phi)$ into $\L^1(\cal{M},\varphi)$, that is
\begin{align}
\label{inclus-L1}
h_\omega^{\phi}
=h_{\omega\circ\E}^\varphi
\end{align}
for every $\omega \in \cal{N}_*$. Indeed, using \cite[Lemme 1.4.4]{Con73} in the third  equality, we have
$$
\big(h_{\omega\circ\E}^\varphi\big)^{\i t} 
\ov{\eqref{Radon-Nikodym-1}}{=} \big(\D\widehat{\omega\circ\E} : \D\tau\big)_t 
=\big(\D(\hat\omega\circ \hat{\E}):\D(\tau|_\N\circ \hat{\E})\big)_t 
=\big(\D\hat\omega: \D\tau|_\N\big)_t 
\ov{\eqref{Radon-Nikodym-1}}{=} (h_\omega^{\phi})^{\i t}.
$$
It follows easily that the functionals $\tr_\varphi$ on $\L^1(\cal{M},\varphi)$ and $\tr_\phi$ on $\L^1(\cal{N},\phi)$ coincide on $\L^1(\cal{N},\phi)$:
\begin{equation}
\label{Traces-coincident}
\tr_\varphi|_{\L^1(\cal{N},\phi)}
=\tr_\phi.
\end{equation}
Similarly the restriction map $R \co \cal{M}_* \to \cal{N}_*, \omega\mapsto \omega\big|_\cal{N}$, identifies to the map $\E_1 \co \L^1(\cal{M},\varphi)\to \L^1(\cal{N},\phi)$ defined by
\[
\E_1(h_\omega^\varphi)
=h_{\omega\mid _N}^\phi
= h_{\omega \circ \E}^\varphi
\]
for every $\omega \in \M_*$, which means that 
\begin{align}\label{E1}
\tr\big(\E_1(h)x\big)= \tr\big(h\E(x)\big), \quad h \in \L^1(\cal{M},\varphi), x \in \cal{M}
 \end{align}
that is, $\E$ is conjugate to $\E_1$ in the duality of the trace functional $\tr$. Note that in particular $\tr=\tr \circ\,\E_1$ (take $x=I$). It is also easy to see that the map $\E_1$ is $\cal{N}$-bimodular, that is 
$$
\E_1(xhy)
=x\E_1(h)y, \quad  x,y \in \cal{N}, h \in \L^1(\cal{M},\varphi).
$$

Now for $1<p<\infty$ and $h \in \L^p(\cal{M},\varphi)$ we define $\E_p(h)$ as the unique element of $\L^p(\cal{N},\phi)$ such that 
\begin{align}
\label{Ep}
\tr\big(\E_p(h)k\big) 
= \tr(hk), \quad k \in \L^{p^*}(\cal{N},\phi)
 \end{align}
where $\frac{1}{p}+\frac{1}{p^*}=1$. That $\E_p(h)$ exists and is unique stems directly from the fact that $\L^p(\cal{N})$ is the conjugate space to $\L^{p^*}(\cal{N})$. It is then easy to see that $\E_p$ is a contractive linear projection in $\L^p(\cal{M},\varphi)$, which is positive (since the positive cone in $\L^{p^*}(\cal{M})$ is polar to that in $\L^p(\cal{M})$, \cite[Ch.~II, Proposition 33]{Terp81}). For any $h \in \L^p(\cal{M},\varphi)$ and any $k \in \L^{p^*}(\cal{M},\varphi)$ we have
$$
\tr\big(\E_p(h)k\big)
=\tr\big(\E_p(h)\E_{p^*}(k)\big)
=\tr\big(h\,\E_{p^*}(k)\big)
$$
thus the conjugate of $\E_p$, viewed as a map from $\L^p(\cal{M},\varphi)$ into itself, is $\E_{p^*}$. Moreover, if $\frac{1}{p}+\frac{1}{q}=\frac{1}{r} \leq 1$, $h \in \L^p(\cal{M},\varphi)$ and $k \in \L^q(\cal{N},\phi)$, it is easily seen that 
\begin{align}
\label{esp-cond-prop}
\E_r(hk)=\E_p(h)k 
\quad \text{and} \quad 
\E_r(kh)=k\E_p(h).
\end{align}
In particular with $q=\infty$ we get that $\E_p$ is $\cal{N}$-bimodular. 

Let us point a more abstract way to characterize $\E_p$. For any $1 \leq p \leq \infty$, let $i_p \co \L^p(\cal{N}) \to \L^p(\cal{M})$ be the embedding associated with the weight $\varphi$ and the $\varphi$-invariant conditional expectation $\E$. Then for $1 \leq p \leq \infty$ we have
\begin{align}
\label{second-def-Ep}
\E_p 
= i_p\circ (i_{p^*})^* 
\  \text{ if } 1\leq p\leq \infty, \ 
\E_1 = i_1\circ (i_\infty)_*
\end{align}
where $(i_\infty)_*$ is the preadjoint of $i_\infty$. %In particular for $1\le p<\infty$ we have $(\E_p)^* = \E_{p^*}$ (with the convention that $\E_\infty= \E$).

\begin{remark}
\label{Proba-case} \normalfont
In the case where $\varphi$ is a normal linear functional, the previous defined conditional expectations $\E_p$ satisfies the formula:
\begin{equation}\label{Def-esperance-h-bis}
\E_p\big(D_{\varphi}^{\frac{1}{2p}} x D_{\varphi}^{\frac{1}{2p}}\big)
%\ov{\eqref{Map-extension-Lp}}{=}
= D_{\varphi}^{\frac{1}{2p}}\E(x)D_{\varphi}^{\frac{1}{2p}},\quad x \in \cal{M}. 
\end{equation}
 In particular if $\varphi$ is a normal state, they coincide with the conditional expectations $\mathcal E_p$ defined in \cite[Section 2]{JuX03}.
\end{remark}

\begin{proof}
In this case, the density operators $D_\varphi$  and $D_\phi$ associated with the weights $\varphi$ and $\phi$ belong respectively to the spaces $\L^1(\cal{M},\varphi)$ and $\L^1(\cal{N},\phi)$. Moreover by \eqref{inclus-L1} they obviously coincide modulo the embedding of the space $\L^1(\cal{N},\phi)$ into $\L^1(\cal{M},\varphi)$ previously described in \eqref{eq:embed-Lp}. Thus $D_\varphi$ belongs to the space $\L^1(\cal{N},\phi)$. Hence $D_\varphi^{\frac{1}{2p}} \in \L^{2p}(\cal{N},\phi)$ and the equation \eqref{Def-esperance-h-bis} is then an immediate consequence of the bimodularity property \eqref{esp-cond-prop}. The map $\cal{E}_p$ in \cite[Section 2]{JuX03} coincides thus with $\E_p$ on the subspace  $D_{\varphi}^{\frac{1}{2p}}\cal{M}D_{\varphi}^{\frac{1}{2p}}$, which is dense in the Banach space $\L^p(\cal{M},\varphi)$ by Lemma \ref{lemma2-GL}. These maps are both linear and contractive (for $\mathcal E_p$, see \cite[Lemma 2.2]{JuX03}), they coincide thus everywhere.
\end{proof}

It is well known, at least in the <<probabilistic case>>,  that the linear maps $\E_p$ are completely positive and contractive, see e.g.~\cite[p.~92]{JRX05}. The proof there is by the usual tensor product argument, and it works as well with our definition of $\E_p$ in the general case. But in our context these properties may be derived directly from the relations \eqref{second-def-Ep} and the complete positivity of the isometric maps $i_p$. Finally, note that an expanded unpublished version of \cite{HJX10} contains a more general approach of extension of positive maps than the one of \eqref{Map-extension-Lp}.

%%%%%%%%%%%%%%%%%%%%%%%%%%%%%%%%%%%%%%%%%%%%%%%%%%%%%%%%%%%%%%%%%%%%%%%%%%%%%%%%%%%%%%%
\section{Projections on von Neumann algebras}
\label{Sec-projections-von}

Let $A$ be a $\mathrm{C}^*$-algebra. Recall that a linear map $T \co A \to A$ is a Schwarz map \cite[Definition 9.9.5 p.~1023]{Pal01} if we have $T(x)^*T(x) \leq T(x^*x)$ for any $x \in A$. Note that a Schwarz map is positive. By \cite[Corollary 1.3.2 p.~9]{Sto13}, any 2-positive contraction $T \co A \to A$ on a unital $\mathrm{C}^*$-algebra $A$ is a Schwarz map. Moreover, by \cite[Theorem 2.2.2 (2) p.~17]{Sto13}, if $P \co A \to A$ is a faithful projection map which is a Schwarz map then the range $\Ran P$ of $P$ is a $\mathrm{C}^*$-subalgebra of $A$. Finally, recall that a faithful projection on a unital $\mathrm{C}^*$-algebra is necessarily unital by \cite[Lemma 2.2.3 p.~18]{Sto13}.

\begin{prop}
\label{Proj-cp-VNA}
Let $\cal{M}$ be a von Neumann algebra. If $P \co \cal{M} \to \cal{M}$ is a normal faithful projection which is a Schwarz map, then $P$ is a conditional expectation and $\Ran P$ is a von Neumann subalgebra.
\end{prop}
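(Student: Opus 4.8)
The plan is to bootstrap from the C$^*$-algebraic facts recalled just before the statement and then upgrade the resulting structure to the von Neumann level. First I would invoke \cite[Theorem 2.2.2 (2)]{Sto2}: since $P$ is a faithful projection and a Schwarz map, its range $N \ov{\mathrm{def}}{=} \Ran P$ is a C$^*$-subalgebra of $M$. Because $P$ is unital we have $1 \in N$, so $N$ is a unital C$^*$-subalgebra of $M$. The next point is that $N$ is in fact a von Neumann subalgebra: as $P$ is a normal contractive projection, $N = \ker(\Id_M - P)$ is the kernel of a normal (hence weak\sdash* continuous) map, so $N$ is weak\sdash* closed in $M$ and therefore a von Neumann subalgebra.

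Next I would establish the $N$-bimodularity of $P$, which together with $\Ran P = N$ is exactly the definition of a conditional expectation from Section \ref{Sec-conditional}. The key tool is a standard multiplicative-domain argument for Schwarz maps. Since $P$ is a unital Schwarz map, for every $a \in M$ one has $P(a)^*P(a) \leq P(a^*a)$; for $b \in N$, writing $b = P(b)$ and using that $P(b)^*P(b) = b^*b$ while $P^2 = P$ gives $P(b^*b) - P(b)^*P(b) = P(b^*b) - b^*b$, and one shows this forces equality $P(b^*b) = b^*b = P(b)^*P(b)$ for $b \in N$, i.e. $N$ lies in the multiplicative domain of $P$. (Here I would use faithfulness and positivity of $P$ on the element $P(b^*b) - b^*b \geq 0$, noting $P$ fixes it since $b^*b$ need not be in $N$ a priori — more carefully, one applies the Cauchy\sdash Schwarz inequality for the completely positive map $P$ in the form $P(x^*y)^*P(x^*x)^{-1}P(x^*y) \leq P(y^*y)$ on the relevant elements, or cites \cite[Theorem 3.1]{Cho}-type results on multiplicative domains). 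From $N \subseteq$ multiplicative domain one deduces $P(by) = P(b)P(y) = bP(y)$ and $P(yb) = P(y)b$ for $b \in N$, $y \in M$, and hence $P(byb') = bP(y)b'$ for $b,b' \in N$. Thus $P$ is $N$-bimodular with range $N$, i.e. a conditional expectation.

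Finally, completeness: the proposition only asserts $P$ is a conditional expectation and $\Ran P$ a von Neumann subalgebra, both of which are now in hand; the supplementary facts (complete positivity, normality and faithfulness of the conditional expectation onto the von Neumann subalgebra) are immediate from the results cited in the excerpt — a conditional expectation on a von Neumann algebra is automatically completely positive by \cite[Prop. page 118]{Str}, and $P$ was assumed normal and faithful. The main obstacle I anticipate is the bimodularity step: one must handle the multiplicative domain argument carefully because $M$ is a general (possibly non-$\sigma$-finite) von Neumann algebra and because $P$ is only assumed Schwarz rather than completely positive, so I would lean on the Kadison\sdash Schwarz inequality $P(a)^*P(a) \leq P(a^*a)$ and the self-adjoint polarization identity to pass from $b^*b$ to general products $byb'$, rather than on any finer structure.
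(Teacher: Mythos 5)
Your proof is correct, but it reaches the bimodularity by a genuinely different route than the paper. Both arguments start identically: St\o rmer's result \cite[Theorem 2.2.2 (2)]{Sto2} makes $\Ran P$ a unital $\mathrm{C}^*$-subalgebra, and normality makes $\Ran P=\ker(\Id_M-P)$ weak* closed, hence a von Neumann subalgebra. At that point the paper is done in one line: positivity and unitality give $\norm{P}=\norm{P(1)}=1$, and Tomiyama's theorem \cite[Theorem page 116]{Str} says a norm-one projection onto a $\mathrm{C}^*$-subalgebra is automatically a conditional expectation. You instead prove the $N$-bimodularity by hand through the multiplicative domain, and your instinct to avoid the $2$-positive Cauchy--Schwarz form $P(x^*y)^*P(x^*x)^{-1}P(x^*y)\leq P(y^*y)$ (not available for a mere Schwarz map) in favour of the Kadison--Schwarz inequality plus polarization is exactly right: expanding $P\big((a+tb)^*(a+tb)\big)\geq P(a+tb)^*P(a+tb)$ for $t\in\R$ and letting $t\to 0^{\pm}$ shows that $P(a^*a)=P(a)^*P(a)$ forces $P(a^*y)=P(a)^*P(y)$ for all $y\in M$, and taking adjoints (a Schwarz map is positive, hence $*$-preserving) gives the right-handed identity. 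Two small corrections to your middle step: once $\Ran P$ is known to be a $\mathrm{C}^*$-subalgebra, $b^*b\in\Ran P$ for every $b\in\Ran P$, so $P(b^*b)=b^*b=P(b)^*P(b)$ is immediate and no faithfulness is needed there (faithfulness is consumed only inside St\o rmer's theorem); and in your parenthetical argument $P$ does not \emph{fix} $d=P(b^*b)-b^*b$, rather $P(d)=P^2(b^*b)-P(b^*b)=0$, which together with faithfulness would also give $d=0$. As for what each approach buys: the paper's is the shortest possible reduction to a classical black box (Tomiyama), while yours is more self-contained and makes explicit that the Schwarz inequality alone drives the module property, which is a useful perspective even though it costs an extra polarization argument.
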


\begin{proof}
By the previous result, the range $\Ran P$ is a unital $\mathrm{C}^*$-subalgebra of $\cal{M}$. Since $P$ is normal, $\Ran P=\ker(\Id_\cal{M}-P)$ is weak* closed and thus $\Ran P$ is a von Neumann subalgebra of $\cal{M}$. Note that $\norm{P}=\norm{P(1)}=\norm{1}=1$ since $P$ is positive. Using Tomiyama's theorem \cite[Theorem p.~116]{Str81}, we conclude that $P$ is a (normal faithful) conditional expectation.
\end{proof}

%%%%%%%%%%%%%%%%%%%%%%%%%%%%%%%%%%%%%%%%%%%%%%%%%%%%%%%%%%%%%%%%%%%%%%%%%%%%%%%%%%%%%%%%%%%%%%%%%%%%%
\section{Positive linear maps between noncommutative $\L^p$-spaces}
\label{Positive-maps}

Our main tool will be the following extension of \cite[Theorem 3.1]{JRX05}. The previous lemmas of Section \ref{Haagerup-noncommutative} allows us to remove some assumptions in \cite[Theorem 3.1]{JRX05}. Since the proof of \cite[Theorem 3.1]{JRX05} unfortunately contains some gaps and misleading points\footnote{\thefootnote. The complete positivity is not proved. The argument for the contractivity is ineffective.} and since this result is fundamental for the sequel, we give full details.  

\begin{thm}
\label{Th-relevement-cp} 
Let $\cal{M}$ and $\cal{N}$ be von Neumann algebras. Suppose that $1 \leq p < \infty$. Let $T \co \L^p(\cal{M}) \to \L^p(\cal{N})$ be a positive linear map. Let $h$ be a positive element of $\L^p(\cal{M})$. 
Then there exists a unique linear map $v \co \cal{M} \to s(T(h))\cal{N}s(T(h))$ such that 
\begin{equation}
\label{equa-relevement}
T\big(h^{\frac{1}{2}}xh^{\frac{1}{2}}\big)
=T(h)^{\frac{1}{2}}v(x)T(h)^{\frac{1}{2}},\qquad x \in \cal{M}.
\end{equation}
Moreover, this map $v$ is unital, contractive, and normal. If $T$ is $n$-positive for some $1 \leq n \leq \infty$, then $v$ is also $n$-positive.
\end{thm}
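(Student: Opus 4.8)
My plan is to build $v$ first on the positive cone $M_+$ and then extend it by linearity. Write $k\ov{\mathrm{def}}{=}T(h)\in\L^p(N)_+$ and $e\ov{\mathrm{def}}{=}s(T(h))$, so that $eNe$ is the prescribed target algebra. For $x\in M_+$, from $x\leq\norm{x}_M\,1$ in $M$ and $h^{\frac12}\in\L^{2p}(M)$, Lemma \ref{Lemma-DP}(2) gives $0\leq h^{\frac12}xh^{\frac12}\leq\norm{x}_M\,h$ in $\L^p(M)$; applying the positive map $T$ yields $0\leq T(h^{\frac12}xh^{\frac12})\leq\norm{x}_M\,k$, and Lemma \ref{lemma-DP-bis} then produces a unique $v(x)\in eNe$ with $T(h^{\frac12}xh^{\frac12})=k^{\frac12}v(x)k^{\frac12}$ and $0\leq v(x)\leq\norm{x}_M\,e$. (If $T(h)=0$ everything is trivial, so I assume $k\ne 0$.) I would then check additivity and positive homogeneity of $x\mapsto v(x)$ on $M_+$ straight from this identity, invoking the injectivity of $z\mapsto k^{\frac12}zk^{\frac12}$ on $eNe=s(k)Ns(k)$ from Lemma \ref{lemma2-GL}(1); this produces a unique positive linear extension $v\co M\to eNe$ satisfying \eqref{equa-relevement}, and that same injectivity gives uniqueness of \emph{any} map satisfying \eqref{equa-relevement}. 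Plugging $x=1$ into \eqref{equa-relevement} gives $k^{\frac12}v(1)k^{\frac12}=k=k^{\frac12}ek^{\frac12}$, hence $v(1)=e$; being positive and unital, $v$ is then automatically contractive (a positive unital linear map on a unital C$^*$-algebra has norm $\norm{v(1)}$, see e.g. \cite{Pau}).

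The crux — and the step I expect to be the main obstacle — is the normality of $v$, the subtlety being that $T$ is \emph{not} assumed normal, so this cannot be read off from the identity $k^{\frac12}v(x)k^{\frac12}=T(h^{\frac12}xh^{\frac12})$. My plan is to exploit that sandwiching by $h^{\frac12}$, respectively $k^{\frac12}$, lands one inside an $\L^1$-space, i.e. a predual paired $\sigma$-weakly continuously with the ambient von Neumann algebra. Concretely, if $T^*\co\L^{p^*}(N)\to\L^{p^*}(M)$ denotes the Banach-space adjoint of $T$, then for $l\in\L^{p^*}(N)$ the trace property gives
\[
\tr\big(T(h^{\frac12}xh^{\frac12})\,l\big)=\tr\big(h^{\frac12}xh^{\frac12}\,T^*(l)\big)=\tr\big(x\,h^{\frac12}T^*(l)h^{\frac12}\big),
\]
with $h^{\frac12}T^*(l)h^{\frac12}\in\L^1(M)=M_*$; hence $x\mapsto T(h^{\frac12}xh^{\frac12})$ is continuous from $M$ with its $\sigma$-weak topology to $\L^p(N)$ with its weak topology — with no normality hypothesis on $T$. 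The same computation with $T$ replaced by the identity shows $z\mapsto k^{\frac12}zk^{\frac12}$ is continuous from $eNe$ ($\sigma$-weak) to $\L^p(N)$ (weak). Now, given a bounded net $x_i\to x$ $\sigma$-weakly in $M$, the net $(v(x_i))$ lies in a fixed ball of $eNe$, hence has $\sigma$-weak cluster points; for any such cluster point $w$, comparing the two continuity statements along a subnet gives $k^{\frac12}wk^{\frac12}=k^{\frac12}v(x)k^{\frac12}$ weakly in $\L^p(N)$, so $w=v(x)$ by Lemma \ref{lemma2-GL}(1). Thus $v(x_i)\to v(x)$ $\sigma$-weakly and $v$ is normal.

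For the $n$-positivity claim ($1\leq n\leq\infty$, the case $n=\infty$ being the conjunction of the finite ones), I would argue by tensoring, using the uniqueness already in hand. Under the identification $S^p_n(\L^p(M))=\L^p(\M_n(M))$, the map $T_n\ov{\mathrm{def}}{=}\Id_{S^p_n}\ot T$ is a positive linear map $\L^p(\M_n(M))\to\L^p(\M_n(N))$, and $\widetilde h\ov{\mathrm{def}}{=}\mathrm{diag}(h,\dots,h)\in\L^p(\M_n(M))_+$ satisfies $T_n(\widetilde h)=\mathrm{diag}(k,\dots,k)$ and $s(T_n(\widetilde h))=1_{\M_n}\ot e$. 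Applying the part of the theorem already proved to $(T_n,\widetilde h)$ yields a positive lift $v_n\co\M_n(M)\to\M_n(eNe)$; but for $X=[x_{ij}]$ one has $T_n(\widetilde h^{\frac12}X\widetilde h^{\frac12})=[T(h^{\frac12}x_{ij}h^{\frac12})]=[k^{\frac12}v(x_{ij})k^{\frac12}]=T_n(\widetilde h)^{\frac12}(\Id_{\M_n}\ot v)(X)T_n(\widetilde h)^{\frac12}$, so uniqueness forces $v_n=\Id_{\M_n}\ot v$; since $v_n$ is positive, $v$ is $n$-positive. I expect the routine parts (linearity, the trace manipulations, the matrix identification) to be unproblematic; the one genuinely delicate point is the normality argument above, whose key observation is precisely that no normality assumption on $T$ is required.
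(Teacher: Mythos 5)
Your proposal is correct, and up to the normality step it is essentially the paper's own argument: the same chain (Lemma \ref{Lemma-DP}(2), then Lemma \ref{lemma-DP-bis}, then the injectivity of $z \mapsto k^{\frac{1}{2}} z k^{\frac{1}{2}}$ from Lemma \ref{lemma2-GL}(1)) produces $v$ on $M_+$; linearity, unitality, contractivity via positivity plus unitality, and uniqueness are obtained exactly as in the paper; and your $n$-positivity argument (tensorize with $S^p_n$, apply the already-proved statement to $\Id_{S^p_n} \ot T$ and $\I_n \ot h$, and identify the resulting lift with $\Id_{\M_n} \ot v$ by uniqueness) is literally the paper's. The one genuine divergence is normality. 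The paper constructs an explicit pre-adjoint: with $k = T(h)^p$ it defines $w$ on the dense subspace $k^{\frac{1}{2}} N k^{\frac{1}{2}}$ of $\L^1(eNe)$ by $w\big(k^{\frac{1}{2}} y k^{\frac{1}{2}}\big) = h^{\frac{1}{2}} T^*\big(k^{\frac{1}{2p^*}} y k^{\frac{1}{2p^*}}\big) h^{\frac{1}{2}}$, verifies $\tr\big(w(k^{\frac{1}{2}} y k^{\frac{1}{2}})x\big) = \tr\big(k^{\frac{1}{2}} y k^{\frac{1}{2}} v(x)\big)$, and concludes that $w$ extends to a contraction $\L^1(eNe) \to \L^1(M)$ with $w^* = v$, whence $v$ is normal. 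You instead prove $\sigma$-weak-to-$\sigma$-weak continuity of $v$ on bounded nets by a cluster-point argument, resting on the same key duality identity $\tr\big(T(h^{\frac{1}{2}} x h^{\frac{1}{2}}) l\big) = \tr\big(x\, h^{\frac{1}{2}} T^*(l) h^{\frac{1}{2}}\big)$, which is exactly the computation behind the paper's $w$. Both routes are valid; the paper's has the advantage of exhibiting the pre-adjoint explicitly (together with its contractivity on $\L^1$), while yours is softer but tacitly uses the standard fact that a bounded linear map between von Neumann algebras which is $\sigma$-weakly continuous on bounded nets is normal (a Krein--Smulian-type reduction; alternatively, for your positive $v$, note that a bounded increasing net converges $\sigma$-weakly to its supremum, so bounded-net continuity already yields normality in the lattice sense). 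You should make that final reduction explicit, but it is the only missing sentence.
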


\begin{proof}
We let $e \ov{\mathrm{def}}{=} s(T(h))$ be the support projection of the element $T(h)$ of $\L^p(\cal{N})$. For any $x \in \cal{M}_+$, %by \cite[1.6.9]{Dix77}, 
since $0 \leq x \leq \norm{x}_\infty$, we have by the second part of Lemma \ref{Lemma-DP} that $0 \leq h^\frac{1}{2} x h^\frac{1}{2} \leq \norm{x}_\infty h$. Using the positivity of $T$, we obtain that
$$
0 
\leq T\big(h^\frac{1}{2} x h^\frac{1}{2}\big) 
\leq \norm{x}_\infty T(h).
$$
If $x \not=0$, note that $s(\norm{x}_\infty T(h))=s(T(h))=e$. Hence, for any $x \in \cal{M}_+$, there exists by Lemma \ref{lemma-DP-bis} a unique element $z(x) \in e\cal{N}e$ satisfying
$$
T\big(h^{\frac{1}{2}}xh^{\frac{1}{2}}\big)
=\big(\norm{x}_\infty T(h)\big)^{\frac{1}{2}} z(x)\big(\norm{x}_\infty T(h)\big)^{\frac{1}{2}}.
$$
Moreover, we have $0 \leq z(x) \leq e$. Note that $z(1)=e$ since $e=s((T(h))^{\frac{1}{2}})$. For any $x \in \cal{M}_+$, we let $v(x) \ov{\mathrm{def}}{=} \norm{x}_\infty z(x)$. So $\norm{v(x)}_\infty \leq \norm{x}_\infty \norm{z(x)}_\infty \leq \norm{x}_\infty \norm{e}_\infty \leq \norm{x}_\infty$. Furthermore, we have
\begin{equation}
\label{Fund-relation}
T\big(h^{\frac{1}{2}}xh^{\frac{1}{2}}\big)
=T(h)^{\frac{1}{2}}v(x)T(h)^{\frac{1}{2}}
\end{equation}
and $v(1)=e$. Let us show that $v \co \cal{M}_+ \to (e\cal{N}e)_+$ is additive. For any $x,y \in \cal{M}_+$, we have
\begin{align*}
T(h)^{\frac{1}{2}}v(x+y)T(h)^{\frac{1}{2}} 
&\ov{\eqref{Fund-relation}}{=} T\big(h^{\frac{1}{2}}(x+y)h^{\frac{1}{2}}\big)
=T\big(h^{\frac{1}{2}}xh^{\frac{1}{2}}\big)+T\big(h^{\frac{1}{2}}yh^{\frac{1}{2}}\big) \\
&\ov{\eqref{Fund-relation}}{=}T(h)^{\frac{1}{2}}v(x)T(h)^{\frac{1}{2}}+T(h)^{\frac{1}{2}}v(y)T(h)^{\frac{1}{2}}
=T(h)^{\frac{1}{2}}\big(v(x)+v(y)\big)T(h)^{\frac{1}{2}}.
\end{align*}
We conclude with Lemma \ref{lemma2-GL} that $v(x+y)=v(x)+v(y)$. By a standard reasoning, see e.g.~\cite[Lemma 1.26 p.~24]{AlT07}, the map $v$ extends uniquely to a real linear positive map $v \co \cal{M}_\sa \to (e\cal{N}e)_\sa$. We may extend it to a positive complex linear map from $\cal{M}$ into $e\cal{N}e$ by letting $v(x+\i y) =v(x) + \i v(y)$. As a positive and unital map ($v(1)=e$) between $\mathrm{C}^*$-algebras, $v$ is contractive by \cite[Corollary 2.9 p.~15]{Pau02}. The equation \eqref{equa-relevement} follows by linearity from the case $x \geq 0$. 
 
The uniqueness of $v$ is a consequence of the first part of Lemma \ref{lemma2-GL} applied with $T(h)$ and $\cal{N}$ instead of $h$ and $\cal{M}$.

Now, we prove that the map $v$ is normal. Since $1 \leq p< \infty$, we may consider the adjoint $T^* \co \L^{p^*}(\cal{N}) \to \L^{p^*}(\cal{M})$. We define the element $k\ov{\mathrm{def}}{=} T(h)^p$ of $\L^1(\cal{M})_+$ and define a linear map $w \co k^\frac{1}{2} \cal{N}k^\frac{1}{2} \to \L^1(\cal{M})$ on the dense subspace $k^\frac{1}{2} \cal{N} k^\frac{1}{2}$ of $\L^1(e\cal{N}e)$ (see Lemma \ref{lemma2-GL}) by
\begin{equation}
\label{Def-de-w}
w(k^\frac{1}{2} yk^\frac{1}{2}) 
\ov{\mathrm{def}}{=} h^\frac{1}{2} T^*\big(k^\frac{1}{2p^*} yk^\frac{1}{2p^*}\big)h^\frac{1}{2}, \quad y\in \cal{N}.
\end{equation}
If $x \in \cal{M}$, we have
\begin{align}\label{transpose}
\notag &\tr\big(w(k^\frac{1}{2} yk^\frac{1}{2}) x\big)           
\ov{\eqref{Def-de-w}}{=}\tr\big(h^\frac{1}{2} T^*\big(k^\frac{1}{2p^*} yk^\frac{1}{2p^*}\big)h^\frac{1}{2} x\big)
=\tr\big( T^*\big(k^\frac{1}{2p^*} yk^\frac{1}{2p^*}\big)h^\frac{1}{2} xh^\frac{1}{2}\big)\\
&=\tr\big(k^\frac{1}{2p^*} yk^\frac{1}{2p^*}T(h^\frac{1}{2} xh^\frac{1}{2})\big)
\ov{\eqref{equa-relevement}}{=} \tr\big(k^\frac{1}{2p^*} yk^\frac{1}{2p^*}T(h)^{\frac{1}{2}}v(x)T(h)^{\frac{1}{2}}\big)
=\tr\big(k^\frac{1}{2} yk^\frac{1}{2}v(x)\big).
\end{align} 
It follows that
\begin{align*}
	\left|\tr \big(w(k^\frac{1}{2} yk^\frac{1}{2})x\big) \right|
	&= \left|\tr\big(k^\frac{1}{2}y k^\frac{1}{2} v(x)\big) \right|
\leq \bnorm{k^\frac{1}{2} y k^\frac{1}{2}}_{1} \norm{v(x)}_\infty
\leq \bnorm{k^\frac{1}{2} y k^\frac{1}{2}}_{1} \norm{x}_\infty.
\end{align*} 
Therefore $w$ extends to a contraction from the space $\L^1(e\cal{N}e)$ into the space $\L^1(\cal{M})$. 
Furthermore, by \eqref{transpose} we conclude that $w^* = v$. Hence $v$ is normal.

Assume that $T$ is $n$-positive. Using the first part of the proof with the positive map $T^{(n)} \ov{\mathrm{def}}{=} \Id_{S^p_n} \ot T \co S^p_n(\L^p(\cal{M}))\to S^p_n(\L^p(\cal{N}))$ and by replacing $h$ with $\I_n \ot h$ whose projection support is $s(\I_n \ot h)=s(\I_n) \ot s(h) = \I_n \ot e$, we see that there exists a unique linear map $v_n \co \M_n(\cal{M}) \to (\I_n \ot e)\M_n(\cal{N})(\I_n \ot e)=\M_n(e\cal{N}e)$ such that for any $[x_{ij}] \in \M_n(\cal{M})$ we have
\begin{equation*}
T^{(n)}\left((\I_n \ot h)^{\frac{1}{2}}
[x_{ij}](\I_n \ot h)^{\frac{1}{2}}\right)\\
=\big(T^{(n)}(\I_n \ot h)\big)^{\frac{1}{2}}
v_n\left([x_{ij}]\right)
\big(T^{(n)}(\I_n \ot h)\big)^{\frac{1}{2}}.
\end{equation*}
Moreover, this map is unital, contractive, and normal. Note that
\begin{align*}
\MoveEqLeft
T^{(n)}\left((\I_n \ot h)^{\frac{1}{2}}
[x_{ij}](\I_n \ot h)^{\frac{1}{2}}\right) 
=T^{(n)}\left(\begin{bmatrix}
 h    &  &   \\
     & \ddots & \\
     &  &  h  \\
\end{bmatrix}^{\frac{1}{2}}
\begin{bmatrix}
 x_{11}    & \cdots & x_{1n}  \\
  \vdots   &  & \vdots	\\
  x_{n1}   & \cdots &  x_{nn}  \\
\end{bmatrix}\begin{bmatrix}
 h    &  &   \\
     & \ddots & \\
     &  &  h  \\
\end{bmatrix}^{\frac{1}{2}}\right)  \\         
&=T^{(n)}\left(\begin{bmatrix}
  h^{\frac{1}{2}} x_{11} h^{\frac{1}{2}}   & \cdots & h^{\frac{1}{2}} x_{1n}h^{\frac{1}{2}}  \\
\vdots	&&\vdots\\
 h^{\frac{1}{2}}  x_{n1} h^{\frac{1}{2}}  & \cdots &h^{\frac{1}{2}} x_{nn} h^{\frac{1}{2}}   \\
\end{bmatrix}\right) 
=\begin{bmatrix}
  T(h^{\frac{1}{2}} x_{11} h^{\frac{1}{2}})   & \cdots &T(h^{\frac{1}{2}}  x_{1n}h^{\frac{1}{2}})  \\
\vdots	&& \vdots\\
 T(h^{\frac{1}{2}}  x_{n1} h^{\frac{1}{2}})  & \cdots &T(h^{\frac{1}{2}}  x_{nn} h^{\frac{1}{2}})   \\
\end{bmatrix} \\
&\ov{\eqref{equa-relevement}}{=} \begin{bmatrix}
  T(h)^{\frac{1}{2}}v(x_{11})T(h)^{\frac{1}{2}}   & \cdots&T(h)^{\frac{1}{2}}v(x_{1n})T(h)^{\frac{1}{2}}  \\
\vdots &&\vdots	\\
  T(h)^{\frac{1}{2}}v(x_{n1})T(h)^{\frac{1}{2}}  &  \cdots &T(h)^{\frac{1}{2}}v(x_{2nn})T(h)^{\frac{1}{2}} \\
\end{bmatrix}
\end{align*} 
\begin{align*} 
\MoveEqLeft
=\begin{bmatrix}
 T(h)    &  &   \\
     & \ddots & \\
     &  &  T(h)  \\
\end{bmatrix}^{\frac{1}{2}}
\begin{bmatrix}
 v(x_{11})    & \cdots & v(x_{1n})  \\
  \vdots   &  & \vdots	\\
  v(x_{n1})   & \cdots &  v(x_{nn})  \\
\end{bmatrix}
\begin{bmatrix}
 T(h)    &  &   \\
     & \ddots & \\
     &  &  T(h)  \\
\end{bmatrix}^{\frac{1}{2}} \\
&=\big(T^{(n)}(\I_n \ot h)\big)^{\frac{1}{2}}
(\Id_{\M_n} \ot v)\left([x_{ij}]\right)
\big(T^{(n)}(\I_n \ot h)\big)^{\frac{1}{2}}.
\end{align*} 
Consequently, by uniqueness, we conclude that $\Id_{\M_n} \ot v=v_n$. Hence $v$ is $n$-positive.
\end{proof}

\section{Support projections of contractive projections}
\label{From-local-to-global}
%%%%%%%%%%%%%%%%%%%%%%%%%%%%%%%%%%%%%%%%%%%%%%%%%%%%%%%%%%%%%%%%%%%%%%%%%%%%%%%%%%%%%%

Suppose that $1 \leq p<\infty$. Let $\cal{M}$ be a $\sigma$-finite ($=$ countably decomposable) von Neumann algebra and $P \co \L^p(\cal{M}) \to \L^p(\cal{M})$ be a positive contractive projection. We define the support projection $s(P)$ of $\Ran P$ as the supremum in $\cal{M}$ of the supports of the positive elements of $\Ran P$:
\begin{equation}
\label{support-s(P)}
s(P)
\ov{\mathrm{def}}{=} \bigvee_{h \in \Ran P, h \geq 0} s(h).
\end{equation}

\begin{prop}
\label{Prop-sh=sP}
Suppose that $1 \leq p<\infty$. Let $\cal{M}$ be a $\sigma$-finite von Neumann algebra and $P \co \L^p(\cal{M}) \to \L^p(\cal{M})$ be a positive contractive projection. Then there exists a positive element $h$ of $\Ran P$ such that $s(h)=s(P)$.
\end{prop}

\begin{proof}
We note first that for every at most countable family $(h_i)_{i\in I}$ of positive elements in $\Ran P$ there is a positive element $h$ in $\Ran P$ such that $s(h) \geq s(h_i)$ for all $i \in I$. Indeed assuming $\norm{h_i}_p \leq 1$ for all $i$, take simply $h=\sum\limits_{i \in I} 2^{-i}h_i$. By \cite[Exercise 7.6.46 p.~500]{KaR97}, we can consider a normal faithful state $\varphi$ on $\cal{M}$. We introduce the positive real number 
$$
a
\ov{\mathrm{def}}{=} \sup\big\{\varphi(s(h)): h \in \Ran P, h \geq 0\big\}.
$$ 
This supremum is attained. Indeed, consider a positive sequence $(h_n)_{}$ of positive elements in $\Ran P$ such that $\varphi(s(h_n)) \uparrow a$, then any positive element $h \in \Ran P$ such that $s(h) \geq s(h_n)$ for all $n$ satisfies $\varphi(s(h))= a$. 

Fix such an element $h \in \Ran P$ with $h \geq 0$ such that $\varphi(s(h))=a$. We have $s(h) \leq s(P)$. If $s(h) \not= s(P)$, there exists $h' \in \Ran P$ with $h' \geq 0$ such that $s(h') \not\leq s(h)$. This implies that 
$$
s(h+h')
\geq s(h)
\vee s(h')>s(h).
$$ 
Hence $\varphi(s(h+h')) > \varphi(s(h))$ which is impossible by maximality of $\varphi(s(h))$. Thus $s(h)=s(P)$ as desired.
\end{proof}

If $k \in \Ran P$, we can write $k=k_1-k_2+\i(k_3-k_4)$ with $k_1,k_2,k_3,k_4 \geq 0$. Hence $k=P(k_1)-P(k_2)+\i(P(k_3)-P(k_4))$ where $P(k_1),P(k_2),P(k_3),P(k_4) \geq 0$ since  $P$ is positive. Clearly, this implies that $s(P)ks(P)=k$ and consequently $\Ran P=s(P) \Ran P\, s(P)$. Moreover, we have 
$$
\Ran P= P(\Ran P)
=P\big(s(P)\,\Ran P\, s(P)\big)
\subset  P\big(s(P)\L^p(\cal{M}) s(P)\big)\subset \Ran P.
$$
Finally, we obtain 
\begin{equation}
\label{Inter-200}
\Ran P
=P\big(s(P)\L^p(\cal{M})s(P)\big)
=P\big(s(h)\L^p(\cal{M})s(h)\big). 
\end{equation}  

The third part of Theorem \ref{main-th-ds-intro} is a consequence of a more general fact on contractive projections acting on noncommutative $\L^p$-spaces associated to \textit{arbitrary} von Neumann algebras. To state the result, we introduce the left and right support projections of a contractive projection $P \co \L^p(\cal{M}) \to \L^p(\cal{M})$, that we denote respectively by $s_\ell(P)$ and $s_r(P)$. Indeed, we let
\begin{equation}
\label{left_right_supp}
s_\ell(P)
\ov{\mathrm{def}}{=} \bigvee_{h \in \Ran P} s_\ell (h),\quad 
s_r(P) 
\ov{\mathrm{def}}{=} \bigvee_{h \in \Ran P} s_r (h).
\end{equation}
Clearly if $P$ is positive then $P$ is an adjoint preserving map and consequently $s_\ell(P) = s_r(P)$ and these supports coincide with the support projection $s(P)$ defined in \eqref{support-s(P)}. Our goal is to prove Proposition \ref{prop:range-pc} which describes the link between $P$ and its left and right support projections. We need some information about normalized duality mappings.

We will use the following result \cite[Theorem 6]{CoS70} of Cohen and Sullivan.

\begin{thm}
\label{thm-Sullivan}
A subspace of a smooth Banach space $X$ can be the range of at most one projection of norm one. 
\end{thm}

Now, we can deduce the following result.

\begin{prop}
\label{prop:range-pc}
Let $\cal{M}$ be a von Neumann algebra. Suppose that $1 < p < \infty$. Let $P \co \L^p(\cal{M}) \to \L^p(\cal{M})$ be a contractive projection. Then for any $h \in \L^p(\cal{M})$, we have
$$
P(x) 
= P(s_\ell(P) x s_r(P)), \quad x \in \L^p(\cal{M}).
$$
\end{prop}

\begin{proof}
The linear map $Q \co \L^p(\cal{M}) \to \L^p(\cal{M})$, $x \mapsto P(s_\ell(P) x s_r(P))$ is obviously a contractive projection on the subspace $\Ran P$. It is well-known that the Banach space $\L^p(\cal{M})$ is smooth by \cite[Corollary 5.2]{PiX03}. By Theorem \ref{thm-Sullivan}, we conclude that $P=Q$.
\end{proof}

\begin{remark}
\label{cas-L1} \normalfont
It is well-known that even in the commutative case and for positive contractive projections the statement of Proposition \ref{prop:range-pc} becomes false for $p=1$.
\end{remark}

%%%%%%%%%%%%%%%%%%%%%%%%%%%%%%%%%%%%%%%%%%%%%%%%%%%%%%%%%%%%%%%%%%%%%%%%%%%%%%%%%%%%%%%%%%%%%%%%
\section{A local description of 2-positive contractive projections}
\label{Sec-a-local}
%%%%%%%%%%%%%%%%%%%%%%%%%%%%%%%%%%%%%%%%%%%%%%%%%%

Let $P \co \L^p(\cal{M}) \to \L^p(\cal{M})$ be a non-trivial 2-positive contractive projection on an arbitrary noncommutative $\L^p$-space defined by a normal semifinite faithful weight $\varphi$. Consider a non-zero positive element $h$ of $\Ran P$. We have $P(h)=h$ and $h \in \L^p(\cal{M})_+$. We also consider the reduced von Neumann algebra $\cal{M}_h \ov{\mathrm{def}}{=} s(h)\cal{M}s(h)$. Recall that we have seen in Section \ref{Haagerup-noncommutative} that the space $s(h)\L^p(\cal{M})s(h)$ may be identified with the noncommutative $\L^p$-space $\L^p(\cal{M}_h)$ of the reduced von Neumann algebra $\cal{M}_h=s(h)\cal{M}s(h)$.

By applying Theorem \ref{Th-relevement-cp} to the restriction $P|_{s(h)\L^p(\cal{M})s(h)} \co s(h)\L^p(\cal{M})s(h) \to \L^p(\cal{M})$ and to the positive element $h$ of $\L^p(\cal{M})$, we see that there exists a unique linear map $\E \co \cal{M}_h \to s(P(h)) \cal{M}s(P(h))=\cal{M}_h$ such that
\begin{equation}
\label{relevement-E_h}
\quad P\big(h^{\frac{1}{2}}xh^{\frac{1}{2}}\big)
=h^{\frac{1}{2}} \E(x) h^{\frac{1}{2}}, \qquad x \in \cal{M}_h.
\end{equation}
Moreover, this map $\E$ is unital, contractive, normal and 2-positive. We will show that $\E \co \cal{M}_h \to \cal{M}_h$ is a conditional expectation onto a von Neumann subalgebra $\cal{N}$ of $\cal{M}_h$. We will also use the notation $\cal{N}_h$ for the algebra $\cal{N}$, specially in Section \ref{Carac-Nh}.

\begin{lemma}
\label{Lemma-faithful}
The linear map $\E \co \cal{M}_h \to \cal{M}_h$ is a normal faithful conditional expectation onto a von Neumann subalgebra $\cal{N}$ of $\cal{M}_h$.
\end{lemma}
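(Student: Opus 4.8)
The goal is to upgrade the map $\E_h \co M_h \to M_h$ — already known to be a normal, unital, contractive, faithful, $2$-positive projection from \eqref{relevement-E_h} and Lemma \ref{Lemma-faithful} — to a normal conditional expectation onto a von Neumann subalgebra. The strategy is to bring Proposition \ref{Proj-cp-VNA} to bear, which says precisely that a normal faithful unital projection on a von Neumann algebra that is a Schwarz map is automatically a conditional expectation onto a von Neumann subalgebra. So essentially everything reduces to checking that $\E_h$ is a Schwarz map, and that its range is weak*-closed.

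\textbf{Step 1: $\E_h$ is a Schwarz map.} Since $M_h = s(h)Ms(h)$ is a (unital) von Neumann algebra and $\E_h$ is $2$-positive and contractive, this is immediate from \cite[Corollary 1.3.2]{Sto2}, quoted just before Proposition \ref{Proj-cp-VNA}: every $2$-positive contraction on a unital C$^*$-algebra is a Schwarz map. So $\E_h(x)^*\E_h(x) \leq \E_h(x^*x)$ for all $x \in M_h$.

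\textbf{Step 2: apply Proposition \ref{Proj-cp-VNA}.} We have checked that $\E_h \co M_h \to M_h$ is a normal (Theorem \ref{Th-relevement-cp}), faithful (Lemma \ref{Lemma-faithful}), unital ($\E_h(s(h)) = s(h)$) projection which is a Schwarz map. Proposition \ref{Proj-cp-VNA} then gives directly that $\E_h$ is a conditional expectation and that $N_h \ov{\mathrm{def}}{=} \Ran \E_h$ is a von Neumann subalgebra of $M_h$. (Internally, Proposition \ref{Proj-cp-VNA} invokes \cite[Theorem 2.2.2(2)]{Sto2} to see the range is a C$^*$-subalgebra, uses normality to see $\Ran\E_h = \ker(\Id_{M_h} - \E_h)$ is weak*-closed, observes $\norm{\E_h} = \norm{\E_h(s(h))} = 1$, and closes with Tomiyama's theorem.)

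\textbf{Main obstacle.} There is no real obstacle here: the lemma is a clean consequence of the machinery assembled in the preliminaries. The only point requiring a moment's care is that Proposition \ref{Proj-cp-VNA} is stated for a von Neumann algebra with unit $1$, whereas here the ambient algebra is the reduced algebra $M_h = s(h)Ms(h)$ whose unit is $s(h)$; one just notes that $\E_h$ fixes $s(h)$ and all the cited results (the Schwarz property, Størmer's theorems, Tomiyama) apply verbatim with $s(h)$ playing the role of the unit. Thus the proof is essentially a two-line invocation of Step 1 followed by Proposition \ref{Proj-cp-VNA}.
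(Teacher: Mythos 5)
Your overall route is the same as the paper's: the lemma is reduced to Proposition \ref{Proj-cp-VNA}, with the Schwarz property coming from \cite[Corollary 1.3.2]{Sto2} (2-positive contraction), normality from Theorem \ref{Th-relevement-cp}, and faithfulness from Lemma \ref{Lemma-faithful}. However, there is a genuine gap: you treat the fact that $\E_h$ is a \emph{projection} (i.e. $\E_h^2=\E_h$) as ``already known from \eqref{relevement-E_h} and Lemma \ref{Lemma-faithful}'', and it is not. Theorem \ref{Th-relevement-cp} only yields that $\E_h$ is unital, contractive, normal and $2$-positive, and Lemma \ref{Lemma-faithful} only adds faithfulness; idempotency is established nowhere before this lemma, and it is precisely the hypothesis of Proposition \ref{Proj-cp-VNA} that carries the content here, since it is the only place where the idempotency of $P$ gets transferred to the level of the von Neumann algebra $M_h$. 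Without it, your Step 2 invokes the proposition with an unverified hypothesis.

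The missing argument is short but essential, and it is the first half of the paper's proof: for $x \in M_h$, since $\E_h(x)\in M_h$ one may apply \eqref{relevement-E_h} twice to get
$$
h^{\frac{1}{2}} \E_h(x) h^{\frac{1}{2}}
= P\big(h^{\frac{1}{2}} x h^{\frac{1}{2}}\big)
= P^2\big(h^{\frac{1}{2}} x h^{\frac{1}{2}}\big)
= P\big(h^{\frac{1}{2}} \E_h(x) h^{\frac{1}{2}}\big)
= h^{\frac{1}{2}} \E_h\big(\E_h(x)\big) h^{\frac{1}{2}},
$$
and then the uniqueness part of Theorem \ref{Th-relevement-cp} (equivalently, the injectivity of $y \mapsto h^{\frac{1}{2}} y h^{\frac{1}{2}}$ on $M_h$ from Lemma \ref{lemma2-GL}) gives $\E_h^2=\E_h$. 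Once this is inserted, the remainder of your argument (Schwarz map, normality, faithfulness, unitality, then Proposition \ref{Proj-cp-VNA} via St{\o}rmer and Tomiyama) coincides with the paper's proof, and your remark that the unit of $M_h$ is $s(h)$ rather than $1$ is correct and causes no difficulty.
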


\begin{proof}
We start by showing that the map $\E$ is faithful. Recall that for any $1 \leq p <\infty$, the norm of the Banach space $\L^p(\cal{M})$ is strictly monotone\footnote{\thefootnote. Suppose that $0 \leq x \leq y$. If $x \not=y$ then we have $\norm{x}_p < \norm{y}_p$.}. Now, we will show that if $0 \leq k \leq h$ and $P(k)=0$ then $k=0$. Indeed, from 
$$
h
=P(h)
=P(h)-P(k)
=P(h-k)
$$ 
we deduce that $\norm{h}_p = \norm{P(h-k)}_p \leq \norm{h-k}_p$ by the fact that $P$ is contractive. Since $0 \leq h-k\leq h$ we infer that $\norm{h}_p=\norm{h-k}_p$ and finally $k=0$ by strict monotonicity of the $\L^p$-norm. 

It results at once that $\E$ is faithful. Indeed, if $x \in \cal{M}_h^+$ and $\E(x)=0$ we have
$$
P\big(h^{\frac{1}{2}}xh^{\frac{1}{2}}\big)
\ov{\eqref{relevement-E_h}}{=} h^{\frac{1}{2}} \E(x) h^{\frac{1}{2}}
=0.
$$
Since $h^{\frac{1}{2}}xh^{\frac{1}{2}} \leq \norm{x}_{\infty}h$ by \cite[1.6.9 p.~18]{Dix77} we see that $h^{\frac{1}{2}}xh^{\frac{1}{2}}=0$ by the first part of the proof. Since $\cal{M}_h=s(h)\cal{M}s(h)$, we conclude that $x=0$ by Lemma \ref{lemma2-GL}. 

For any $x \in \cal{M}_h$, we have 
$$
P\big(h^{\frac{1}{2}}xh^{\frac{1}{2}}\big)
=P^2\big(h^{\frac{1}{2}}xh^{\frac{1}{2}}\big)
\ov{\eqref{relevement-E_h}}{=} P\big(h^{\frac{1}{2}} \E(x) h^{\frac{1}{2}}\big)
\ov{\eqref{relevement-E_h}}{=} h^{\frac{1}{2}} \E^2(x) h^{\frac{1}{2}}.
$$
Using the uniqueness of $\E$ given by Theorem \ref{Th-relevement-cp}, we infer that $\E^2=\E$, i.e.~$\E$ is a projection. Since $\E$ is in addition a 2-positive contraction, we deduce by Proposition \ref{Proj-cp-VNA} that $\cal{N} \ov{\mathrm{def}}{=} \Ran \E$ is a von Neumann subalgebra of $\cal{M}_h$ and that $\E$ is a conditional expectation from $\cal{M}_h$ onto $\cal{N}$.
\end{proof}

With the construction \eqref{Extension-weight1}, we can assume that the centralizer of the normal semifinite faithful weight $\varphi$ contains the projection $s(h)$. Note that $h^{p}$ is a positive element of the space $\L^1(\cal{M})$. At present, we consider the normal faithful positive linear form $\psi$ on the reduced von Neumann algebra $\cal{M}_h$ defined by
\begin{equation}
\label{psih-def}
\psi(x)
\ov{\mathrm{def}}{=} \tr_\varphi(h^px), \quad x \in \cal{M}_h.
\end{equation}

%that $h^{p}$ belongs to $\L^1(\cal{M})$, thus the map $x\mapsto \tr(h^px)$ defines a normal positive linear form on $\cal{M}$ with support $s(h)$ (a normal state if $\norm{h}_p=1$). Let $\psi_h$ be its restriction to the algebra $\cal{M}_h$.

Now, we will prove that the conditional expectation $\E$ is $\psi$-invariant. This property will allows us to extend $\E$ on all the spaces $\L^p(\cal{M}_h)$ in a compatible way, as explained in Section \ref{Sec-conditional}. 

\begin{lemma} 
\label{Lemma-prservation-state}
We have $\psi \circ \E=\psi$.
\end{lemma}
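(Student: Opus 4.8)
The plan is to compute $\psi_h(\E_h(x))$ for $x \in (M_h)_+$ by pushing everything into $\L^1(M)$ via the trace functional, and then to use that $P$ is a projection together with the self-duality afforded by the fact that $P$ is contractive on the duality pairing. First I would observe that, by definition, $\psi_h(\E_h(x)) = \tr\big(h^p \E_h(x)\big)$, and I want to relate this to $\tr(h^p x) = \psi_h(x)$. The natural bridge is the identity \eqref{relevement-E_h}, which says $P(h^{1/2} x h^{1/2}) = h^{1/2}\E_h(x) h^{1/2}$; applying $\tr$ and using $\tr(h^{1/2} \E_h(x) h^{1/2}) = \tr(h^{p-1} \cdot h^{1/2}\E_h(x)h^{1/2} \cdot h^{1-p} \cdot h^{p-1})$... this is getting circular, so instead I would work on the predual side.

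The cleaner route: consider the adjoint $P^* \co \L^{p^*}(M) \to \L^{p^*}(M)$, which is again a contractive $2$-positive projection (contractivity of $P^*$ follows from contractivity of $P$, and $P^{*2} = (P^2)^* = P^*$). Since $P$ fixes $h$, the element $h^{p-1} \in \L^{p^*}(M)_+$ is a natural candidate to be fixed by $P^*$, or at least to satisfy $P^*(h^{p-1}) \geq$ something comparable; more precisely I expect $\tr\big((P^*(h^{p-1}) - h^{p-1})\,k\big) = 0$ for all $k$ in the range of $P$, in particular for $k = h$, giving $\tr(P^*(h^{p-1}) h) = \tr(h^{p-1} h) = \tr(h^p) = \|h\|_p^p$. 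Then for $x \in M_h$,
\[
\psi_h(\E_h(x)) = \tr\big(h^p\,\E_h(x)\big) = \tr\big(h^{\frac{1}{2}}\,h^{p-1}\,h^{\frac{1}{2}}\,\E_h(x)\big),
\]
and I would like to convert $h^{\frac12} h^{p-1} h^{\frac12} \E_h(x)$-type expressions into $\tr\big(h^{p-1} \cdot h^{1/2}\E_h(x) h^{1/2}\big) = \tr\big(h^{p-1} P(h^{1/2} x h^{1/2})\big) = \tr\big(P^*(h^{p-1})\, h^{1/2} x h^{1/2}\big)$, using the trace property $\tr(ab)=\tr(ba)$ and \eqref{relevement-E_h}. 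The analogous computation without the $\E_h$ gives $\psi_h(x) = \tr(h^p x) = \tr\big(h^{p-1} h^{1/2} x h^{1/2}\big)$ only after commuting $h$ past $x$, which one cannot do directly; so I would instead run the whole argument with $x$ replaced by the ``$h$-sandwiched'' version and compare $\tr\big(P^*(h^{p-1})\,h^{1/2}xh^{1/2}\big)$ with $\tr\big(h^{p-1}\,h^{1/2}xh^{1/2}\big)$, which are equal precisely when $P^*(h^{p-1})$ and $h^{p-1}$ agree against $h^{1/2} x h^{1/2}$.

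Thus the crux is to show $P^*(h^{p-1}) = h^{p-1}$ (or at least that it agrees with $h^{p-1}$ on the relevant subspace $h^{1/2} M_h h^{1/2}$, which by density and Lemma~\ref{lemma2-GL} is the subspace $s(h)\L^p(M)s(h)$, on which $P$ already acts). I would prove $P^*(h^{p-1}) = h^{p-1}$ using a strict-convexity/strict-monotonicity argument on $\L^{p^*}$ entirely parallel to the one in Lemma~\ref{Lemma-faithful}: from $\tr(P^*(h^{p-1})\,h) = \|h\|_p^p = \|h^{p-1}\|_{p^*}\,\|h\|_p$ together with $\|P^*(h^{p-1})\|_{p^*} \leq \|h^{p-1}\|_{p^*}$, the equality case of Hölder's inequality in strictly convex $\L^{p^*}$ (for $1<p<\infty$) forces $P^*(h^{p-1})$ to be the unique norming functional, namely $h^{p-1}$ itself; the case $p=1$ is handled separately and is elementary since then $p^* = \infty$ and $P^*$ is a unital contractive positive projection on $M$ fixing the support pattern. \textbf{The main obstacle} I anticipate is precisely this identification $P^*(h^{p-1}) = h^{p-1}$: one must be careful that $P^*$ need not be $2$-positive in a way that makes it a ``nice'' projection, and the equality-in-Hölder argument requires knowing $P^*(h^{p-1})$ lies in the right support-reduced corner and is positive, which comes from positivity of $P^*$; also one should double-check the boundary interaction at $p=1$ where $h^{p-1} = s(h)$ and $P^*$ acts on $M$ — there $P^*(s(h)) = s(h)$ follows from $P^*$ being a unital (on the corner) positive contraction, but making ``unital on the corner $s(h)Ms(h)$'' precise requires the identification $s(h)\L^p(M)s(h) \cong \L^p(M_h)$ and that $P$ restricts there, which we already have in \eqref{Bonne-inclusion}.
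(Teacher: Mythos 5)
Your proposal follows essentially the same route as the paper: dualize, prove $P^*(h^{p-1})=h^{p-1}$ for $1<p<\infty$ from $\tr\big(hP^*(h^{p-1})\big)=\norm{h}_p\norm{h^{p-1}}_{p^*}$ plus contractivity and the uniqueness of the norming functional (smoothness of $\L^p$), and then transfer the identity $\tr\big(h^{p-1}P(k)\big)=\tr(h^{p-1}k)$ to $k=h^{\frac{1}{2}}xh^{\frac{1}{2}}$ via \eqref{relevement-E_h}, exactly as the paper does. The only caution is at $p=1$: you should not assert the full equality $P^*(s(h))=s(h)$ (which need not hold), but only the compressed identity $s(h)P^*(s(h))s(h)=s(h)$, obtained as in the paper from $0\leq P^*(s(h))\leq 1$, $h^{\frac{1}{2}}P^*(s(h))h^{\frac{1}{2}}\leq h$ and $\tr\big(hP^*(s(h))\big)=\tr(h)$, and this compressed version suffices since you only pair against elements of $s(h)\L^1(M)s(h)$.
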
  

\begin{proof}
Let $p^*$ be the exponent conjugate to $p$. Consider first the case where $1 < p < \infty$. Using the contractive dual map $P^* \co \L^{p^*}(\cal{M}) \to \L^{p^*}(\cal{M})$, we see that
\begin{align*}
\norm{h}_p^p 
&\ov{\eqref{Def-norm-Lp}}{=} \tr(h^p) 
= \tr(h h^{p-1}) 
= \tr\big((P(h) h^{p-1}\big)
= \tr\big(h P^*(h^{p-1})\big) \\
&\leq \norm{h}_p \bnorm{P^*(h^{p-1})}_{p^*} 
\leq \norm{h}_p \norm{h^{p-1}}_{p^*} = \norm{h}_p^p.
\end{align*}
Thus $\norm{P^*(h^{p-1})}_{p^*} = \norm{h^{p-1}}_{p^*}$ and $\tr \big(h P^* (h^{p-1})\big) = \norm{h}_p \norm{h^{p-1}}_{p^*}$. By \cite[Corollary 5.2]{PiX03}, the Banach space $\L^p(\cal{M}_h)$ is smooth. Hence, by uniqueness of the norming functional of $h$ (see \cite[Corollary 5.4.3 p.~481]{Meg98}) we obtain 
\begin{equation}
\label{equa-inter-400}
P^*(h^{p-1})
=h^{p-1}.
\end{equation}
For any $k \in \L^p(\cal{M})$, it follows that
\begin{equation}
\label{equa-inter-401}
\tr\big(h^{p-1}P(k)\big)
=\tr\big(P^*(h^{p-1})k\big)
\ov{\eqref{equa-inter-400}}{=}\tr(h^{p-1} k).
\end{equation}
In particular, for every $x \in \cal{M}_h$, we have
$$ 
\tr\big(h^{p-1}(h^{\frac{1}{2}}\E(x) h^{\frac{1}{2}})\big)
\ov{\eqref{relevement-E_h}}{=} \tr\big(h^{p-1}P(h^{\frac{1}{2}} x h^{\frac{1}{2}})\big)
\ov{\eqref{equa-inter-401}}{=} \tr\big(h^{p-1}(h^{\frac{1}{2}} x h^{\frac{1}{2}})\big)
$$
that is $\tr\big(h^p\E(x)\big)=\tr(h^p x)$ hence $\psi\big(\E(x)\big)=\psi(x)$.

%using the notation $u_h=s(h) P'(s(h)) s(h)$
In the case $p=1$, using the contractivity of $P^* \co \cal{M} \to \cal{M}$, we note that by \cite[1.6.9 p.~18]{Dix77}
$$
0 \leq P^*(s(h)) 
\leq \bnorm{P^*(s(h))}_\infty 1 
\leq \norm{s(h)}_\infty \leq  1.
$$
Hence $0 \leq s(h)P^*(s(h))s(h) \leq s(h)$. It follows that
\begin{equation}
\label{ineq:L1-case}
h^{\frac{1}{2}}P^*(s(h))h^{\frac{1}{2}}
= h^{\frac{1}{2}}s(h)P^*(s(h))s(h)h^{\frac{1}{2}}
\leq h^{\frac{1}{2}}s(h)h^{\frac{1}{2}}
=h
\end{equation}
However
$$ 
\tr\big(h^{\frac{1}{2}}P^*(s(h))h^{\frac{1}{2}}\big) 
= \tr(h P^*(s(h))) = \tr(P(h) s(h)) = \tr(h)
$$
hence
$$
\norm{h-h^{\frac{1}{2}}P^*(s(h))h^{\frac{1}{2}}}_1
=\tr\big(h-h^{\frac{1}{2}}P^*(s(h))h^{\frac{1}{2}}\big)
=0
$$
and the inequality in \eqref{ineq:L1-case} is in fact an equality, which implies by Lemma \ref{lemma2-GL}
\begin{align}
\label{P*-invar}
s(h)P^*(s(h))s(h)
=s(h).
\end{align}
Then for every $k \in s(h)\L^1(M)s(h)$ we have
\begin{align*}
\tr(P(k)) 
&= \tr(s(h)P(k)) 
= \tr(P^*(s(h))k) 
= \tr(P^*(s(h))s(h)ks(h)) \\
&=\tr(s(h)P^*(s(h))s(h)k)
\ov{\eqref{P*-invar}}{=} \tr(s(h)k)
= \tr(k).
\end{align*}
If $k=h^{\frac{1}{2}}y h^{\frac{1}{2}}$, with $y\in \cal{M}_h$, we obtain
\begin{align*}
\psi(\E(y))
=\tr\big(h^{\frac{1}{2}}\E y \,h^{\frac{1}{2}}\big) 
= \tr(P(h^{\frac{1}{2}}y h^{\frac{1}{2}})) 
=\tr (h^{\frac{1}{2}}y h^{\frac{1}{2}}) 
=\tr(hy)
\ov{\eqref{psih-def}}{=} \psi(y).
\end{align*}
We also deduce that $\psi \circ \E=\psi$ in this case.
\end{proof}

%Conversely, suppose that the conditions of Theorem \ref{main-th-ds-intro-bis} are satisfied. We can suppose $\norm{h}_p=1$. We introduce the reduced weight $\psi$ on the von Neumann algebra $\cal{M}_h \ov{\mathrm{def}}{=} s(h)\cal{M}s(h)$ induced by the state $\tr_{\varphi}(h^p\,\cdot)$.

In the sequel, we will use the density operator $D_{\psi} \in \L^1(\cal{M}_h,\psi)$ associated with the weight $\psi$ on $\cal{M}_h$. From \eqref{kappa}, we have a canonical map $\kappa$ which induces a complete order and isometric identification $\kappa \co \L^p(\cal{M}_h,\psi) \to \L^p(\cal{M}_h,\varphi_{s(h)})$ for all $p$. If $x \in \cal{M}_h$, using Lemma \ref{Lemme-trace coincide} in the third equality, we see that
\begin{equation}
\label{Magic-Formula}
\tr_{\psi}(D_{\psi} x) 
\ov{\eqref{HJX-1.13}}{=} \psi(x) 
\ov{\eqref{psih-def}}{=} \tr_{\varphi}(h^p x)
=\tr_{\varphi_{s(h)}}(h^p x)%\, \cdot
\ov{\eqref{Trace-preserving}}{=} \tr_{\psi}(\kappa^{-1}(h^p x))
=\tr_{\psi}(\kappa^{-1}(h)^p x).
\end{equation}
We conclude that
\begin{equation}
\label{Magic-formula-2}
D_{\psi}
=\kappa^{-1}(h)^p.
\end{equation}
With Lemma \ref{Lemma-prservation-state}, we can consider by \eqref{Map-extension-Lp} with $\cal{M}_h$ instead of $\cal{M}$ the completely positive contractive operator $\E_p \co \L^p(\cal{M}_h,\psi) \to \L^p(\cal{M}_h,\psi)$ induced by the conditional expectation $\E \co \cal{M}_h \to \cal{M}_h$ and defined by
\begin{equation}
\label{Def-esperance-h}
\E_p\big(D_{\psi}^{\frac{1}{2p}} x D_{\psi}^{\frac{1}{2p}}\big)
\ov{\eqref{Map-extension-Lp}}{=} D_{\psi}^{\frac{1}{2p}}\E(x)D_{\psi}^{\frac{1}{2p}},\quad x \in \cal{M}_h. 
\end{equation}
%For any $x \in \cal{M}_h$, note that
%\begin{equation*}
%\label{}
%Q_p^2\big(h_{\psi}^{\frac{1}{2p}} xh_{\psi}^{\frac{1}{2p}}\big)
%\ov{\eqref{Def-esperance-h}}{=} Q_p\big(h_{\psi}^{\frac{1}{2p}} Q(x)h_{\psi}^{\frac{1}{2p}}\big)
%\ov{\eqref{Def-esperance-h}}{=}h_{\psi}^{\frac{1}{2p}} Q^2(x)h_{\psi}^{\frac{1}{2p}}
%=h_{\psi}^{\frac{1}{2p}} Q(x)h_{\psi}^{\frac{1}{2p}}
%\ov{\eqref{Def-esperance-h}}{=} Q_p\big(h_{\psi}^{\frac{1}{2p}} xh_{\psi}^{\frac{1}{2p}}\big).
%\end{equation*}
%We deduce that $Q_p^2=Q_p$, i.e.~that the map $Q_p$ is a projection. 

For any $x \in \cal{M}_h$, we have
\begin{align*}
\MoveEqLeft
\kappa\E_p\kappa^{-1}(h^{\frac{1}{2}}xh^{\frac{1}{2}})       
=\kappa\E_p\big(\kappa^{-1}(h)^{\frac{1}{2}}x\kappa^{-1}(h)^{\frac{1}{2}}\big)  
\ov{\eqref{Magic-formula-2}}{=} \kappa\E_p\big(D_{\psi}^{\frac{1}{2p}}x\kappa^{-1}(h)D_{\psi}^{\frac{1}{2p}}\big)\\
&\ov{\eqref{Def-esperance-h}}{=} \kappa(D_{\psi}^{\frac{1}{2p}}\E(x)D_{\psi}^{\frac{1}{2p}}) 
\ov{\eqref{Magic-formula-2}}{=} h^{\frac{1}{2}}\E(x)h^{\frac{1}{2}}.
\end{align*}
By density with Lemma \ref{lemma2-GL}, we obtain
\begin{equation}
\label{sans-fin-3}
\kappa\E_p\kappa^{-1}
=P|_{s(h)\L^p(\cal{M})s(h)}.
\end{equation}
Since $s(h)$ belongs to the centralizer of $\varphi$, we have an order isometric identification of $\L^p(\cal{M}_h,\varphi_{s(h)})$ as a subspace of the Banach space $\L^p(\cal{M},\varphi)$. 
%\begin{align*}
%\MoveEqLeft
%P\kappa\big(h_{\psi}^{\frac{1}{2p}}xh_{\psi}^{\frac{1}{2p}}\big) 
%\ov{\eqref{Magic-formula-2}}{=} P\kappa\big(\kappa^{-1}(h)^{\frac{1}{2}}x \kappa^{-1}(h)^{\frac{1}{2}}\big)
%= P\big(h^{\frac{1}{2}}x h^{\frac{1}{2}}\big)
%\ov{\eqref{relevement-E_h-intro}}{=} h^{\frac{1}{2}}Q(x) h^{\frac{1}{2}} \\
%&\ov{\eqref{Magic-formula-2}}{=} \kappa(h_{\psi})^{\frac{1}{2p}}Q(x) \kappa(h_{\psi})^{\frac{1}{2p}} 
%=\kappa\big(h_{\psi}^{\frac{1}{2p}}Q(x) h_{\psi}^{\frac{1}{2p}}\big) \ov{\eqref{Def-esperance-h}}{=} \kappa Q_p\big(h_{\psi}^{\frac{1}{2p}}x h_{\psi}^{\frac{1}{2p}}\big).
%\end{align*}  
Note that the situation can be illustrated by following commutative diagram.
$$
\xymatrix @R=1cm @C=2cm{
\L^p(\cal{M},\varphi)\ar@{^{(}->}[r]^{P} &\L^p(\cal{M},\varphi)\\
\L^p\big(\cal{M}_h,\varphi_{s(h)}\big)\ar@{^{(}->}[u]  &  \L^p\big(\cal{M}_h,\varphi_{s(h)}\big)\ar@{^{(}->}[u]		\\
\L^p(\cal{M}_h,\psi)  \ar[u]^{\kappa}   \ar[r]^{\E_p} & \L^p(\cal{M}_h,\psi) \ar[u]_{\kappa} \\			
%     \L^p(M_h,\psi)  \ar[r]^{\E_{h,p}}\ar[u]^{\hat{\kappa}}&  \L^p(M_h,\psi) \ar[u]_{\hat{\kappa}}					
}
$$
Thus $P(\L^p(\cal{M}_h))$ is completely order isometrically isomorphic to the space $\L^p(\cal{N})$. Since $\L^p(\cal{N})$ is a $\cal{N}$-bimodule in $\L^p(\cal{M}_h)$, and the identification of $\L^p(\cal{M}_h)$ with $s(h)\L^p(\cal{M})s(h)$ respects the actions of $\cal{M}_h$, then $P(s(h)\L^p(\cal{M})s(h))$ is a $\cal{N}$-bimodule. Moreover, $P$ is $\cal{N}$-bimodular, that is 
$$
P(xky)
=xP(k)y, \quad  x,y \in \cal{N},\ k \in s(h)\L^p(\cal{M})s(h).
$$
This results simply from the $\cal{N}$-bimodularity of $\E_{p}$. Indeed, for any $x,y \in \cal{N}$ and any element $k \in s(h)\L^p(\cal{M})s(h)$, we have % by \cite[Prop. 2.3]{JuX03}.
\begin{align*}
\MoveEqLeft
P(xky)         
\ov{\eqref{sans-fin-3}}{=} \kappa\E_p\kappa^{-1}(xky) 
=\kappa\E_p(x\kappa^{-1}(k)y) 
\ov{\eqref{esp-cond-prop}}{=}\kappa (x\E_p(\kappa^{-1}(k))y) \\
&=x\kappa (\E_p(\kappa^{-1}(k)))y 
\ov{\eqref{sans-fin-3}}{=} xP(k)y.
\end{align*}

Note that the left supports $s_\ell(k)$ and $\sigma_\ell(k)$ of any element $k$ of $\L^p(\cal{N})$, viewed either as an element of $\L^p(\cal{M})$ or as an element of $\L^p(\cal{N})$, do coincide. Indeed we have clearly $\sigma_\ell(k) \geq s_\ell(k)$. Moreover if $r \ov{\mathrm{def}}{=}\sigma_\ell(k) - s_\ell(k)$, then $0 \leq \E(r) \leq \sigma_\ell(k)$ and $\E(r)k \ov{\eqref{esp-cond-prop}}{=} \E_{p}(rk)=0$, which imply $\E(r)=0$ and thus $r=0$ since $\E$ is faithful on $s(h)\cal{M}s(h)$. The same coincidence occurs of course for right supports.

Note that the projections of a $\sigma$-finite von Neumann algebra $\cal{M}$ are exactly the right (or left) supports of elements of $\L^p(\cal{M})$. Hence $\cal{N}$ is the von Neumann subalgebra of $\cal{M}$ generated by the right (resp. left) support projections of the elements of $P(s(h)\L^p(\cal{M})s(h))$. In particular $\cal{N}_h=\cal{N}$ only depends on $s(h)$, i.e.~$s(h_1)=s(h_2)$ implies $\cal{N}_{h_1}=\cal{N}_{h_2}$. 

\paragraph{The $\sigma$-finite case}
Here, we suppose that $\varphi$ is a normal faithful state on a $\sigma$-finite von Neumann algebra $\cal{M}$. With Theorem \ref{Prop-sh=sP}, we can consider a non-zero positive element $h$ of $\Ran P$ such that $s(P)=s(h)$. Using results of Section \ref{From-local-to-global}, we have
$$
\Ran P 
\ov{\eqref{Inter-200}}{=} P(s(h)\L^p(\cal{M})s(h))
=\kappa\E_p\kappa^{-1}(\L^p(\cal{M}_h)).
$$
 The linear map $\Phi \co \L^p(\cal{M}) \to \L^p(\cal{M}_h,\varphi_{s(h)})$, $y \mapsto s(h)ys(h)$ is completely positive. Using the last point of Theorem \ref{main-th-ds-intro} in the second equality, we obtain if $1 < p <\infty$ that
$$
\kappa \E_p\kappa^{-1}\Phi
\ov{\eqref{sans-fin-3}}{=} P|_{s(h)\L^p(\cal{M})s(h)}\Phi
=P
$$

%Note that the map $s(h)\cal{M}s(h) \to \C$, $s(h)xs(h) \mapsto \tr_{\varphi}(h^p x)$ is a faithful normal state on $s(h)\cal{M}s(h)$. Using the procedure \eqref{Extension-weight1}, we can consider a normal faithful state $\chi_h$ on $\cal{M}$ such that $s(h)$ belongs to the centralizer of $\chi_h$ and  the reduced weight $\psi_h \ov{\mathrm{def}}{=} (\chi_h)_{s(h)}$ on $\cal{M}_h \ov{\mathrm{def}}{=}s(h)\cal{M}s(h)$ satisfies
%\begin{equation}
%\label{}
%\psi_h(s(h)xs(h))
%=\tr_{\varphi}(h^p x), \quad x \in \cal{M}.
%\end{equation}
%If $\hat{\kappa} \co \L^1(\cal{M},\varphi) \to \L^1(\cal{M},\chi)$ is the canonical map, we obtain by \eqref{Trace-preserving} the equality
%\begin{equation}
%\label{}
%\psi_h(s(h)xs(h))
%=\tr_{\chi}(\hat{\kappa}(h^p x))
%=\tr_{\chi}(\hat{\kappa}(h)^p x),\quad x \in \cal{M}.
%\end{equation}
%By the results of Subsection \ref{Sec-a-local}, the range $\Ran P$ identifies to the noncommutative $\L^p$-spaces $\L^p(\cal{N}_h)$ of the von Neumann subalgebra $\cal{N}_{h}$ of $s(h)\cal{M}s(h)=s(P)\cal{M}s(P)$ generated by the right (resp. left) supports of the elements of $\Ran P$. 
%
%Moreover, the restriction of $P$ to $s(P)\L^p(\cal{M})s(P)$ identifies to a faithful normal conditional expectation in this reduced $\L^p$-space, with range $\L^p(\cal{N}_h)$. 

%%%%%%%%%%%%%%%%%%%%%%%%%%%%%%%%%%%%%%%%%%%%%%%%%%%%%%%%%%%%%%%%%%%%%%%%%%%%%%%%%%%%%%%
%\subsection{The non $\sigma$-finite case.}
%\label{The-non-sigma}
%%%%%%%%%%%%%%%%%%%%%%%%%%%%%%%%%%%%%%%%%%%%%%%%%%%%%%%%%%%%%%%%%%%%%%%%%%%%%%%%%%%%%%%

\section{The non-$\sigma$-finite case: the set of supports of elements in the range of $P$}
\label{Carac-Nh}

Let $\mathcal P(P)$ be the set of all support projections of positive elements in $\Ran P$. If $s \in \cal{P}(P)$ we  define $\ul_s$ as the von Neumann subalgebra of $s\cal{M}s$ generated by the projections $e \in \cal{P}(P)$ with $e\leq s$. The link with the subalgebras $\cal{N}=\cal{N}_h$ defined in section \ref{Sec-a-local} is given by the following lemma.

\begin{lemma}
\label{N-equality}
For every $h \in \Ran P$ we have $\ul_{s(h)}=\cal{N}_h$. 
\end{lemma}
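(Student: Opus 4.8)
The plan is to prove the two inclusions $N_{s(h)}\subseteq N_h$ and $N_h\subseteq N_{s(h)}$ separately (we take $h\geq 0$, as required for $s(h)$ and $N_h$ to make sense). Throughout I will use two facts that are already available. First, $P\big(s(h)\L^p(M)s(h)\big)=\Ran(P)\cap s(h)\L^p(M)s(h)$: the inclusion $\supseteq$ holds because such an element is fixed by $P$, and the inclusion $\subseteq$ is \eqref{Bonne-inclusion}. Second, as observed at the end of Subsection~\ref{Sec-a-local}, $N_h$ is the von Neumann subalgebra of $M_h=s(h)Ms(h)$ generated by the left (equivalently, right) supports, computed in $\L^p(M)$, of the elements of $P\big(s(h)\L^p(M)s(h)\big)$; moreover, under the identification $P\big(s(h)\L^p(M)s(h)\big)=\L^p(N_h)$ these supports agree with the ones computed in $\L^p(N_h)$.

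For $N_{s(h)}\subseteq N_h$: a generator of $N_{s(h)}$ is a projection $e\in\mathcal P(P)$ with $e\leq s(h)$, so $e=s(g)$ for some positive $g\in\Ran(P)$. Since $e\leq s(h)$ we have $g=s(g)gs(g)=ege\in s(h)\L^p(M)s(h)$, hence $g=P(g)\in P\big(s(h)\L^p(M)s(h)\big)$; and $e=s(g)=s_r(g)$ is the right support of an element of $P\big(s(h)\L^p(M)s(h)\big)$, so $e\in N_h$. Passing to the generated von Neumann algebra gives $N_{s(h)}\subseteq N_h$.

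For $N_h\subseteq N_{s(h)}$: it is enough to show that for every $k\in P\big(s(h)\L^p(M)s(h)\big)$ both $s_r(k)$ and $s_\ell(k)$ lie in $N_{s(h)}$. Write the polar decomposition $k=u|k|$ and view $k$ inside $\L^p(N_h)$; Terp's polar-decomposition criterion recalled in Subsection~\ref{Haagerup-noncommutative}, applied to the von Neumann algebra $N_h$ instead of $M$, forces $u\in N_h$ and $|k|\in\L^p(N_h)$. Then $|k^*|=u|k|u^*=(kk^*)^{\frac{1}{2}}$ also belongs to $\L^p(N_h)$, since $\L^p(N_h)$ is an $N_h$-bimodule. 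Both $|k|$ and $|k^*|$ are positive elements of $\L^p(N_h)=P\big(s(h)\L^p(M)s(h)\big)\subseteq\Ran(P)$, with $s(|k|)=s_r(k)\leq s(h)$ and $s(|k^*|)=s_\ell(k)\leq s(h)$. Hence $s_r(k)$ and $s_\ell(k)$ belong to $\mathcal P(P)$ and are majorised by $s(h)$, i.e.\ they are among the generators of $N_{s(h)}$; taking the generated algebra yields $N_h\subseteq N_{s(h)}$, and together with the previous step this gives $N_{s(h)}=N_h$.

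The one step that needs genuine care is the last one: one must make sure that the partial isometry in the polar decomposition of $k$ stays inside $N_h$ (and not merely inside $M$), so that $|k^*|$ remains in the corner $\L^p(N_h)$ and hence in $\Ran(P)$. This is exactly Terp's criterion applied internally to $N_h$, and it is what allows the possibly non-selfadjoint $k$ to be replaced by the two positive elements $|k|,|k^*|$ of $\Ran(P)$ realising its right and left supports.
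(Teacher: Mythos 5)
Your proof is correct, and your first inclusion $N_{s(h)}\subseteq N_h$ is exactly the paper's argument: a generator $e=s(g)$ with $e\leq s(h)$ gives $g=ege\in s(h)\L^p(M)s(h)$, hence $g\in P\big(s(h)\L^p(M)s(h)\big)$ and $e\in N_h$ by the support description of $N_h$ from Subsection \ref{Sec-a-local}. For the reverse inclusion you take a genuinely different route. The paper argues on arbitrary projections $e\in N_h$: since $e\leq s(h)$, \eqref{support-properties} gives $s(ehe)=e$, and the $N_h$-bimodularity of $P$ on $s(h)\L^p(M)s(h)$ gives $ehe=eP(h)e=P(ehe)\in\Ran P$, so $e\in\mathcal P(P)$ and $e\in N_{s(h)}$; since $N_h$ is generated by its projections, $N_h\subseteq N_{s(h)}$ follows in two lines, with no appeal to polar decompositions. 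You instead work only with the generating family of supports of elements $k\in P\big(s(h)\L^p(M)s(h)\big)$ and replace a non-positive $k$ by the positive elements $|k|$ and $|k^*|=u|k|u^*$ of $\L^p(N_h)\subseteq\Ran P$, which requires Terp's criterion applied internally to $N_h$ (so that $u\in N_h$) together with the compatibility of moduli and polar parts under the identification $P\big(s(h)\L^p(M)s(h)\big)\cong\L^p(N_h)$ and the embedding $\L^p(N_h)\subset\L^p(M_h)$. These compatibilities do hold (the identification is a restriction of the $*$-isomorphism $\hat\kappa$ fixing $M$, and the embedding of Subsection \ref{Sec-conditional} respects the $*$-algebra operations and supports, as noted at the end of Subsection \ref{Sec-a-local}), so your argument is sound; it is just heavier than needed. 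The paper's $ehe$ trick buys brevity and avoids the polar-decomposition bookkeeping, while your version has the mild advantage of never invoking projections of $N_h$ other than the supports that define it; you could also have shortened your second half by applying your own reduction to positive elements of $\L^p(N_h)$ only (every projection of the $\sigma$-finite algebra $N_h$ is the support of such an element), which essentially recovers the paper's proof.
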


\begin{proof}
Let $e \in \cal{N}_h$ be a projection. We have $e \leq s(h)$, hence the support $s(ehe)$ of $ehe$ is equal to $e$. Since $P(s(h)\L^p(\cal{M})s(h))$ is $\cal{N}_h$-bimodular, we have $P(ehe)=eP(h)e=ehe$.  So we have $ehe \in \Ran P$. Thus $e \in \cal{P}(P)$ and since $e \leq s(h)$ we see that $e \in \ul_{s(h)}$. Since the von Neumann algebra $\cal{N}_h$ is generated by its projections, it follows that $\cal{N}_h \subset \ul_{s(h)}$. 

Conversely if  $k \in \Ran P$ is positive with $s(k) \leq s(h)$, then $k=P(k)= P(s(h)ks(h))$, hence $k \in P(s(h)\L^p(\cal{M})s(h))$ and thus $s(k) \in \cal{N}_h$. Since $\ul_{s(h)}$ is generated by such projections, it follows that $\ul_{s(h)} \subset \cal{N}_h$.
\end{proof}

\begin{lemma}
If $s_1,s_2 \in \cal{P}(P)$ then the projection $s_1 \vee s_2$ belongs to $\cal{P}(P)$. 
\end{lemma}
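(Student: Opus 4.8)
The plan is to exhibit the join $s_1\vee s_2$ explicitly as the support of a positive element of $\Ran(P)$, built by simply adding witnesses. Choose $h_1,h_2\in\Ran(P)_+$ with $s(h_i)=s_i$ and set $h\ov{\mathrm{def}}{=}h_1+h_2$. The claim then splits into two points: that $h\in\Ran(P)_+$, and that $s(h)=s_1\vee s_2$.

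First I would check membership. Since $\L^p(M)$ is a linear subspace of $\L^0(\mathcal M,\tau)$ closed under addition and $h_1,h_2\geq 0$, we have $h\in\L^p(M)_+$; and since $P$ is linear with $P(h_i)=h_i$, we get $P(h)=P(h_1)+P(h_2)=h_1+h_2=h$, so $h\in\Ran(P)_+$ and $s(h)\in\mathcal P(P)$. It therefore remains to identify $s(h)$.

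Next, the support computation $s(h_1+h_2)=s(h_1)\vee s(h_2)$, which I would do by a double inequality. For ``$\geq$'': from $0\leq h_1\leq h_1+h_2$ and $0\leq h_2\leq h_1+h_2$ the order characterization \eqref{Ine-measurable} gives $\dom(h_1+h_2)^{\frac12}\subset\dom h_i^{\frac12}$, hence $\ker(h_1+h_2)\subset\ker h_i$, i.e. $s(h_i)\leq s(h_1+h_2)$; thus $s_1\vee s_2\leq s(h_1+h_2)$. For ``$\leq$'': put $e\ov{\mathrm{def}}{=}s_1\vee s_2\in M$; since $s_i\leq e$ we have $e h_i e=h_i$, whence $e(h_1+h_2)e=h_1+h_2$, and a positive $\tau$-measurable operator $a$ satisfying $eae=a$ has $(1-e)H$ contained in $\ker a^{\frac12}=\ker a$, so $s(a)\leq e$; applying this to $a=h_1+h_2$ yields $s(h_1+h_2)\leq e=s_1\vee s_2$. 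Combining the two inequalities, $s(h)=s_1\vee s_2$, which completes the argument.

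There is no real obstacle here: the one point requiring a little care is that all these manipulations take place with (possibly unbounded) $\tau$-measurable operators rather than bounded ones, but this is routine given the order characterization \eqref{Ine-measurable} and the elementary properties of left/right supports recalled in Subsection \ref{Haagerup-noncommutative}; in particular all the support projections involved lie in $M$.
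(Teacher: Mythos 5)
Your proof is correct and follows exactly the paper's route: take positive witnesses $h_1,h_2$, note $h_1+h_2$ is again a fixed point of $P$, and use $s(h_1+h_2)=s(h_1)\vee s(h_2)$. The paper simply quotes this last support identity as standard, whereas you verify it; the extra detail is fine.
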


\begin{proof}
We can write $s_i=s(h_i)$ for some positive element $h_i$ of $\Ran P$. Then the positive element $h=h_1+h_2$ belongs to $\Ran P$ and $s(h_1+h_2)= s(h_1) \vee s(h_2)$. 
\end{proof}

We denote by $\cal{N}_P$ the weak* closed $*$-algebra generated by $\cal{P}(P)$.

\begin{lemma}
\label{proj-in-Ns}
i) If $s \in \cal{P}(P)$ and $e \in \ul_s$ is a projection, then $e \in \cal{P}(P)$.\hfill\break
ii) if $s_1,s_2 \in \cal{P}(P)$ with $s_2 \leq s_1$ then $s_2 \in \ul_{s_1}$ and $\ul_{s_2}=s_2\ul_{s_1}s_2$.
\hfill\break
iii) for not necessarily comparable $s_1,s_2 \in \cal{P}(P)$ we have still $s_2\ul_{s_1}s_2 \subset \ul_{s_2}$.
\end{lemma}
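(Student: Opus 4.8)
The plan is to prove the three assertions in the order (i) $\to$ (ii) $\to$ (iii): the nontrivial inclusion in (ii) will invoke (i), and (iii) will be obtained from (ii) by passing to $s_1\vee s_2$. The only substantial point is (i); the rest is bookkeeping with corners of von Neumann algebras.

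For (i) I would reuse the argument already present in the proof of Lemma \ref{N-equality}. Write $s=s(h)$ for a positive $h\in\Ran P$; by Lemma \ref{N-equality} we have $N_s=N_h$. A projection $e\in N_h$ lies in $s(h)Ms(h)$, so $e=s(h)es(h)\leq s(h)$, whence $s(ehe)=e$ by \eqref{support-properties}. Since $P$ is $N_h$-bimodular on $s(h)\L^p(M)s(h)$ and $P(h)=h$, one has $ehe=eP(h)e=P(ehe)\in\Ran P$; thus $ehe$ is a positive element of $\Ran P$ with support $e$, i.e.\ $e\in\mathcal P(P)$. This is the crux of the whole lemma; the remaining two parts are purely formal, so the main obstacle, such as it is, is simply recognising that (i) already contains everything nontrivial.

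For (ii): since $s_2\in\mathcal P(P)$ and $s_2\leq s_1$, the projection $s_2$ is one of the generators of $N_{s_1}$, hence $s_2\in N_{s_1}$; consequently $s_2N_{s_1}s_2$ is again a von Neumann algebra (a corner of $N_{s_1}$) and is contained in $N_{s_1}$. For $N_{s_2}\subseteq s_2N_{s_1}s_2$ I would observe that each generator $f$ of $N_{s_2}$ — a projection $f\in\mathcal P(P)$ with $f\leq s_2\leq s_1$ — lies in $N_{s_1}$ and equals $s_2fs_2$. For the reverse inclusion I would use that every von Neumann algebra is generated by its projections, so it suffices to put each projection $q\in s_2N_{s_1}s_2$ into $N_{s_2}$: such a $q$ is a projection of $N_{s_1}$, hence $q\in\mathcal P(P)$ by (i), and $q=s_2qs_2\leq s_2$, so $q$ is a generator of $N_{s_2}$. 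This gives $N_{s_2}=s_2N_{s_1}s_2$.

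Finally, for (iii), set $s=s_1\vee s_2$, which belongs to $\mathcal P(P)$ by the preceding lemma. Applying (ii) to the comparable pair $s_1\leq s$ gives $N_{s_1}=s_1N_ss_1\subseteq N_s$ (using $s_1\in N_s$), and applying (ii) to $s_2\leq s$ gives $s_2N_ss_2=N_{s_2}$. Combining these, $s_2N_{s_1}s_2\subseteq s_2N_ss_2=N_{s_2}$, which is the assertion.
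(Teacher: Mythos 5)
Your proof is correct and follows essentially the same route as the paper: part (i) is exactly the first half of the proof of Lemma \ref{N-equality} (support of $ehe$ is $e$ plus $N_h$-bimodularity of $P$), part (ii) uses that $s_2N_{s_1}s_2$ is a corner of $N_{s_1}$ whose projections lie in $\mathcal P(P)$ by (i) and are majorized by $s_2$, and part (iii) passes to $s_1\vee s_2$ and applies (ii), just as in the paper.
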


\begin{proof}
i) If $e \in \ul_s$ then $e \in \cal{N}_h$ for some $h$ with $s(h)=s$. The first part of the proof of Lemma \ref{N-equality} gives the conclusion.
%(clearly $s(ehe)\le e$; on the other hand $e-s(ehe)$ is a projection $r\le e$ such that $rhr=0$; since $|h^{1/2}r |^2=rhr=0$ we have $h^{1/2}r=0$ and then  $hr=h^{1/2}h^{1/2}r =0$ ; in view of $r\le s(h)$ this implies $r=0$). 

\smallskip\noindent
ii) It follows from the definition of the algebras $\ul_s$ that $\ul_{s_2} \subset \ul_{s_1}$ whenever $s_2\leq s_1$. Then clearly $\ul_{s_2}=s_2\ul_{s_2}s_2 \subset s_2\ul_{s_1}s_2$.  Conversely every projection $e$ of the reduced von Neumann algebra $s_2\ul_{s_1}s_2$ belongs to $\ul_{s_1}$, hence to $\cal{P}(P)$ by (i), and is majorized by $s_2$, thus $e \in \ul_{s_2}$. It follows that $s_2\ul_{s_1}s_2 \subset \ul_{s_2}$.

\smallskip\noindent
iii) In this case we have $s_2\ul_{s_1}s_2 \subset s_2\ul_{s_1\vee s_2}s_2=\ul_{s_2}$.
\end{proof}
%the family $(\ul_s)_{s \in \cal{P}(P)}$ is nested by inclusion, so that

It follows from the two preceding lemmas that $\cal{N}_P=\ovl{\bigcup\limits_{s \in \cal{P}(P)} \ul_s}^{\w^*}$. Here is a more tractable definition. Recall that $s(P) \ov{\eqref{support-s(P)}}{=} \vee_{h \in \Ran P, h \geq 0} s(h)$ is the supremum of all projections in $\cal{P}(P)$. 

\begin{lemma}
The algebra $\cal{N}_P$ is the subset of $s(P)\cal{M}s(P)$ consisting of elements $x$ such that $sxs \in \ul_s$ for every $s \in \cal{P}(P)$.
\end{lemma}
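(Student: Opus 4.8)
The plan is to prove the asserted set equality by two inclusions. Write
$D \ov{\mathrm{def}}{=} \{x \in s(P)Ms(P) : sxs \in N_s \text{ for every } s \in \mathcal{P}(P)\}$, so that the goal is $N_P = D$.

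First I would check $N_P \subseteq D$. The key observation is that $D$ is weak* closed: for each fixed $s \in \mathcal{P}(P)$ the map $x \mapsto sxs$ is normal, hence weak* continuous, and $N_s$ is a von Neumann algebra, so the set $\{x \in s(P)Ms(P) : sxs \in N_s\}$ is weak* closed; then $D$, being the intersection of these sets over $s \in \mathcal{P}(P)$, is weak* closed. Next, every $N_t$ (for $t \in \mathcal{P}(P)$) is contained in $D$: indeed $N_t \subseteq tMt \subseteq s(P)Ms(P)$, and for any $s \in \mathcal{P}(P)$ and any $x \in N_t$ one has $sxs \in sN_ts \subseteq N_s$ by Lemma \ref{proj-in-Ns}(iii) (applied with $s_1 = t$ and $s_2 = s$). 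Hence $\bigcup_{s \in \mathcal{P}(P)} N_s \subseteq D$, and since $N_P$ is, as noted just before the statement, the weak* closure of that union, the inclusion $N_P \subseteq D$ follows.

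For the reverse inclusion $D \subseteq N_P$, I would exploit that $\mathcal{P}(P)$ is upward directed — it is stable under finite suprema by the lemma proved just above — and that $s(P) = \sup \mathcal{P}(P)$ by definition; consequently $(s)_{s \in \mathcal{P}(P)}$ is an increasing net of projections converging strongly to $s(P)$. Given $x \in D$, the net $(sxs)_{s \in \mathcal{P}(P)}$ is bounded by $\norm{x}_\infty$ and, since multiplication is jointly strongly continuous on bounded sets, it converges strongly, a fortiori weak*, to $s(P)xs(P) = x$. Each $sxs$ lies in $N_s \subseteq N_P$ and $N_P$ is weak* closed, whence $x \in N_P$.

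No genuine obstacle is anticipated; the only points requiring some care are the correct assignment of the projections $s_1, s_2$ when invoking Lemma \ref{proj-in-Ns}(iii) in the first inclusion, and, in the second, the routine facts that an upward directed net of projections converges strongly to its supremum and that $s \mapsto sxs$ is strongly continuous along such a bounded net.
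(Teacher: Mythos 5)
Your proof is correct and follows essentially the same route as the paper: both inclusions are obtained exactly as in the paper's proof, namely weak* closedness of the set $D$ together with Lemma \ref{proj-in-Ns}(iii) (so $D \supset \bigcup_s N_s$, hence $D \supset N_P$), and then weak* convergence $sxs \to s(P)xs(P)=x$ along the directed set $\mathcal P(P)$ to get $D \subset N_P$. The only cosmetic difference is your justification of that convergence (strong convergence of the increasing net of projections to its supremum plus joint strong continuity of multiplication on bounded sets, then strong $\Rightarrow$ weak* for bounded nets), whereas the paper invokes the norm convergence $s\varphi \to s(P)\varphi$ in $M_*$; both are valid.
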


\begin{proof}
Since each algebra $\ul_s$ is included in $s(P)\cal{M}s(P)$ and the latter algebra is weak* closed in $\cal{M}$, the whole algebra $\cal{N}_P$ is also included in $s(P)\cal{M}s(P)$. The set 
$$
A 
\ov{\mathrm{def}}{=} \big\{x \in s(P)\cal{M}s(P): sxs \in  \ul_s \hbox{ for all } s \in \cal{P}(P)\big\}
$$ 
is weak* closed and contains all the $\ul_s$'s with $s \in \cal{P}(P)$, thus $A$ contains $\cal{N}_P$. For every $x \in A$ and $s \in \cal{P}(P)$, we have $sxs \in \ul_s \subset \cal{N}_P$. Then $sxs \to s(P)xs(P)=x$ for the weak* topology when $s \uparrow s(P)$. 
%(as a consequence of the fact that $s\varphi \to s(P)\varphi$ in the norm of $\cal{M}_*$, for every $\varphi \in \cal{M}_*$)
 Thus $x \in \cal{N}_P$, and $A \subset \cal{N}_P$.
\end{proof}

\begin{lemma}
The map $P$ is $\cal{N}_P$-bimodular on $s(P)\L^p(\cal{M})s(P)$, and consequently $\Ran P$ is a $\cal{N}_P$-bimodule.
\end{lemma}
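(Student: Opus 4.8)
The plan is to transfer the local $N_h$-bimodularity of $P$ on $s(h)\L^p(M)s(h)$ obtained in Section~\ref{Sec-a-local} up to the global algebra $N_P$, exploiting that $\bigcup_{s\in\mathcal P(P)}N_s$ is a weak$^*$-dense $*$-subalgebra of $N_P$ and that $\bigcup_{s\in\mathcal P(P)}s\L^p(M)s$ is norm-dense in $s(P)\L^p(M)s(P)$. Two soft facts will be used repeatedly: (i) for a decreasing net of projections $e_\alpha\downarrow 0$ in $M$ and any $w\in\L^p(M)$ one has $\|e_\alpha w\|_p\to 0$ and $\|we_\alpha\|_p\to 0$ (order continuity of the $\L^p$-norm, $1\leq p<\infty$); consequently, if $s\uparrow s(P)$ runs over $\mathcal P(P)$ — which is upward directed since $s_1\vee s_2\in\mathcal P(P)$ — and $k\in s(P)\L^p(M)s(P)$, then $\|sks-k\|_p\to 0$; (ii) for fixed $k\in\L^p(M)$ the map $M\to\L^p(M)$, $x\mapsto xk$ is continuous from the weak$^*$ topology of $M$ to the weak topology of $\L^p(M)$, because $\langle xk,\ell\rangle=\langle x,k\ell\rangle$ for $\ell\in\L^{p^*}(M)$, with $k\ell\in\L^1(M)=M_*$.

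\emph{Step 1: the module variable in $\bigcup_{s}N_s$.} Fix $x\in N_{s_0}$ with $s_0\in\mathcal P(P)$, and $k\in s(P)\L^p(M)s(P)$. For $s\in\mathcal P(P)$ with $s\geq s_0$, Lemma~\ref{proj-in-Ns}(ii) gives $s_0\in N_s$ and $N_{s_0}=s_0N_ss_0\subset N_s$, so $x\in N_s$ and $x=sxs$; writing $s=s(h)$ with $h\geq0$ in $\Ran P$, Lemma~\ref{N-equality} identifies $N_s=N_h$. Since $x(sks)=xks\in s\L^p(M)s=\L^p(sMs)$ and $P$ preserves this corner, the $N_h$-bimodularity of $P$ on it (Section~\ref{Sec-a-local}) yields
\[
P(xks)=P\big(x(sks)\big)=xP(sks).
\]
Now let $s\uparrow s(P)$ along $\{s\in\mathcal P(P):s\geq s_0\}$, which is cofinal in $\mathcal P(P)$ with supremum $s(P)$. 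By fact (i), $\|sks-k\|_p\to0$, hence $\|xks-xk\|_p\to 0$ and $\|P(sks)-P(k)\|_p\to0$; since $P$ is bounded, passing to the limit gives $P(xk)=xP(k)$.

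\emph{Step 2: extension to $N_P$, and the ``consequently''.} Let $x\in N_P$ and pick a net $x_\beta\in\bigcup_{s}N_s$ with $x_\beta\to x$ weak$^*$ in $M$ (possible since $\bigcup_sN_s$ is weak$^*$-dense in $N_P$). By fact (ii), $x_\beta k\to xk$ and $x_\beta P(k)\to xP(k)$ weakly in $\L^p(M)$. When $1<p<\infty$, $\L^p(M)$ is reflexive, so $P$ is weakly continuous; when $p=1$, $P$ is $\sigma$-weakly continuous — the same property invoked in the $p=1$ part of Lemma~\ref{Lemma-prservation-state}, where $P^*$ maps $M$ into $M$. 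In either case $P(x_\beta k)\to P(xk)$, and since $P(x_\beta k)=x_\beta P(k)$ by Step~1, we conclude $P(xk)=xP(k)$. The right-hand identity $P(kx)=P(k)x$ for $x\in N_P$ is obtained symmetrically. Combining the two: for $x,y\in N_P$ and $k\in s(P)\L^p(M)s(P)$ one has $ky\in s(P)\L^p(M)s(P)$, hence $P(xky)=xP(ky)=xP(k)y$, which is the asserted $N_P$-bimodularity of $P$ on $s(P)\L^p(M)s(P)$. Finally, any $h\in\Ran P$ lies in $s(P)\L^p(M)s(P)$ (decompose $h$ into its positive parts, apply $P$, and use that the support of a positive element of $\Ran P$ is dominated by $s(P)$); then for $x,y\in N_P$, $xhy=xP(h)y=P(xhy)\in\Ran P$, so $\Ran P$ is an $N_P$-bimodule.

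\emph{Main obstacle.} Everything above is soft once the local picture of Section~\ref{Sec-a-local} is in place; the only genuinely delicate point is the continuity of $P$ for the weak topology needed in Step~2. For $1<p<\infty$ it is free from reflexivity, but for $p=1$ it relies on $P$ being $\sigma$-weakly continuous, i.e.\ on the existence of the adjoint $P^*\co M\to M$ — precisely the fact already used in Lemma~\ref{Lemma-prservation-state}. The rest is bookkeeping with supports and with the directed family $(N_s)_{s\in\mathcal P(P)}$.
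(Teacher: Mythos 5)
Your argument is correct and is essentially the paper's proof: both reduce the module variables to some $N_{s_0}$, $s_0\in\mathcal P(P)$, via weak*-to-weak continuity of multiplication together with weak continuity of $P$, and reduce $k$ to a corner $s\L^p(M)s$ by norm density, then invoke the local $N_s$-bimodularity for $s$ large enough (the paper takes $s=s_0\vee e$, you let $s\uparrow s(P)$ — same device). The only remark is that the point you flag as "genuinely delicate" is not: any bounded operator is automatically weak-to-weak continuous, and for $p=1$ the adjoint $P^*\co M\to M$ is just the Banach-space adjoint of $P$ on $\L^1(M)=M_*$, so neither reflexivity nor any extra $\sigma$-weak continuity hypothesis is needed.
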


\begin{proof} 
We have to prove that $P(x_1hx_2)=x_1P(h) x_2$ for every $h \in s(P)\L^p(\cal{M})s(P)$ and $x_1,x_2 \in \cal{N}_P$. Clearly, we may reduce to the case where $x_1,x_2$ belong to some $\ul_{s_0}$ with $s_0 \in \cal{P}(P)$. Since $s(P)=\bigvee \cal{P}(P)$ we have 
$$
s(P) \L^p(\cal{M}) s(P)
=\ovl{\bigcup_{e\in \cal{P}(P)} e\L^p(\cal{M})e}
$$ 
(norm closure). So by another approximation argument we may assume that $h \in e\L^p(\cal{M})e$ with $e \in \mathcal P(P)$. Then, setting $s \ov{\mathrm{def}}{=} s_0 \vee e$, we have $x_1,x_2 \in \ul_s$ and $h\in s\L^p(\cal{M})s$. Since $P$ is bimodular with respect to $\ul_s$ on $s\L^p(\cal{M})s$, the equation $P(x_1hx_2)=x_1P(h) x_2$ follows, as well as the bimodule property. 
\end{proof}

Our goal is now to prove that $\Ran P$ is a suitable copy of $\L^p(\cal{N}_P)$ inside $s(P)\L^p(\cal{M})s(P)$. To this end we define a conditional expectation $\E_P$ from $s(P)\cal{M}s(P)$ onto $\cal{N}_P$ and an $\E_P$-invariant normal semifinite faithful weight on $s(P)\cal{M}s(P)$. A preliminary step will be to find a partition of $s(P)$ into projections belonging to $\mathcal P(P)$.

\begin{lemma}
\label{the lattice P(P)}
The set $\cal{P}(P)$ is closed under finite or countable unions, under arbitrary intersection and under relative orthocomplements.
\end{lemma}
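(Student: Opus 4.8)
The plan is to establish the three closure properties separately, using the lemmas just proved in this subsection: the stability of $\mathcal{P}(P)$ under the join of two elements (shown above), Lemma~\ref{proj-in-Ns}(i), which says that every projection of $N_s$ with $s\in\mathcal{P}(P)$ lies in $\mathcal{P}(P)$, and Lemma~\ref{proj-in-Ns}(ii), which gives $s_2\in N_{s_1}$ whenever $s_2\le s_1$ in $\mathcal{P}(P)$.

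For \emph{countable joins}, given $(s_n)_n\subset\mathcal{P}(P)$ I would write $s_n=s(h_n)$ with $h_n\in(\Ran P)_+$ and $\norm{h_n}_p\le 1$, and set $h\ov{\mathrm{def}}{=}\sum_{n}2^{-n}h_n$; this series converges in norm in $\L^p(M)$ to a positive element satisfying $P(h)=h$ (by linearity and continuity of $P$), so $h\in\Ran P$. The point to check is $s(h)=\bigvee_n s(h_n)$: from $0\le 2^{-n}h_n\le h$ one gets $s(h_n)\le s(h)$ for every $n$, while if $e\ov{\mathrm{def}}{=}\bigvee_n s(h_n)$ then $eh_ne=h_n$ for all $n$, and the boundedness of two-sided multiplication by $e$ on $\L^p(M)$ forces $ehe=h$, hence $s(h)\le e$. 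Thus $\bigvee_n s(h_n)=s(h)\in\mathcal{P}(P)$; joins of finitely many elements are already known.

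For \emph{relative orthocomplements}, if $s_2\le s_1$ with $s_1,s_2\in\mathcal{P}(P)$, then $s_1\in N_{s_1}$ (take $e=s_1$ in the definition of $N_{s_1}$) and $s_2\in N_{s_1}$ by Lemma~\ref{proj-in-Ns}(ii), so $s_1-s_2$ is a projection of $N_{s_1}$, and Lemma~\ref{proj-in-Ns}(i) gives $s_1-s_2\in\mathcal{P}(P)$. For \emph{meets of two elements} $s_1,s_2\in\mathcal{P}(P)$, I would set $s\ov{\mathrm{def}}{=}s_1\vee s_2\in\mathcal{P}(P)$; then $s_1,s_2\in N_s$ by Lemma~\ref{proj-in-Ns}(ii) (applied to $s_1\le s$ and $s_2\le s$), so $s_1\wedge s_2\in N_s$ (the meet of two projections of a von Neumann subalgebra agrees with their meet in $M$), and Lemma~\ref{proj-in-Ns}(i) yields $s_1\wedge s_2\in\mathcal{P}(P)$. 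For an arbitrary nonempty family $(s_i)_{i\in I}\subset\mathcal{P}(P)$ I would fix $i_0\in I$: each $s_i\wedge s_{i_0}$ lies in $\mathcal{P}(P)$ by the two-element case and is dominated by $s_{i_0}$, hence lies in $N_{s_{i_0}}$; since $\bigwedge_i s_i=\bigwedge_i(s_i\wedge s_{i_0})$ and the von Neumann algebra $N_{s_{i_0}}$ is stable under arbitrary meets of its projections, $\bigwedge_i s_i\in N_{s_{i_0}}$, whence $\bigwedge_i s_i\in\mathcal{P}(P)$ by Lemma~\ref{proj-in-Ns}(i). (When $M$ is not $\sigma$-finite the top projection $1_M$ need not belong to $\mathcal{P}(P)$, so ``arbitrary meets'' is understood for nonempty families.)

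The only genuinely analytic ingredient is the identity $s(\sum_n 2^{-n}h_n)=\bigvee_n s(h_n)$ entering the countable-join case; all the remaining arguments are lattice bookkeeping resting on Lemma~\ref{proj-in-Ns} and the already-established stability under binary joins, so I do not expect any real obstacle.
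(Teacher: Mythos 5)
Your proof is correct and follows essentially the same route as the paper: countable joins via the support of $\sum_n 2^{-n}h_n$ (you spell out the identity $s(h)=\bigvee_n s(h_n)$ that the paper only asserts), relative orthocomplements and arbitrary meets exactly as in the paper via Lemma \ref{proj-in-Ns}. The only minor deviation is the binary meet: the paper obtains $s_1\wedge s_2$ by De Morgan inside $s_1\vee s_2$ from the relative orthocomplements $e_1=s_1\vee s_2-s_1$, $e_2=s_1\vee s_2-s_2$, whereas you take the meet directly inside the von Neumann algebra $N_{s_1\vee s_2}$ and invoke Lemma \ref{proj-in-Ns}(i) --- an equally valid, slightly more direct variant of the same idea.
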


\begin{proof}
We have seen that if $(h_i)_{i \in I}$ is an at most countable family of normalized positive elements in $\Ran P$, then $\bigvee_{i \in I} s(h_i)$ is the support of the element $h=\sum_i 2^{-i} h_i$ of $\Ran P$. Thus $\cal{P}(P)$ is closed under finite or countable joins.

If $s,e \in \mathcal P(P)$ with $e \leq s$, then $e$ belongs to $\ul_s$ by the part 2 of Lemma \ref{proj-in-Ns} and so does its relative orthocomplement $s-e$. Since $s-e$ is a projection in $\ul_s$ it  belongs to $\cal{P}(P)$, by the part 1 of Lemma \ref{proj-in-Ns}. 

If $s_1,s_2 \in \cal{P}(P)$, then $e_1 \ov{\mathrm{def}}{=} s_1\vee s_2 -s_1$ and $e_2 \ov{\mathrm{def}}{=} s_1\vee s_2 -s_2$ belong both to $\cal{P}(P)$ the foregoing, and so does $s_1\wedge s_2=s_1\vee s_2- e_1\vee e_2$.

If $(s_i)_{i \in I}$ is an arbitrary family in $\cal{P}(P)$ and $e \ov{\mathrm{def}}{=}\bigwedge_{i \in I} s_i$, then fixing $i_0 \in I$ we have $e = \bigwedge_{i \in I} s'_i$, where $s'_i=s_i\wedge s_{i_0}$. The $s'_i$ belong all to $\cal{P}(P)$, and in addition $s'_i \leq s_{i_0}$ with $s_{i_0} \in \cal{P}(P)$. By the part 2 of Lemma \ref{proj-in-Ns}, we have $s'_i \in \ul_{s_0}$ for every $i \in I$, hence $e \in \ul_{s_0}$ and this projection belongs to $\cal{P}(P)$ by the same lemma.
\end{proof}

\begin{thm}
\label{Thm-Disjoint}
There exists a family $(s_i)_{i \in I}$ of pairwise disjoint projections in $\mathcal P(P)$ such that 
$$
s(P)
=\bigvee_{i \in I} s_i.
$$
\end{thm}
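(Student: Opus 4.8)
The plan is a maximality argument built on the lattice properties of $\mathcal{P}(P)$. First I would order by inclusion the collection of all sets of pairwise orthogonal nonzero projections contained in $\mathcal{P}(P)$; the empty set is a member and the union of any chain is again such a set, so Zorn's lemma yields a maximal family $(s_i)_{i\in I}$ of pairwise orthogonal projections in $\mathcal{P}(P)$. Set $s\ov{\mathrm{def}}{=}\bigvee_{i\in I} s_i$. Since each $s_i\leq s(P)$ we have $s\leq s(P)$, so the whole content of the theorem is the reverse inequality; I would assume, for contradiction, that $s\neq s(P)$. Then, by definition of $s(P)$ as the supremum of $\mathcal{P}(P)$, there is some $e\in\mathcal{P}(P)$ with $e\not\leq s$.

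Next I would exploit that every element of $\mathcal{P}(P)$ is $\sigma$-finite. Write $e=s(h)$ with $h$ a nonzero positive element of $\Ran P$; then $\omega\ov{\mathrm{def}}{=}\tr(h^p\,\cdot\,)$ is a nonzero normal positive functional on $M$ whose support is $s(h^p)=e$, so $e$ is $\sigma$-finite. As the $s_i$ are pairwise orthogonal, normality of $\omega$ gives $\sum_{i\in I}\omega(s_i)=\omega\big(\bigvee_{i\in I}s_i\big)\leq\omega(1)<\infty$, hence $F\ov{\mathrm{def}}{=}\{i\in I:\omega(s_i)>0\}$ is at most countable. For $i\notin F$ we have $\omega(s_i)=0$, which forces $s_i\leq 1-s(\omega)=1-e$, that is, $es_i=0$.

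Now put $s_F\ov{\mathrm{def}}{=}\bigvee_{j\in F}s_j$. This is a countable join of elements of $\mathcal{P}(P)$, hence $s_F\in\mathcal{P}(P)$ by Lemma~\ref{the lattice P(P)}; moreover $s_F\leq s$, so $e\not\leq s_F$. Consider $e'\ov{\mathrm{def}}{=}(e\vee s_F)-s_F$. Since $e,s_F\in\mathcal{P}(P)$, closure under finite joins gives $e\vee s_F\in\mathcal{P}(P)$, and since $s_F\leq e\vee s_F$ in $\mathcal{P}(P)$, closure under relative orthocomplements gives $e'\in\mathcal{P}(P)$; also $e'\neq 0$ because $e\not\leq s_F$, so $e\vee s_F\neq s_F$. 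Finally $e'\perp s_i$ for every $i\in I$: if $i\in F$ then $e'\leq 1-s_F\leq 1-s_i$; if $i\notin F$ then $e\leq 1-s_i$ (previous paragraph) and $s_F\leq 1-s_i$ (pairwise orthogonality of the family), hence $e\vee s_F\leq 1-s_i$ and a fortiori $e'\leq 1-s_i$. Thus $(s_i)_{i\in I}\cup\{e'\}$ is a strictly larger family of pairwise orthogonal projections in $\mathcal{P}(P)$, contradicting maximality. Therefore $s=s(P)$.

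The one delicate step is the passage from "$e\not\leq s$" to a nonzero member of $\mathcal{P}(P)$ orthogonal to \emph{all} the $s_i$ at once: naive candidates (a meet of $e$ with $1-s$, say) can vanish. The device that resolves it is the $\sigma$-finiteness of $e$, which comes precisely from its description as the support of the normal functional $\tr(h^p\,\cdot\,)$: it confines the relevant indices to a countable subfamily $F$, after which the cut-off $s_F$ already lies in $\mathcal{P}(P)$ and the closure properties of Lemma~\ref{the lattice P(P)} apply directly.
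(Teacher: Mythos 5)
Your proof is correct. It shares with the paper the overall skeleton — a maximal pairwise orthogonal family $(s_i)_{i\in I}$ in $\mathcal P(P)$, then a contradiction obtained by exhibiting a nonzero element of $\mathcal P(P)$ orthogonal to every $s_i$ — but the mechanism producing that element is genuinely different. The paper picks $e\in\mathcal P(P)$ with $e\neq e\wedge s$ and invokes the arbitrary-meet clause of Lemma \ref{the lattice P(P)} through the identity $e-e\wedge s=\bigwedge_{i\in I}(e-e\wedge s_i)$; this is very short, but both the identity and the orthogonality of the resulting projection to each $s_i$ require care, since the projection lattice is not distributive and the supports involved need not commute. You replace that lattice computation by a measure-theoretic device: writing $e=s(h)$ and using the normal functional $\omega=\tr(h^p\,\cdot\,)$ with support $e$, finiteness of $\omega(1)$ confines the indices with $s_ie\neq 0$ to a countable set $F$, after which only the countable-join and relative-orthocomplement clauses of Lemma \ref{the lattice P(P)} are needed, and the orthogonality of $e'=(e\vee s_F)-s_F$ to every $s_i$ is verified directly (via $e'\leq 1-s_F$ for $i\in F$, and $e\vee s_F\leq 1-s_i$ for $i\notin F$). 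What your route buys is exactly this robustness: it exploits the $\sigma$-finiteness of supports of $\L^p$-elements (a fact the paper records separately in part 2 of the Main Theorem), avoids the delicate arbitrary-meet identity, and leaves no unverified step in the maximality contradiction; what the paper's route buys is brevity, at the cost of a step that deserves more justification than it is given. The only cosmetic point in yours is the trivial case $F=\emptyset$, which your formula $e'=(e\vee s_F)-s_F=e$ already handles.
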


\begin{proof}
Consider a maximal family $(s_i)_{i \in I}$ of pairwise disjoint non-vanishing projections in  $\mathcal P(P)$. Clearly $s\ov{\mathrm{def}}{=} \bigvee_{i \in I} s_i \leq s(P)$. If $s \ne s(P)$ there exists $e \in \cal{P}(P)$ such that $e\neq e\wedge s$. However, by Lemma \ref{the lattice P(P)}, the projection $e-e\wedge s=\bigwedge\limits_{i \in I}(e-e\wedge s_i)$ belongs to the set $\cal{P}(P)$, which contradicts the maximality of the family $(s_i)_{i \in I}$.
\end{proof}

%%%%%%%%%%%%%%%%%%%%%%%%%%%%%%%%%%%%%%%%%%%%%%%%%%%%%%%%%%%%%%%%%%%%%%%%%%%%%%%%%%
\section{The non-$\sigma$-finite case: proof of the main theorem}
\label{End-of-the-end}

In this section, we complete the proof of the points 1 and 2 of Theorem \ref{main-th-ds-intro} in the non-$\sigma$-finite case.

Fix a family $(s_i)_{i \in I}$ of pairwise disjoint projections of the set $\mathcal P(P)$ as in Theorem \ref{Thm-Disjoint}. For each $i \in I$, let $h_i$ be a positive element of $\Ran P$ with support projection $s(h_i)=s_i$. Let $\psi_i\ov{\mathrm{def}}{=} \tr(h_i^p\, \cdot)$ be the normal positive linear form on the von Neumann algebra $\cal{M}$ associated with the positive element $h_i^p$ of $\L^1(\cal{M},\varphi)$. For each finite subset $J$ of $I$, let us set $s_J \ov{\mathrm{def}}{=}\sum_{i \in J} s_i$, $h_J \ov{\mathrm{def}}{=} \sum_{i \in J} h_i$ and let $\psi_J$ be the linear form associated with $h_J^p$.  Note that $s(h_J)=s_J$. Moreover, we have 
$$
P(h_J)
=P\bigg(\sum_{i \in J} h_i\bigg)
=\sum_{i \in J} P(h_i)=h_J
\quad\text{and} \quad 
h_J^p
=\sum_{i \in J} h_i^p.
$$
Hence $
\psi_J = \sum_{i \in J} \psi_i$. Let us also define a normal semifinite faithful weight $\psi$ on the von Neumann algebra $s(P)\cal{M}s(P)$ by
\begin{align}
\label{global-weight}
\psi(x) 
\ov{\mathrm{def}}{=} \sum_{i \in I} \psi_i(x)
=\sum_{i \in I} \psi_i(s_i x s_i).
\end{align}
All the projections $s_i$ for $i \in I$ belong to the centralizer of $\psi$, and more generally so do the projections $s_\Gamma  \ov{\mathrm{def}}{=} w^*\sdash\sum_{i \in \Gamma} s_i$ where $\Gamma$ is a subset of $I$.

By the results of Section \ref{Sec-a-local}, we have a normal faithful conditional expectation $\E_J \ov{\mathrm{def}}{=} \E_{h_J} \co s_J\cal{M}s_J \to s_J\cal{M}s_J$ satisfying
\begin{equation}
\label{relevement-final}
\psi_J
=\psi_J\circ \E_J
\quad \text{and} \quad %\hat{\kappa}_JP\hat{\kappa}_J^{-1}|\L^p(s_JMs_J) 
P\big(h_J^{\frac{1}{2}}xh_J^{\frac{1}{2}}\big)
\ov{\eqref{relevement-E_h}}{=} h_J^{\frac{1}{2}}\E_J (x)h_J^{\frac{1}{2}}.
\end{equation} 
Moreover, by Lemma \ref{N-equality}, the range of $\E_J$ does not depend on the choice of the $h_i$ and equals $\ul_{s_J}=s_J \cal{N}_P s_J$ (by definition of $\cal{N}_P$).

In the sequel we denote by $\mathcal F(I)$ the set of finite subsets of $I$. This set is naturally ordered by inclusion.
\begin{lemma}
\label{lem:E-compatibility}
The family $(\E_J)_{J \in\mathcal F(I)}$ of conditional expectations is compatible, that is if $J_1\subset J_2$ then $\E_{J_2}|_{s_{J_1}\cal{M}s_{J_1}}=\E_{J_1}$. Moreover, we have $\psi_{J_2}|_{s_{J_1}\cal{M}s_{J_1}}=\psi_{J_1}$.
\end{lemma}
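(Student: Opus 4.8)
The plan is to reduce everything to the characterizing relation \eqref{relevement-final} of the maps $\E_J$ together with the injectivity statement of Lemma~\ref{lemma2-GL}(1); the assertion about the weights will then be an immediate support computation.

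First I would record the only algebraic input. Since the projections $s_i$ ($i\in I$) are pairwise orthogonal and $s(h_i)=s_i$, the positive elements $h_i$, $i\in J_2$, have pairwise orthogonal supports, so $h_{J_2}^{\frac{1}{2}}=\sum_{i\in J_2}h_i^{\frac{1}{2}}$ and $h_i^{\frac{1}{2}}s_j=\delta_{ij}h_i^{\frac{1}{2}}$ for $j\in J_1$. Hence
$$
s_{J_1}h_{J_2}^{\frac{1}{2}}=h_{J_2}^{\frac{1}{2}}s_{J_1}=h_{J_1}^{\frac{1}{2}},
$$
so in particular $s_{J_1}=s(h_{J_1})\leq s(h_{J_2})=s_{J_2}$ and $s_{J_1}$ commutes with $h_{J_2}^{\frac{1}{2}}$. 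It follows that, for every $x\in s_{J_1}Ms_{J_1}$,
$$
h_{J_1}^{\frac{1}{2}}xh_{J_1}^{\frac{1}{2}}=h_{J_2}^{\frac{1}{2}}(s_{J_1}xs_{J_1})h_{J_2}^{\frac{1}{2}}=h_{J_2}^{\frac{1}{2}}xh_{J_2}^{\frac{1}{2}}.
$$

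Now fix $x\in s_{J_1}Ms_{J_1}\subset s_{J_2}Ms_{J_2}$. Applying \eqref{relevement-final} first with index set $J_1$ and then with $J_2$, and using the displayed identity above,
$$
h_{J_1}^{\frac{1}{2}}\E_{J_1}(x)h_{J_1}^{\frac{1}{2}}=P\big(h_{J_1}^{\frac{1}{2}}xh_{J_1}^{\frac{1}{2}}\big)=P\big(h_{J_2}^{\frac{1}{2}}xh_{J_2}^{\frac{1}{2}}\big)=h_{J_2}^{\frac{1}{2}}\E_{J_2}(x)h_{J_2}^{\frac{1}{2}}.
$$
Since $\E_{J_1}(x)\in N_{s_{J_1}}\subset s_{J_1}Ms_{J_1}$, applying the same identity with $\E_{J_1}(x)$ in place of $x$ turns the left-hand side into $h_{J_2}^{\frac{1}{2}}\E_{J_1}(x)h_{J_2}^{\frac{1}{2}}$. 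Therefore $h_{J_2}^{\frac{1}{2}}\E_{J_1}(x)h_{J_2}^{\frac{1}{2}}=h_{J_2}^{\frac{1}{2}}\E_{J_2}(x)h_{J_2}^{\frac{1}{2}}$ with both $\E_{J_1}(x)$ and $\E_{J_2}(x)$ lying in $s_{J_2}Ms_{J_2}=s(h_{J_2})Ms(h_{J_2})$, so Lemma~\ref{lemma2-GL}(1) applied to $h_{J_2}$ forces $\E_{J_1}(x)=\E_{J_2}(x)$. In particular $\E_{J_2}$ maps $s_{J_1}Ms_{J_1}$ into itself and coincides there with $\E_{J_1}$. For the weights, write $\psi_{J_2}=\sum_{i\in J_2}\psi_i$; for $i\in J_2\setminus J_1$ and $x\in s_{J_1}Ms_{J_1}$ we have $\psi_i(x)=\tr\big(h_i^p\, s_{J_1}xs_{J_1}\big)=0$, because $h_i^p$ has support $s_i$ orthogonal to $s_{J_1}$, hence $h_i^p s_{J_1}=0$; therefore $\psi_{J_2}(x)=\sum_{i\in J_1}\psi_i(x)=\psi_{J_1}(x)$.

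I do not expect a genuine obstacle here: the substantive content was already established in Subsection~\ref{Sec-a-local}, and what remains is bookkeeping with orthogonal supports. The one point that deserves care is to carry out the comparison at the level of $s_{J_2}$ rather than cutting everything down to $s_{J_1}$; proceeding this way makes the injectivity in Lemma~\ref{lemma2-GL}(1) applicable directly, so that one need not separately argue (for instance via $N_{s_{J_2}}$-bimodularity of $\E_{J_2}$) that $\E_{J_2}(s_{J_1}Ms_{J_1})\subset s_{J_1}Ms_{J_1}$ before drawing the conclusion.
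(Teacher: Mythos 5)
Your proof is correct and follows essentially the same route as the paper: use $s_{J_1}h_{J_2}^{1/2}=h_{J_2}^{1/2}s_{J_1}=h_{J_1}^{1/2}$ to rewrite $h_{J_1}^{1/2}xh_{J_1}^{1/2}=h_{J_2}^{1/2}xh_{J_2}^{1/2}$, apply the defining relation of $\E_{J_1}$ and $\E_{J_2}$ through $P$, and conclude by the injectivity of $y\mapsto h_{J_2}^{1/2}yh_{J_2}^{1/2}$ from Lemma \ref{lemma2-GL}. The only (harmless) additions are your explicit check of the weight statement, which the paper leaves implicit, and the decomposition $h_{J_2}^{1/2}=\sum_{i\in J_2}h_i^{1/2}$ where the paper just uses $h_{J_1}=s_{J_1}h_{J_2}$.
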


\begin{proof}
If $J_1\subset J_2$, then $h_{J_1}=s_{J_1}h_{J_2}=h_{J_2}s_{J_1}$, hence $h_{J_1}^{\frac{1}{2}}=s_{J_1}h_{J_2}^{\frac{1}{2}}=h_{J_2}^{\frac{1}{2}}s_{J_1}$ and for every $x \in s_{J_1} \cal{M}s_{J_1}$:
$$
h_{J_2}^{\frac{1}{2}}\E_{J_1}(x)h_{J_2}^{\frac{1}{2}}
=h_{J_1}^{\frac{1}{2}}\E_{J_1}(x)h_{J_1}^{\frac{1}{2}}
\ov{\eqref{relevement-final}}{=} P\big(h_{J_1}^{\frac{1}{2}}xh_{J_1}^{\frac{1}{2}}\big)
=P\big(h_{J_2}^{\frac{1}{2}}xh_{J_2}^{\frac{1}{2}}\big)
\ov{\eqref{relevement-final}}{=} h_{J_2}^{\frac{1}{2}}\E_{J_2}(x)h_{J_2}^{\frac{1}{2}}
$$
which implies $\E_{J_1}(x)=\E_{J_2}(x)$ since $\E_{J_1}(x)$ and $\E_{J_2}(x)$ belong to $s_{J_2}\L^p(\cal{M})s_{J_2}$. Hence the map $\E_{J_1}$ is the restriction of $\E_{J_2}$ to $s_1 \cal{M}s_1$. The other assertion is similar.
\end{proof}

Now, we want to extend the family $(\E_J)$ to a conditional expectation in $s(P)\cal{M}s(P)$. To this end we shall make use of the following lemma.

\begin{lemma}
\label{w*cv}
A bounded net $(x_\alpha)_{\alpha \in A}$ of elements of $s(P)\cal{M}s(P)$ converges w* if and only if the reduced nets $(s_J x_\alpha s_J)_\alpha$ where $J \in \cal{F}(I)$ are all convergent for the weak* topology.
\end{lemma}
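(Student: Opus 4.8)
The implication ``$\Rightarrow$'' is immediate: for each $J \in \mathcal F(I)$ the map $M_* \to M_*$, $\omega \mapsto s_J\omega s_J$, is linear and contractive, so its adjoint $z \mapsto s_J z s_J$ is weak*-continuous on $M$; hence $x_\alpha \to x$ weak* forces $s_Jx_\alpha s_J \to s_Jxs_J$ weak*. For the converse, put $C \ov{\mathrm{def}}{=} \sup_\alpha \norm{x_\alpha}_M < \infty$ and set $s_J \ov{\mathrm{def}}{=} \sum_{i \in J} s_i$ for $J \in \mathcal F(I)$; these projections increase along $\mathcal F(I)$ (ordered by inclusion) with $\bigvee_{J} s_J = \bigvee_{i\in I} s_i = s(P)$ by Theorem \ref{Thm-Disjoint}. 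Since every $x_\alpha$ lies in $s(P)Ms(P)$, for $\omega \in M_*$ we have $\omega(x_\alpha) = \omega(s(P)x_\alpha s(P)) = (s(P)\omega s(P))(x_\alpha)$, so we may and do assume $\omega = s(P)\omega s(P)$; for such $\omega$, $\norm{\omega - s_J\omega s_J}_{M_*} \to 0$ (the same norm-continuity property of $s \mapsto s\varphi s$ already used in the excerpt, resting on $\norm{q\varphi}^2 \leq \norm{\varphi}\,\varphi(q)$ for $\varphi \geq 0$ and $q$ a projection).

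Now I show that $(\omega(x_\alpha))_\alpha$ is Cauchy in $\C$. For all $\alpha,\beta$ and every $J \in \mathcal F(I)$,
$$
\bigl|\omega(x_\alpha) - \omega(x_\beta)\bigr| \leq 2C\,\norm{\omega - s_J\omega s_J}_{M_*} + \bigl|\omega(s_Jx_\alpha s_J) - \omega(s_Jx_\beta s_J)\bigr|.
$$
Given $\varepsilon > 0$, first fix $J$ with $2C\norm{\omega - s_J\omega s_J}_{M_*} < \varepsilon/2$; since by hypothesis the net $(s_Jx_\alpha s_J)_\alpha$ converges weak*, the scalar net $(\omega(s_Jx_\alpha s_J))_\alpha$ is Cauchy, so the last term is $< \varepsilon/2$ for $\alpha,\beta$ large. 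Hence $\Lambda(\omega) \ov{\mathrm{def}}{=} \lim_\alpha \omega(x_\alpha)$ exists for every $\omega \in M_*$; $\Lambda$ is linear with $|\Lambda(\omega)| \leq C\norm{\omega}$, so it defines an element of $(M_*)^* = M$, i.e. there is $x \in M$ with $\omega(x) = \lim_\alpha \omega(x_\alpha)$ for all $\omega \in M_*$ --- that is, $x_\alpha \to x$ weak*. Finally $\omega(x) = \lim_\alpha (s(P)\omega s(P))(x_\alpha) = (s(P)\omega s(P))(x) = \omega(s(P)xs(P))$ for every $\omega \in M_*$, so $x = s(P)xs(P) \in s(P)Ms(P)$, as required.

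\emph{Main obstacle.} There is no genuinely hard step here: the argument is a standard ``uniform Cauchy'' estimate, with weak*-compactness entering only through $(M_*)^* = M$. The sole point requiring a little care is the norm convergence $s_J\omega s_J \to \omega$ in $M_*$ for $\omega = s(P)\omega s(P)$, which follows from normality of positive functionals together with the Cauchy--Schwarz bound $\norm{q\omega}^2 \leq \norm{\omega}\,\omega(q)$; this fact has already been invoked in the excerpt (e.g. in the characterisation of $N_P$) and may simply be cited. Alternatively one can phrase the converse via weak*-compactness of the ball $\{z \in M : \norm{z}_M \leq C\}$: every weak* cluster point $x$ of $(x_\alpha)$ satisfies $s_Jxs_J = \lim_\alpha s_Jx_\alpha s_J$ for all $J$, hence is determined on each corner $s_JMs_J$, hence (letting $s_J \uparrow s(P)$ and using $x = s(P)xs(P)$) is unique --- and a bounded net in a compact space with a unique cluster point converges.
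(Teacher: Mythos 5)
Your proposal is correct, and your main argument takes a genuinely different route from the paper's. The paper's proof views the maps $P_J \co x \mapsto s_J x s_J$ as a commuting, separating family of w*-continuous projections on $s(P)Ms(P)$, picks a w*-cluster point $y$ of the bounded net (w*-compactness of balls), notes that $P_J(y)$ must coincide with the given w*-limit of $(P_J x_\alpha)_\alpha$ for every $J \in \mathcal F(I)$, and concludes that the cluster point is unique, hence is the limit --- this is precisely the alternative you sketch in your closing remark. Your primary argument instead works in the predual: after reducing to $\omega = s(P)\omega s(P)$ and using the norm convergence $s_J \omega s_J \to \omega$ in $M_*$ (the same fact the paper invokes, without proof, in the characterisation of $N_P$), you show that $(\omega(x_\alpha))_\alpha$ is Cauchy by a uniform $\varepsilon/2$-estimate with the bound $C$, and then identify the limiting functional with an element of $M = (M_*)^*$, finally checking $x = s(P)xs(P)$. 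Your route is slightly longer but quantitative and avoids cluster-point/compactness language altogether (beyond the duality $M=(M_*)^*$); the paper's route is shorter and isolates the ``separating family of w*-continuous projections'' principle, which it immediately reuses in Remark \ref{rem:local-def}. Both rest on the boundedness of the net and on $s_J \uparrow s(P)$, and both are complete proofs of Lemma \ref{w*cv}.
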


\begin{proof} Note first that the maps $P_J \co s(P) \cal{M}s(P)\to s(P)\cal{M}s(P)$, $x \mapsto s_Jxs_J$ are weak* continuous projections on $s(P)\cal{M}s(P)$, which commute, i.e.~$P_JP_K=P_KP_J=P_{J\cap K}$, with the convention $P_\emptyset =0$. Moreover $P_J(x) \to x$ for the weak* topology when $J \uparrow I$, which implies that the family $(P_J)_{J \in \cal{F}(I)}$ is separating ($P_J(x)=0$ for all $J \in \cal{F}(I)$ implies $x=0$).

If the net $(x_\alpha)_{\alpha\in A}$ converges for the weak* topology to $x$, it is clear by the continuity of $P_J$ that $(P_J(x_\alpha))_{\alpha}$ converges for the weak* topology to $P_J(x)$ for each $J \in \mathcal F(I)$. Conversely, assume that $P_J(x_\alpha) \to y_J $ for the weak* topology for every $J \in \cal{F}(I))$. Then if $y$ is a weak*-cluster-point of the net $(x_\alpha)$ (some exists by weak*-compactness of the balls), $P_J(y)$ is a weak*-cluster point of $(P_J(x_\alpha))$, hence coincides with the weak*-limit $ y_J$ of the net $(P_J(x_\alpha))_\alpha$. By the fact that the family $(P_J)$ is separating, the weak*-cluster point $y$ is unique, and thus it is the weak*-limit of the net $(x_\alpha)$. 
\end{proof}

\begin{remark} 
\label{rem:local-def} \normalfont
As a consequence of the preceding lemma, for every bounded family  $(x_J)_{J \in \cal{F}(I)}$ such that $x_J \in s_J\cal{M}s_J$, and  $x_K= P_K(x_J)$ for every $K,J \in \cal{F}(I)$ with $K \subset J$, then there is a unique $x \in s(P)\cal{M}s(P)$ such that $x_K=P_K(x)$ for all $J \in \cal{F}(I)$.
\end{remark}

\begin{proof}
Indeed apply the Lemma \ref{w*cv} to the nested family $(x_J)_{J \in \mathcal F(I)}$. Since for each $K\in\mathcal F(I)$ the net $(P_K(x_J))_{J \in \mathcal F(I)}$ is stationary (constant for $J \supset K$), the net $(x_J)$ is w*-convergent and its limit $x$ satisfies the relations $P_K(x)=w^*\sdash\lim\limits_J P_K(x)_J=x_K$, for all $K \in \mathcal F(I)$.
\end{proof}

\begin{lemma}
\label{global-expect}
 The compatible set $(\E_J)$ of maps has a unique normal extension $\E \co s(P)\cal{M}s(P) \to s(P)\cal{M}s(P)$. The map $\E$ is a normal faithful conditional expectation with range $\cal{N}_P$.    
\end{lemma}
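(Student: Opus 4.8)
The plan is to glue the $\E_J$ into a single normal map by means of Remark~\ref{rem:local-def}, and then to read off all the algebraic and order-theoretic properties from the corresponding local statements for the $\E_J$. First, for $x \in s(P)Ms(P)$ I would set $x_J \ov{\mathrm{def}}{=} \E_J(s_Jxs_J)\in s_JMs_J$ for $J\in\mathcal F(I)$ and check the compatibility $P_K(x_J)=x_K$ for $K\subseteq J$: since $s_K\leq s_J$ we have $s_K\in N_{s_J}$ by Lemma~\ref{proj-in-Ns}, and as $\E_J$ is a conditional expectation onto $N_{s_J}$ it is $N_{s_J}$-bimodular, so $P_K(x_J)=s_K\E_J(s_Jxs_J)s_K=\E_J(s_Kxs_K)$, which equals $\E_K(s_Kxs_K)=x_K$ by Lemma~\ref{lem:E-compatibility}. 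Since $\norm{x_J}\leq\norm{x}$, Remark~\ref{rem:local-def} then produces a unique $\E(x)\in s(P)Ms(P)$ with $P_K\E(x)=x_K$ for all $K$, equivalently $\E(x)=\w^*\sdash\lim_J x_J$. This $\E$ is clearly linear, satisfies $\norm{\E}\leq1$ and $P_J\circ\E=\E_J\circ P_J$, and — running the same computation with an enclosing $L\supseteq J$ and using that the family of projections $(P_K)_{K\in\mathcal F(I)}$ separates points of $s(P)Ms(P)$ — one obtains $\E|_{s_JMs_J}=\E_J$, so that $\E$ is genuinely an extension of the family $(\E_J)$.

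The hard part will be to prove that $\E$ is \emph{normal}; this is the only place where the finiteness of the $s_J$ is really used, since the weak$^*$-limit of the $\E_J$ need not a priori be weak$^*$-continuous. The plan is to pass to the predual. Put $T_J\ov{\mathrm{def}}{=}\E_J\circ P_J$ on $s(P)Ms(P)$; the computation above shows $T_J=P_J\circ T_{J'}$ whenever $J\subseteq J'$, and each $T_J$ is normal (a composition of the normal maps $\E_J$ and $P_J$) and contractive. Fix $\omega$ in the predual of $s(P)Ms(P)$. Then $T_J^*\omega=T_{J'}^*(P_J^*\omega)$ for $J\subseteq J'$, where $P_J^*\omega=\omega(s_J\,\cdot\,s_J)$; hence for $J_1,J_2\in\mathcal F(I)$, taking $J'=J_1\cup J_2$ and using $\norm{T_{J'}}\leq1$,
\[
\bnorm{T_{J_1}^*\omega-T_{J_2}^*\omega}=\bnorm{T_{J'}^*(P_{J_1}^*\omega-P_{J_2}^*\omega)}\leq\bnorm{\omega-P_{J_1}^*\omega}+\bnorm{\omega-P_{J_2}^*\omega},
\]
which tends to $0$ because $s_J\omega s_J\to\omega$ in the norm of the predual as $s_J\uparrow s(P)$ (the fact already invoked in Section~\ref{Carac-Nh}). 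Thus $(T_J^*\omega)_J$ is norm-Cauchy in the Banach-space dual of $s(P)Ms(P)$; its limit $\mu$ stays in the (norm-closed) predual, and $\la\mu,x\ra=\lim_J\la\omega,T_Jx\ra=\la\omega,\E(x)\ra$ for every $x$ since $T_Jx=x_J\to\E(x)$ weak$^*$. Hence $\E^*$ maps the predual into itself, i.e. $\E$ is normal. Uniqueness of the normal extension is then immediate: any normal $\E'$ with $\E'|_{s_JMs_J}=\E_J$ must satisfy $\E'(x)=\w^*\sdash\lim_K\E'(s_Kxs_K)=\w^*\sdash\lim_K\E_K(s_Kxs_K)=\E(x)$, using $s_Kxs_K\to x$ weak$^*$.

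It remains to recognise $\E$ as a faithful conditional expectation with range $N_P$, and here everything reduces to the local properties already established for the $\E_J$. Positivity of $\E$ is clear, since the positive cone is weak$^*$-closed and $x_J\geq0$ whenever $x\geq0$. Idempotency follows from $P_K\E^2(x)=\E_K(s_K\E(x)s_K)=\E_K^2(s_Kxs_K)=\E_K(s_Kxs_K)=P_K\E(x)$ together with separation. For the range, $P_K\E(x)=\E_K(s_Kxs_K)\in N_{s_K}\subseteq N_P$ for all $K$ and $s_K\E(x)s_K\to\E(x)$ weak$^*$, so $\E(x)\in N_P$ as $N_P$ is weak$^*$-closed; conversely if $x\in N_P$ then $s_Kxs_K\in s_KN_Ps_K=N_{s_K}=\Ran\E_K$, whence $\E_K(s_Kxs_K)=s_Kxs_K$ and so $\E(x)=x$. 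Thus $\E$ is a unital norm-one idempotent onto the von Neumann subalgebra $N_P$, and Tomiyama's theorem (\cite[Theorem page 116]{Str}) upgrades it to a (completely positive) conditional expectation, in particular $N_P$-bimodular. Finally $\E$ is faithful: if $x\geq0$ and $\E(x)=0$, then $\E_K(s_Kxs_K)=0$ for every $K$, hence $s_Kxs_K=0$ by faithfulness of $\E_K=\E_{h_K}$ (Lemma~\ref{Lemma-faithful}), and letting $s_K\uparrow s(P)$ gives $x=0$. I expect the normality step to be the only genuine obstacle; the rest is bookkeeping built on Remark~\ref{rem:local-def} and on the local results of Section~\ref{Sec-a-local}.
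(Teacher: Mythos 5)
Your proposal is correct, and its skeleton (glue the $\E_J$ via Remark \ref{rem:local-def} using the compatibility relation $P_K\E_JP_J=\E_KP_K$, then recover idempotency, range, faithfulness and uniqueness from the local statements by the separating family $(P_K)$) is the same as the paper's. Where you genuinely diverge is in the two ``soft'' steps. For normality, the paper simply invokes Lemma \ref{w*cv}: if a bounded net $x_\alpha\to x$ weak*, then each compression $P_J\E x_\alpha=\E_JP_Jx_\alpha$ converges weak*, so $\E x_\alpha\to\E x$; this gives weak*-continuity on bounded nets (hence normality). You instead work in the predual: using $T_J=P_JT_{J'}$ you show the adjoints $T_J^*\omega$ form a norm-Cauchy net because $\norm{\omega-s_J\omega s_J}\to 0$, so $\E^*$ maps the predual into itself. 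Both arguments are valid; yours is more self-contained (it avoids the implicit ``bounded weak*-continuity implies normality'' step behind the paper's shortcut) at the cost of a slightly longer computation, and it reuses exactly the norm-convergence fact $s_J\omega s_J\to\omega$ that the paper already invoked when identifying $N_P$. For the conditional expectation property, the paper obtains $N_P$-bimodularity directly as the weak*-limit of the $s_JMs_J$-bimodularity of the $\E_J$ and then reads off $\Ran\E=N_P$ from it, whereas you first pin down $\Ran\E=N_P$ and that $\E$ is a unital norm-one idempotent, and then call on Tomiyama's theorem (the same reference the paper uses elsewhere, in Proposition \ref{Proj-cp-VNA}) to get bimodularity and complete positivity; this is a legitimate alternative and costs nothing, since $N_P$ is by construction a weak*-closed unital $*$-subalgebra of $s(P)Ms(P)$. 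All the local ingredients you cite (Lemma \ref{proj-in-Ns} for $s_K\in N_{s_J}$, Lemma \ref{lem:E-compatibility} for compatibility, Lemma \ref{Lemma-faithful} for faithfulness) are used exactly as in the paper, so no gap remains.
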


\begin{proof} {\it Existence of $\E$}. Note that if $K,J \in \cal{F}(I)$ satisfy $K \subset J$, we have
\begin{align}
\label{rel:E}
P_K\E_JP_J(x) 
= \E_JP_K(x) 
= \E_KP_K(x)
\end{align}
for all $x \in s(P)\cal{M}s(P)$ (the first equation by $\cal{N}_J$-bimodularity of $\E_J$, the second by Lemma \ref{lem:E-compatibility}). By Remark \ref{rem:local-def} there exists a unique map $\E \co s(P)\cal{M}s(P) \to s(P)\cal{M}s(P)$ such that:
\begin{align}
\label{def-E}
P_K\E(x)
= \E_KP_K(x)
\end{align}
for all $x \in s(P) \cal{M}s(P)$. Moreover we have $\E(x) =w^* \sdash \lim_J \E_JP_J(x)$. If $x \in s_K\cal{M}s_K$ we have  $\E_J(x)= \E_K(x)$ for all $J \supset K$ and thus $\E(x) = \E_K(x)$, i.e.~$\E$ extends all the $\E_K$, $K \in \cal{F}(I)$. Then by Lemma \ref{w*cv} the weak* continuity of $\E$ follows from that of the $\E_J$ where $J \in \cal{F}(I)$. As for the faithfulness, it results from \eqref{rel:E} and faithfulness of the $\E_J$, since if $x \geq 0$ we have
$$
\E x
= 0 \implies \forall J \in \cal{F}(I),\ 
\E_JP_J(x)
= P_J\E(x)=0\implies \forall J,\ 
P_J(x)=0
\implies x=0.
$$
It is clear that $\E$ is $s_J\cal{M}s_J$-bimodular for every $J$. The $\cal{N}_P$-modularity follows by weak* continuity. Finally $\Ran \E$ is included in $\cal{N}_P$ and contains $s(P)$. By $\cal{N}_P$-modularity, $\Ran \E=\cal{N}_P$.

\medskip
\noindent{\it Uniqueness.}  If $\cal{E}$ is another extension of the $\E_J$'s then by weak* continuity
$$
\cal{E}(x)
= \cal{E}\big(w^*\sdash \lim_J P_Jx \big) 
= w^*\sdash\lim_J \cal{E} P_J(x)
=w^*\sdash\lim_J\E_JP_J(x) 
= w^*\sdash\lim_J\E P_J(x) 
=\E(x).
$$
\end{proof}

\begin{lemma}
 We have $\psi=\psi \circ \E$.
\end{lemma}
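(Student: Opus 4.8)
The plan is to reduce the desired global invariance to the local invariances already established in Section~\ref{Sec-a-local}, one summand of $\psi$ at a time. Since a weight on a von Neumann algebra is by definition a map on the positive cone of that algebra, it suffices to prove $\psi(\E(x)) = \psi(x)$ for every $x \in (s(P)Ms(P))_+$; note that $\E(x)$ is again a (bounded) positive element, as $\E$ is a conditional expectation by Lemma~\ref{global-expect}. The three ingredients I would combine are: the decoupling $\psi = \sum_{i\in I}\psi_i$ from \eqref{global-weight}; the defining relation \eqref{def-E} for $\E$, which says that the compression of $\E$ by each $s_i$ is exactly the local conditional expectation $\E_{h_i}$; and the local invariance $\psi_{h_i}\circ\E_{h_i} = \psi_{h_i}$ furnished by Lemma~\ref{Lemma-prservation-state} (equivalently the first part of \eqref{relevement-final} with $J = \{i\}$).

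Concretely, fix $x \in (s(P)Ms(P))_+$ and $i \in I$. Because $\psi_i$ is a bounded normal positive form with support $s_i$ (it is associated with $h_i^p$, whose support is $s(h_i) = s_i$), we have $\psi_i(y) = \psi_i(s_i y s_i)$ for all $y \in s(P)Ms(P)$. Applying this with $y = \E(x)$ and then \eqref{def-E} with the singleton $K = \{i\}$ — so that $P_{\{i\}}(x) = s_i x s_i$ and $\E_{\{i\}} = \E_{h_i}$ — yields
$$
\psi_i(\E(x)) = \psi_i\big(s_i\E(x)s_i\big) = \psi_i\big(\E_{h_i}(s_i x s_i)\big).
$$
Now $s_i x s_i \in s_iMs_i = M_{h_i}$, and Lemma~\ref{Lemma-prservation-state} applied to $h = h_i$ (with $\psi_{h_i}$ the restriction of $\psi_i$ to $M_{h_i}$) gives $\psi_i\big(\E_{h_i}(s_i x s_i)\big) = \psi_i(s_i x s_i) = \psi_i(x)$, using the support property of $\psi_i$ once more. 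Hence $\psi_i\circ\E = \psi_i$ on $(s(P)Ms(P))_+$ for every $i \in I$.

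It then remains to sum over $i$: by \eqref{global-weight}, $\psi(\E(x)) = \sum_{i\in I}\psi_i(\E(x)) = \sum_{i\in I}\psi_i(x) = \psi(x)$, the interchange being unproblematic since all terms are nonnegative. This gives $\psi = \psi\circ\E$ as weights on $s(P)Ms(P)$. I do not expect a genuine obstacle here; the only bookkeeping point is to recognize that the construction of $\E$ in Lemma~\ref{global-expect} was arranged precisely so that $s_i\E(\cdot)s_i = \E_{h_i}(s_i\cdot s_i)$ — that is, relation \eqref{def-E} for singletons — after which the statement collapses termwise onto Lemma~\ref{Lemma-prservation-state}.
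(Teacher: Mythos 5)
Your proof is correct and follows essentially the same route as the paper: decompose $\psi=\sum_i\psi_i$, use the support property of $\psi_i$ together with the defining relation $s_i\E(x)s_i=\E_{h_i}(s_ixs_i)$, and invoke the local invariance $\psi_{h_i}\circ\E_{h_i}=\psi_{h_i}$ termwise before summing. The paper's proof is exactly this computation written in one line.
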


\begin{proof}
For $x\in s(P)\cal{M}_+s(P)$ we have
$$
\psi(\E(x))
\ov{\eqref{global-weight}}{=} \sum_{i \in I} \psi_i(s_i\E (x)s_i) 
=\sum_{i \in I} \psi_i(\E_i(s_ixs_i))
=\sum_{i \in I} \psi_i(s_ixs_i)
=\psi(x)
$$
\end{proof}

%As explained in Section \ref{Sec-conditional}, there is a canonical inclusion of the $\L^0$ space of the crossed product $\cal{N}_P \rtimes_{\sigma^\psi} \R$ into that of  $s(P)\cal{M}s(P)\rtimes_{\sigma^\psi} \R$, with preservation of their canonical traces, which leads to 

We can consider the inclusions $i_p \co \L^p(\cal{N}_P,\psi|_{\cal{N}_P}) \subset \L^p(s(P)\cal{M}s(P),\psi)$ for any $1 \leq p \leq \infty$. The $\L^p$-extension $\E_{p} \co \L^p(s(P)\cal{M}s(P), \psi) \to \L^p(s(P)\cal{M}s(P),\psi)$ is defined by transposition of $i_{p^*}$.%(for the Haagerup trace duality).

\begin{lemma}\label{comm-diag-1}
We have the following commutative diagram:
$$
 \xymatrix @R=1cm @C=2cm
{\L^p(s(P)\cal{M}s(P),\psi)\ar[r]^{\E_p}&\L^p(s(P)\cal{M}s(P),\psi)\\
 s_J\L^p(s(P)\cal{M}s(P),\psi)s_J=\L^p(s_J\cal{M}s_J) \ar@{^{(}->}[u]\ar[r]^{\E_{J,p}}   &  \L^p(s_J\cal{M}s_J) = s_J\L^p(s(P)\cal{M}s(P),\psi)s_J\ar@{^{(}->}[u]}
$$

\end{lemma}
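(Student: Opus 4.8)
The plan is to prove commutativity pointwise: given $k$ in the reduced space $s_J\L^p(s(P)Ms(P),\psi)s_J$, identified with $\L^p(s_JMs_J)$, I want to show that $\E_p(k)=\E_{J,p}(k)$ as elements of $\L^p(s(P)Ms(P),\psi)$, where $\E_p=\E_{\psi,p}$ is the conditional expectation attached to the global pair $(\E,\psi)$ (the conditional expectation of Lemma \ref{global-expect} and the $\E$-invariant weight $\psi$) and $\E_{J,p}=\E_{h_J,p}$ is the local one attached to $(\E_J,\psi_J)$. Both maps are instances of the construction of \S\ref{Sec-conditional}, so each is pinned down by a trace-duality characterization; the whole proof will consist in checking that these two characterizations agree on the reduced space, with no density argument needed. (For $p=\infty$ the claim is simply that $\E$ restricts to $\E_J$ on $s_JMs_J$, which is part of Lemma \ref{global-expect}.)

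The structural input is this. Since $h_J\in\Ran P$ is positive with support $s_J$, we have $s_J\in\mathcal P(P)$, hence $s_J\in N_P$; recall also that $N_{s_J}=s_JN_Ps_J$. As $\E_p$ is $N_P$-bimodular (see \eqref{esp-cond-prop} with $q=\infty$) and $k=s_Jks_J$, we get $\E_p(k)=s_J\E_p(k)s_J$; since $\E_p(k)$ lies in $\L^p(N_P)$, this forces $\E_p(k)\in s_J\L^p(N_P)s_J=\L^p(N_{s_J})$. Hence $\E_p$ maps $\L^p(s_JMs_J)$ into $\L^p(N_{s_J})$, exactly the codomain of $\E_{J,p}$. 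All the trace functionals occurring below---on $\L^1(N_{s_J})$, $\L^1(s_JMs_J)$, $\L^1(N_P)$ and $\L^1(s(P)Ms(P))$---are mutually compatible, by \eqref{Traces-coincident} and the compatibility of the embeddings arranged in \S\ref{Haagerup-noncommutative} and \S\ref{Sec-conditional}, so I write them all $\tr$.

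For $1<p<\infty$, by its defining property \eqref{Ep} applied to the pair $(s_JMs_J,N_{s_J})$ with conditional expectation $\E_J$, the element $\E_{J,p}(k)$ is the unique element of $\L^p(N_{s_J})$ with $\tr(\E_{J,p}(k)k')=\tr(kk')$ for all $k'\in\L^{p^*}(N_{s_J})$. But $\E_p(k)\in\L^p(N_{s_J})$ by the previous step, and for $k'\in\L^{p^*}(N_{s_J})\subset\L^{p^*}(N_P)$ the defining property \eqref{Ep} of $\E_p$ gives $\tr(\E_p(k)k')=\tr(kk')$; uniqueness yields $\E_p(k)=\E_{J,p}(k)$. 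For $p=1$ the same pattern works with \eqref{E1}: $\E_{J,1}(k)$ is the unique element of $\L^1(N_{s_J})$ with $\tr(\E_{J,1}(k)y)=\tr(k\E_J(y))$ for all $y\in s_JMs_J$, whereas for such $y$, writing $\E_1(k)=s_J\E_1(k)s_J$, then applying \eqref{E1} for the global $\E_1$, then using that $\E$ restricts to $\E_J$ on $s_JMs_J$, one computes $\tr(\E_1(k)y)=\tr(\E_1(k)s_Jys_J)=\tr(k\E(s_Jys_J))=\tr(k\E_J(y))$; uniqueness again gives $\E_1(k)=\E_{J,1}(k)$.

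I expect the only point requiring care to be the assertion that $\E_p$ carries the reduced space into $\L^p(N_{s_J})$: this is where $s_J\in N_P$ and the $N_P$-bimodularity of $\E_p$ are used, and once it is in place everything reduces to a formal comparison of the uniqueness clauses defining $\E_p$ and $\E_{J,p}$. The remaining delicacy---the bookkeeping of the nested chains $\L^p(N_{s_J})\subset\L^p(N_P)\subset\L^p(s(P)Ms(P))$ and $\L^p(N_{s_J})\subset\L^p(s_JMs_J)\subset\L^p(s(P)Ms(P))$, together with the agreement of their traces---has already been taken care of in the preliminary sections.
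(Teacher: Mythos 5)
Your proof is correct and is essentially the paper's own argument run in mirror image: the paper verifies the defining duality relation \eqref{Ep} of the global $\E_{\psi,p}$ for the element $\E_{J,p}(h)$, compressing the test element $k \in \L^{p^*}(N_P)$ by $s_J$ and using that $s_J$ lies in the centralizer of $\psi$, whereas you verify the defining relation of the local $\E_{J,p}$ for $\E_{\psi,p}(k)$, first locating $\E_{\psi,p}(k)$ in $\L^p(N_{s_J})$ via $s_J \in N_P$ and the $N_P$-bimodularity of $\E_{\psi,p}$ (with the separate \eqref{E1}-based check for $p=1$). Both routes rest on exactly the same trace-duality characterizations, uniqueness, and compatibility of the nested corner/subalgebra embeddings and their trace functionals, so your proposal is fine.
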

\begin{proof}
For any $h \in \L^p(s_J\cal{M}s_J,\psi)$ and any $k \in \L^{p_*}(\cal{N}_P,\psi)$, we have using twice the fact that $s_J$ is in the centralizer of $\psi$
\begin{align*}
\tr_\psi(hk)
&= \tr_\psi (s_Jhs_Jk)
=\tr_\psi(hs_Jks_J)
=\tr_{\psi_J}(hs_Jks_J) \\ 
&\ov{\eqref{Ep}}{=} \tr_{\psi_J}(\E_{J,p} (h)s_Jks_J) 
= \tr_{\psi}(\E_{J,p} (h)s_Jks_J) \\ 
& = \tr_{\psi}(s_J\E_{J,p}(h)s_Jk) 
= \tr_{\psi}(\E_{J,p} (h)k).
\end{align*}
With the characterization of \eqref{Ep}, we conclude that $\E_{p}(h)= \E_{J,p}(h)$.
\end{proof}

Using an isomorphism $\kappa \co \L^p(s(P)\cal{M}s(P),\psi) \to \L^p(s(P)\cal{M}s(P),\varphi_{s(P)})$ provided by \eqref{kappa}, the following diagram summarizes the present situation.
$$
\xymatrix @R=1cm @C=2cm{
\L^p(\cal{M},\varphi)\ar@{^{(}->}[r]^{P} &\L^p(\cal{M},\varphi)\\
\L^p\big(s(P)\cal{M}s(P),\varphi_{s(P)}\big)\ar@{^{(}->}[u]  &  \L^p\big(s(P)\cal{M}s(P),\varphi_{s(P)}\big)\ar@{^{(}->}[u]		\\
 s(P)\L^p(\cal{M},\chi)s(P)=\L^p(s(P)\cal{M}s(P),\psi) \ar[u]^{\kappa}   \ar[r]^{\E_{p}} &  s(P)\L^p(\cal{M},\chi)s(P)=\L^p(s(P)\cal{M}s(P),\psi) \ar[u]_{\kappa} \\			
%     \L^p(M_h,\psi)  \ar[r]^{\E_{h,p}}\ar[u]^{\hat{\kappa}}&  \L^p(M_h,\psi) \ar[u]_{\hat{\kappa}}					
}
$$

%Let us consider now a normal semifinite faithful weight $\chi$ on $\cal{M}$ such that $s(P)$ belongs to the centralizer of $\chi$, and $\chi_{s(P)}=\chi|_{s(P)\cal{M}s(P)} = \psi$. Let $\kappa \co (\cal{M} \rtimes_{\sigma^\varphi_i} \R,\tau_\varphi)\to (\cal{M} \rtimes_{\sigma^\chi_i} \R,\tau_\chi)$ be the canonical trace-preserving isomorphism, $\hat\kappa \co \L^0(\cal{M} \rtimes_{\sigma^\varphi_i} \R,\tau_\varphi) \to \L^0(\cal{M} \rtimes_{\sigma^\chi_i} \R,\tau_\chi)$ its natural extension, which preserves the respective Haagerup's noncommutative $\L^p(\cal{M})$-spaces. Recall that if we identify $\cal{M}$ with its canonical image in each crossed product then by \eqref{kappa} $\hat \kappa$ fixes the points of $\cal{M}$.

%\begin{prop}\label{comm-diag-2}
%We have the following commutative diagram:
%$$
%\xymatrix @R=1cm @C=2cm{
%\L^p(\cal{M},\varphi)\ar@{^{(}->}[r]^{P} &\L^p(\cal{M},\varphi)\\
%\L^p(\cal{M},\chi)  \ar[u]^{\hat{\kappa}^{-1}}&  \L^p(\cal{M},\chi) \ar[u]_{\hat{\kappa}^{-1}}		\\
 %s(P)\L^p(\cal{M},\chi)s(P)=\L^p(s(P)\cal{M}s(P),\psi)  \ar@{^{(}->}[u]   \ar[r]^{\E_{\psi,p}}   &  \L^p(s(P)\cal{M}s(P),\psi)=s(P)\L^p(\cal{M},\chi)s(P)\ar@{^{(}->}[u]
 %}
%$$
%\end{prop}
We will prove that \textit{on the subspace} $s(P)\L^p(\cal{M})s(P)$ we have
\begin{equation}
\label{sans-fin-R}
\kappa\E_p\kappa^{-1}
=P.
\end{equation}
For every $J \in \cal{F}(I)$, it follows by \eqref{sans-fin-3} that
$$
\kappa\E_{J,p}\kappa^{-1}(k)
=P|_{s(P)\L^p(\cal{M})s(P)}(k)
$$
for all $k \in s_J\L^p(s(P)\cal{M}s(P),\psi)s_J = \L^p(s_J \cal{M} s_J,\psi_J)$. Since by Lemma \ref{comm-diag-1}, $\E_{J,p} $ is the restriction of $\E_{p} \co \L^p(s(P)\cal{M} s(P),\psi)  \to \L^p(s(P)\cal{M} s(P),\psi)$ to the subspace $s_J\L^p(s_J \cal{M} s_J,\psi_J)s_J$ we obtain
$$
\kappa\E_p\kappa^{-1}(k)
=P|_{s(P)\L^p(\cal{M})s(P)}(k).
$$
for all $k\in \bigcup_J \L^p(s_J\cal{M}s_J)$. By density of the subspace $\bigcup_J \L^p(s_J\cal{M}s_J)$ in the Banach space $\L^p(s(P)\cal{M}s(P))$ it follows that $P$ coincides with $\kappa\E_p\kappa^{-1}$ on $s(P)\L^p(\cal{M},\varphi)s(P)$.

%
%%%%%%%%%%%%%%%%%%%%%%%%%%%%%%%%%%%%%%%%%%%%%%%%%%%%%%%%%%%%%%%%%%%%%%%%%%%%%%%%%%%%%%%%%%%%%
%\subsection{Complement on the case $1 < p < \infty$ \textbf{OK}} 
%\label{sec:Complement}
%%%%%%%%%%%%%%%%%%%%%%%%%%%%%%%%%%%%%%%%%%%%%%%%%%%%%%%%%%%%%%%%%%%%%%%%%%%%%%%%%%%%%%%%%%%%%

\vspace{0.2cm}

\textbf{Declaration of competing interest}.
The authors have no conflicts of interest to declare that are relevant to the content
of this article.

\vspace{0.2cm}

\textbf{Data availability}.
Data sharing is not applicable to this article as no datasets were generated or analysed
during the current study.

\vspace{0.2cm}

\textbf{Acknowledgment}.
The author acknowledges support by the grant ANR-18-CE40-0021 (project HASCON) of the French National Research Agency ANR. Finally, the authors would like to thank Quanhua Xu for us providing an expanded version of his paper \cite{HJX10} from his \textit{own} initiative. Finally, we would like to express our gratitude to the referee for their contribution in simplifying the proof of Proposition \ref{prop:range-pc} (our original approach was based on a result of Calvert \cite{Cal75}), and for their careful and thorough review of our paper.

%The first named author would like to thank Universit\'e of Paris 6 where he had fruitful discussions with the second named author during his visit in 2015. 

%%%%%%%%%%%%%%%%%%%%%%%%%%%%%%%%%%%%%%%%%
%%%%%%%%%%%%%%%%%%%%%%%%%%%%%%%%%%%%%%%%%
\small

{\footnotesize

\vspace{0.2cm}

\noindent C\'edric Arhancet\\ 
\noindent 6 rue Didier Daurat, 81000 Albi, France\\
URL: \href{http://sites.google.com/site/cedricarhancet}{https://sites.google.com/site/cedricarhancet}\\
cedric.arhancet@protonmail.com\\

\noindent Yves Raynaud\\ 
\n Sorbonne Universit\'e, Universit\'e Paris-Diderot, CNRS,\\
Institut de Math\'ematiques de Jussieu-Paris Rive Gauche, IMJ-PRG, \\
F-75005, Paris, France.\\
yves.raynaud@upmc.fr\hskip.3cm
}


\begin{thebibliography}{79}

\bibitem[AbA02]{AbA02}
Y. A. Abramovich and C. D. Aliprantis.
\newblock An invitation to operator theory.
\newblock Graduate Studies in Mathematics, 50. American Mathematical Society, Providence, RI, 2002.

\bibitem[AcC82]{AcC82}
L. Accardi and C. Cecchini.
\newblock Conditional expectations in von Neumann algebras and a theorem of Takesaki.
\newblock J. Funct. Anal. 45 (1982), 245--273.

%\bibitem[AcL]{AcL}
%L. Accardi and R. Longo.
%\newblock Martingale convergence of generalized conditional expectations.
%\newblock J. Funct. Anal. 118 (1993), no. 1, 119--130.

\bibitem[AlT07]{AlT07}
C. D. Aliprantis and R. and Tourky.
\newblock Cones and Duality.
\newblock Graduate Studies in Mathematics, 84. American Mathematical Society, Providence, RI, 2007.

\bibitem[And66]{And66}
T. Ando.
\newblock Contractive projections in $L_{p}$ spaces.
\newblock Pacific J. Math. 17 (1966), 391--405.

%\bibitem[AnD]{AnD}
%C. Anantharaman-Delaroche.
%\newblock On ergodic theorems for free group actions on noncommutative spaces.
%\newblock Probab. Theory Related Fields 135 (2006), no. 4, 520--546.

%\bibitem[AHW]{AHW}
%H. Ando, U. Haagerup, C. Winslow.
%\newblock Ultraproducts, QWEP von Neumann Algebras, and the Effros-Maréchal Topology.
%\newblock Arxiv.



%\bibitem[AlS]{AlS}
%Alfsen-Shultz.
%\newblock Geometry of state spaces of operator algebras.
%\newblock Book.

%\bibitem[AnH]{AnH}
%H. Ando and U. Haagerup.
%\newblock Ultraproducts of von Neumann algebras.
%\newblock J. Funct. Anal. 266 (2014), no. 12, 6842--6913.  

\bibitem[ArF78]{ArF78}
J. Arazy and Y. Friedman.
\newblock Contractive projections in $C_1$ and $C_\infty$.
\newblock Mem. Amer. Math. Soc. 13 (1978), no. 200, 1--165.

\bibitem[ArF92]{ArF92}
J. Arazy and Y. Friedman.
\newblock Contractive projections in $C_p$.
\newblock Mem. Amer. Math. Soc. 459 (1992), 1--109.

%\bibitem[Arh19]{Arh1}
%C. Arhancet.
%\newblock Dilation of semigroups on von Neumann algebras and noncommutative $\mathrm{L}^p$-spaces.
%\newblock J. Funct. Anal. 276 (2019), no. 7, 2279--2314.

\bibitem[Arh20]{Arh20}
C. Arhancet.
\newblock Positive contractive projections on noncommutative $\L^p$-spaces and nonassociative $\L^p$-spaces.
\newblock Preprint, arXiv:1909.00391.

\bibitem[Arh23a]{Arh23a}
C. Arhancet.
\newblock Contractively decomposable projections on noncommutative $\L^p$-spaces.
\newblock J. Math. Anal. Appl. 533 (2024), no. 2, Paper No. 128017.

\bibitem[Arh23b]{Arh23b}
C. Arhancet.
\newblock Nonassociative $\L^p$-spaces and embeddings in noncommutative $\L^p$-spaces.
\newblock Preprint, arXiv:2307.04452.

\bibitem[ArK23]{ArK23}
C. Arhancet and C. Kriegler.
\newblock Projections, multipliers and decomposable maps on noncommutative $\L^p$-spaces.
\newblock M\'em. Soc. Math. Fr. (N.S.) (2023), no. 177.

\bibitem[BeL74]{BeL74}
S. J. Bernau and H. E. Lacey.
\newblock The range of a contractive projection on an $L^p$-space.
\newblock Pacific J. Math. 53 (1974), 21--41.

%\bibitem[BLM]{BLM}
%Blecher Le Merdy.
%\newblock Book.
%\newblock .


%\bibitem[Bla]{Bla}
%B. Blackadar.
%\newblock Operator algebras. Theory of $C^*$-algebras and von Neumann algebras.
%\newblock Encyclopaedia of Mathematical Sciences, 122. Operator Algebras and Non-commutative Geometry, III. Springer-Verlag, Berlin, 2006.

%\bibitem[BLM]{BLM}
%D. Blecher and C. Le Merdy.
%\newblock Operator algebras and their modules-an operator space approach.
%\newblock London Mathematical Society Monographs. New Series, 30. Oxford Science Publications. The Clarendon Press, Oxford University Press, Oxford, 2004.

%\bibitem[CL]{CL}
 %Valerio Capraro, Martino Lupini.
%\newblock Introduction to Sofic and Hyperlinear groups and Connes' embedding conjecture.
%\newblock .

\bibitem[Cal75]{Cal75}
B. Calvert.
\newblock Convergence sets in reflexive Banach Spaces.
\newblock Proc. Amer. Math. Soc. 47 (1975), no. 2, 423--428.

%\bibitem[ChE]{ChE}
%M. D. Choi and E. G. Effros.
%\newblock Injectivity and operator spaces.
%\newblock J. Funct. Anal. 24 (1977), no. 2, 156--209.

%\bibitem[Chd]{Chd}
%M. Choda.
%\newblock A Remark on the Normal Expectations.
%\newblock Proc. Japan Acad. 44 (1968), 462--466.

\bibitem[CoS70]{CoS70}
H. B. Cohen and F. E. Sullivan.
\newblock Projecting onto cycles in smooth, reflexive Banach spaces. 
\newblock Pacific J. Math. 34 (1970), 355--364. 

%\bibitem[Con]{Con1}
%A. Connes.
%\newblock Géométrie non commutative.
%\newblock .
%
\bibitem[Con73]{Con73}
A. Connes.
\newblock Une classification des facteurs de type III.
\newblock Ann. Sci. Ecole Norm. Sup. (4) 6 (1973), 133-252.

%\bibitem[CPPR]{CPPR}
%M. Caspers, J. Parcet, M. Perrin and \'E. Ricard.
%\newblock Noncommutative de Leeuw theorems.
%\newblock Forum Math. Sigma 3 (2015), e21.

%\bibitem[Dae1]{Dae1}
%A. van Daele.
%\newblock Continuous crossed products and type III von Neumann algebras.
%\newblock London Mathematical Society Lecture Note Series, 31. Cambridge University Press, Cambridge-New York, 1978. 

\bibitem[DeJ04]{DeJ04}
A. Defant and M. Junge.
\newblock Marius Maximal theorems of Menchoff-Rademacher type in non-commutative $L_q$-spaces.
\newblock J. Funct. Anal. 206 (2004), no. 2, 322--355. 

\bibitem[Dix77]{Dix77}
J. Dixmier.
\newblock $C\sp*$-algebras. Translated from the French by Francis Jellett.
\newblock North-Holland Mathematical Library, Vol. 15. North-Holland Publishing Co., Amsterdam-New York-Oxford, 1977. 

%\bibitem[Dix2]{Dix2}
%J. Dixmier.
%\newblock Von Neumann algebras. With a preface by E. C. Lance. Translated from the second French edition by F. Jellett
%\newblock North-Holland Mathematical Library, 27. North-Holland Publishing Co., Amsterdam-New York, 1981. 

%\bibitem[DP]{DP}
%B. de Pagter.
%\newblock Non-commutative Banach function spaces.
%\newblock Positivity, 197--227, Trends Math., Birkhäuser, Basel, 2007.

%\bibitem[DP]{DP}
%De Pagter.
%\newblock Course von Neumann algebras pdf.
%\newblock .


%\bibitem[DLG2]{DLG2}
%K. de Leeuw and I. Glicksberg.
%\newblock The decomposition of certain group representations.
%\newblock J. Analyse Math. 15 (1965), 135--192.

\bibitem[DHP90]{DHdP90}
P. G. Dodds, C. B. Huijsmans and B. de Pagter.
\newblock Characterization of conditional expectation-type operators.
\newblock Pacific J. Math. 14 (1990), no 1, 55--77.


\bibitem[Dou65]{Dou65}
R. G. Douglas.
\newblock Contractive projections on an $\mathcal{L}_{1}$ space.
\newblock Pacific J. Math. 15 (1965), 443--462.

%\bibitem[EFHN]{EFHN}
%T. Eisner, B. Farkas, M. Haase and R. Nagel.
%\newblock Operator theoretic aspects of ergodic theory.
%\newblock Graduate Texts in Mathematics, 272. Springer, Cham, 2015.

\bibitem[EfR00]{EfR00}
E. Effros and Z.-J. Ruan.
\newblock Operator spaces.
\newblock Oxford University Press (2000).

%\bibitem[EfS]{EfS}
%E. Effros and E. St\o rmer.
%\newblock Positive projections and Jordan structure in operator algebras.
%\newblock Math. Scand. 45 (1979), no. 1, 127--138.

%\bibitem[EfS2]{EfS2}
%E. Effros and E. St\o rmer.
%\newblock Jordan algebras of self-adjoint operators.
%\newblock Trans. Amer. Math. Soc. 127 1967 313--316.

\bibitem[FrB85]{FrB85}
Y. Friedman and B. Russo.
\newblock Solution of the contractive projection problem.
\newblock J. Funct. Anal. 60 (1985), no. 1, 56--79.

\bibitem[Gol85]{Gol85}
S. Goldstein.
\newblock Conditional expectations in $L^p$-spaces over von Neumann algebras.
\newblock  Quantum probability and applications, II (Heidelberg, 1984), 233--239, Lecture Notes in Math., 1136, Springer, Berlin, 1985.   

%\bibitem[Gol2]{Gol2}
%S. Goldstein.
%\newblock Conditional expectation and stochastic integrals in noncommutative $L^p$ spaces.
%\newblock Math. Proc. Cambridge Philos. Soc. 110 (1991), no. 2, 365--383.
	
\bibitem[GoL09]{GoLa09}
S. Goldstein and L. E. Labuschagne.
\newblock Composition operators on Haagerup $L^p$-spaces.
\newblock Infin. Dimens. Anal. Quantum Probab. Relat. Top. 12 (2009), no. 3, 439--468.	

%\bibitem[GuR]{GuR}
%S. Guerre. and Y. Raynaud.
%\newblock Sur les isométries de $L^p(X)$ et le théorème ergodique vectoriel. (French) [The isometries of $L^p(X)$ and the vector-valued ergodic theorem].
%\newblock Canad. J. Math. 40 (1988), no. 2, 360--391.

%\bibitem[Haa]{Haa}
%U. Haagerup.
%\newblock Injectivity and decomposition of completely bounded maps.
%\newblock .

%\bibitem[Han]{Han}
%Kyung Hoon Han.
%\newblock Noncommutative $L_p$-space and operator system.
%\newblock .

%\bibitem[HaM]{HaM}
%U. Haagerup and M. Musat.
%\newblock Factorization and dilation problems for completely positive maps on von Neumann algebras.
%\newblock Comm. Math. Phys. 303 (2011), no. 2, 555--594.

%\bibitem[HeR]{HeR}
 %C. W. Henson and Y. Raynaud.
%\newblock On the theory of $L_p(L_q)$-Banach lattices.
%\newblock  Positivity 11 (2007), no. 2, 201--230.


%\bibitem[Izu]{Izu}
%H. Izumi.
%\newblock Constructions of non-commutative $L^p$-spaces with a complex parameter arising from modular actions.
%\newblock Internat. J. Math. 8 (1997), no. 8, 1029--1066.


%\bibitem[Jun]{Jun}
%M. Junge.
%\newblock Fubini's theorem for ultraproducts of noncommmutative $L_p$-spaces.
%\newblock Canad. J. Math. 56 (2004), no. 5, 983--1021.

%\bibitem[Jun2]{Jun2}
%M. Junge.
%\newblock Fubini's theorem for ultraproducts of noncommmutative $L_p$-spaces II.
%\newblock Preprint.
%
%\bibitem[JR1]{JR1}
%M. Junge and Z.-J. Ruan.
%\newblock Approximation properties for noncommutative $L_p$-spaces associated with discrete groups.
%\newblock Duke Math. J. 117 (2003), no. 2, 313--341.


%\bibitem[JoL]{JoL}
%W. B. Johnson and J. Lindenstrauss.
%\newblock Basic concepts in the geometry of {B}anach spaces, Handbook of the geometry of {B}anach spaces, {V}ol. {I}.
%\newblock  North-Holland, Amsterdam, 2001, pp.~1--84.

%\bibitem[GL1]{GL1}
%S. Goldstein and J. M. Lindsay.
%\newblock KMS-symmetric Markov semigroups.
%\newblock Math. Z. 219 (1995), no. 4, 591--608.

%\bibitem[GL2]{GL2}
%S. Goldstein and J. M. Lindsay.
%\newblock Markov semigroups KMS-symmetric for a weight.
%\newblock Math. Ann. 313 (1999), no. 1, 39--67.





%\bibitem[Haa3]{Haa3}
%U. Haagerup.
%\newblock Injectivity and decomposition of completely bounded maps.
%\newblock Operator algebras and their connections with topology and ergodic theory (Bu\c{s}teni, 1983), 170--222, Lecture Notes in Math., 1132, Springer, Berlin, 1985.
 
%\bibitem[Haa78a]{Haa4}
%U. Haagerup.
%\newblock On the dual weights for crossed products of von Neumann algebras. I. Application of operator valued weights.
%\newblock Math. Scand. 43 (1978/79), no. 1, 99--118.

\bibitem[Haa78]{Haa78}
U. Haagerup.
\newblock On the dual weights for crossed products of von Neumann algebras. II. Removing separability conditions.
\newblock Math. Scand. 43 (1978/79), no. 1, 119--140.

\bibitem[Haa79a]{Haa79a}
U. Haagerup.
\newblock Operator-valued weights in von Neumann algebras. II.
\newblock J. Funct. Anal. 33 (1979), no. 3, 339--361.

\bibitem[Haa79b]{Haa79b}
U. Haagerup.
\newblock $L^p$-spaces associated with an arbitrary von Neumann algebra. Alg\`ebres d'op\'era-teurs et leurs applications en physique math\'ematique (Marseille, 1977), 175--184, Colloq. Internat. CNRS, 274, CNRS, Paris, 1979.

\bibitem[HJX10]{HJX10}
U. Haagerup, M. Junge and Q. Xu.
\newblock A reduction method for noncommutative $L_p$-spaces and applications.
\newblock Trans. Amer. Math. Soc. 362 (2010), no. 4, 2125--2165.

%\bibitem[HM]{HM}
%U. Haagerup and M. Musat.
%\newblock Factorization and dilation problems for completely positive maps on von Neumann algebras.
%\newblock Comm. Math. Phys. 303 (2011), no. 2, 555--594.

\bibitem[HRS03]{HRS03}
U. Haagerup, H.P. Rosenthal and F. A. Sukochev.
\newblock Banach embedding properties of non-commutative $L^p$-spaces.
\newblock Mem. Amer. Math. Soc. 163 (2003), no. 776.

%\bibitem[HS1]{HS1}
%U. Haagerup and E. St\o rmer.
%\newblock Positive projections of von Neumann algebras onto JW-algebras.
%\newblock Proceedings of the XXVII Symposium on Mathematical Physics (Torun, 1994). Rep. Math. Phys. 36 (1995), no. 2-3, 317--330.

\bibitem[HiT87]{HiT87}
F. Hiai  and M. Tsukada 
\newblock Generalized conditional expectations and martingales in non-commutative $L^p$-spaces. 
\newblock J. Operator Theory 18 (1987), no. 2, 265--288.

\bibitem[HoT09]{HoT09}
T. Honda and W. Takahashi.
\newblock Norm one linear projections and generalized conditional expectations in Banach spaces.
\newblock Sci. Math. Jpn. 69 (2009), no. 3, 303--313.

%\bibitem[HOT1]{HOT1}
%H. Hanche-Olsen and E. Størmer.
%\newblock Jordan operator algebras.
%\newblock Monographs and Studies in Mathematics, 21. Pitman (Advanced Publishing Program), Boston, MA, 1984.

%\bibitem[JR]{JR}
%M. Junge and Z.-J. Ruan.
%\newblock Decomposable maps on non-commutative $L_p$-spaces.
%\newblock Operator algebras, quantization, and noncommutative geometry, 355--381, Contemp. Math., 365, Amer. Math. Soc., Providence, RI, 2004.

\bibitem[JRX05]{JRX05}
M. Junge, Z.-J. Ruan and Q. Xu.
\newblock Rigid $\mathscr{OL}_p$ structures of non-commutative $L^p$-spaces associated with hyperfinite von Neumann algebras.
\newblock Math. Scand. 96 (2005), no. 1, 63--95.

%\bibitem[Jun]{Jun}
%M. Junge.
%\newblock Fubini's theorem for ultraproducts of noncommmutative $L_p$-spaces.
%\newblock Canad. J. Math. 56 (2004), no. 5, 983--1021.
%
%\bibitem[Jun2]{Jun2}
%M. Junge.
%\newblock Fubini's theorem for ultraproducts of noncommmutative $L_p$-spaces II.
%\newblock Preprint.

\bibitem[JuX03]{JuX03}
M. Junge  and Q. Xu.
\newblock Noncommutative Burkholder/Rosenthal inequalities.
\newblock Ann. Probab. 31 (2003), no. 2, 948--995.

%\bibitem[KaR1]{KaR1}
%R. V. Kadison and J. R. Ringrose.
%\newblock Fundamentals of the theory of operator algebras. Vol. I. Elementary theory. Reprint of the 1983 original. 
%\newblock Graduate Studies in Mathematics, 15. American Mathematical Society, Providence, RI, 1997. 

\bibitem[KaR97]{KaR97}
R. V. Kadison and J. R. Ringrose.
\newblock Fundamentals of the theory of operator algebras. Vol. II. Advanced theory. Corrected reprint of the 1986 original.
\newblock Graduate Studies in Mathematics, 16. American Mathematical Society, Providence, RI, 1997. 

\bibitem[Kos14]{Kos14}
R. P. Kostecki.
\newblock $\mathrm{W}^*$-algebras and noncommutative integration.
\newblock Preprint, arXiv:1307.4818.


%\bibitem[KhS]{KhS}
%M. A. Khamsi and B. Sims.
%\newblock Ultra-methods in metric fixed point theory, Handbook of metric fixed point theory.
%\newblock Kluwer Acad. Publ., Dordrecht, 2001, pp.~177--199.

%\bibitem[Kir]{Kir}
%E. Kirchberg.
%\newblock On nonsemisplit extensions, tensor products and exactness of group C*-algebras.
%\newblock {\em Invent. Math.} 112, no. 3: 449--489, 1993.

%\bibitem[Kos2]{Kos2}
%H. Kosaki.
%\newblock Applications of the complex interpolation method to a von Neumann algebra: noncommutative $L^p$-spaces. 
%\newblock J. Funct. Anal. 56 (1984), no. 1, 29--78. 

\bibitem[Kos81]{Kos81}
H. Kosaki.
\newblock Positive cones and $L^p$-spaces associated with a von Neumann algebra. 
\newblock J. Operator Theory 6 (1981), no. 1, 13--23. 

%\bibitem[Knu]{Knu}
%S. Knudby.
%\newblock Semigroups of Herz-Schur multipliers.
%\newblock J. Funct. Anal. 266 (2014), no. 3, 1565--1610.

%\bibitem[KW]{KW}
%P. C. Kunstmann  and L. Weis.
%\newblock Maximal $L_p$-regularity for parabolic equations, Fourier multiplier theorems and $H^\infty$-functional calculus.
%\newblock pp. 65-311 in Functional analytic methods for evolution equations, Lect. Notes in Math. 1855, Springer (2004).



%\bibitem[LeR]{LeR}
%M. Levy and Y. Raynaud.
%\newblock Ultrapuissances des espaces $L^{p}(L^{q})$. (French) [Ultrapowers of $L^{p}(L^{q})$ spaces].
%\newblock C. R. Acad. Sci. Paris Sér. I Math. 299 (1984), no. 3, 81--84.

%\bibitem[Lab1]{Lab1}
%L. E. Labuschagne.
%\newblock Composition operators on non-commutative $L^p$-spaces.
%\newblock Exposition. Math. 17 (1999), no. 5, 429--467.

%\bibitem[Lab2]{Lab2}
%L. E. Labuschagne.
%\newblock A crossed product approach to Orlicz spaces.
%\newblock Proc. Lond. Math. Soc. (3) 107 (2013), no. 5, 965--1003.

%\bibitem[Lab3]{Lab3}
%L. E. Labuschagne.
%\newblock Multipliers on noncommutative Orlicz spaces.
%\newblock Quaest. Math. 37 (2014), no. 4, 531--546.

%\bibitem[Lac1]{Lac1}
%H. E. Lacey.
%\newblock The isometric theory of classical Banach spaces.
%\newblock Die Grundlehren der mathematischen Wissenschaften, Band 208. Springer-Verlag, New York-Heidelberg, 1974.

%\bibitem[LaM]{LaM}
%L. E. Labuschagne and W. A. Majewski.
%\newblock Maps on noncommutative Orlicz spaces.
%\newblock Illinois J. Math. 55 (2011), no. 3, 1053--1081 (2013).

\bibitem[LRR09]{LRR09}
C. Le Merdy, \'E. Ricard and J. Roydor.
\newblock Completely 1-complemented subspaces of Schatten spaces.
\newblock Trans. Amer. Math. Soc. 361 (2009), no. 2, 849--887.

\bibitem[Meg98]{Meg98}
R. E. Megginson.
\newblock An introduction to Banach space theory.
\newblock Graduate Texts in Mathematics, 183. Springer-Verlag, New York, 1998.

\bibitem[Mos06]{Mos06}
M.S. Moslehian.
\newblock A survey of the complemented subspaces problem.
\newblock Trends in Math. 9 (2006), no. 1, 91--98.

%\bibitem[Ter]{Ter}
%M. Terp.
%\newblock Interpolation spaces between a von Neumann algebra and its predual. .
%\newblock J. Operator Theory 8 (1982), no. 2, 327--360. .

\bibitem[NeR11]{NeR11}
M. Neal and B. Russo.
\newblock Existence of contractive projections on preduals of JBW*-triples.
\newblock Israel J. Math. 182 (2011), 293--331.

\bibitem[NO02]{NO02}
P. W. Ng, and N. Ozawa.
\newblock A characterization of completely 1-complemented subspaces of noncommutative $L^1$-spaces.
\newblock Pacific J. Math. 205 (2002), 171--195.

%\bibitem[Oza]{Oza}
%N. Ozawa.
%\newblock About the QWEP conjecture.
%\newblock Internat. J. Math.15, no. 5: 501--530, 2004.

\bibitem[Pal01]{Pal01}
T. W. Palmer.
\newblock Banach algebras and the general theory of $*$-algebras. Vol. 2. *-algebras.
\newblock Encyclopedia of Mathematics and its Applications, 79. Cambridge University Press, Cambridge, 2001.

%\bibitem[Pat18]{Pat18}
%H. K. Pathak.
%\newblock An Introduction to Nonlinear Analysis and Fixed Point Theory.
%\newblock Springer, Singapore, 2018.

\bibitem[Pau02]{Pau02}
V. Paulsen.
\newblock Completely bounded maps and operator algebras.
\newblock Cambridge University Press, Cambridge (2002).

\bibitem[PeT72]{PeT72}
G. K. Pedersen and M. Takesaki.
\newblock The operator equation $THT=K$.
\newblock Proc. Amer. Math. Soc. 36 (1972), 311--312.

\bibitem[Pis98]{Pis98}
G. Pisier.
\newblock Non-commutative vector valued $L_p$-spaces and completely $p$-summing maps.
\newblock Ast\'erisque, 247, 1998.

\bibitem[Pis03]{Pis03}
G. Pisier.
\newblock Introduction to operator space theory.
\newblock Cambridge University Press, Cambridge, 2003.

%\bibitem[Pis3]{Pis3}
%G. Pisier.
%\newblock Regular operators between non-commutative $L_p$-spaces.
%\newblock Bull. Sci. Math. 119 (1995), no. 2, 95--118.

%\bibitem[Pis7]{Pis7}
%G. Pisier.
%\newblock Complex interpolation between Hilbert, Banach and operator spaces.
%\newblock Mem. Amer. Math. Soc. 208 (2010), no. 978.
%
%\bibitem[Pis-IHP]{Pis-IHP}
%G. Pisier.
%\newblock Martingales in {B}anach spaces (in connection with {T}ype and
  %{C}otype).
%\newblock 2011, Course IHP.

\bibitem[PiX03]{PiX03}
G. Pisier and Q. Xu.
\newblock Non-commutative $L^p$-spaces.
\newblock 1459--1517 in Handbook of the Geometry of Banach Spaces, Vol. II, edited by W.B. Johnson and J. Lindenstrauss, Elsevier (2003).

%\bibitem[PoS1]{PoS1}
%D. Potapov and F. Sukochev.
%\newblock The Haar system in the preduals of hyperfinite factors.
%\newblock Canad. Math. Bull. 54 (2011), no. 2, 347--363.

%\bibitem[Pru]{Pru}
%Bebe Prunaru.
%\newblock On the Choi-Effros Multiplication.
%\newblock Arxiv.

%\bibitem[Rand1]{Rand1}
%B. Randrianantoanina.
%\newblock On isometric stability of complemented subspaces of $L_p$.
%\newblock Israel J. Math. 113 (1999), 45--60.

\bibitem[Ran01]{Rand01}
B. Randrianantoanina.
\newblock Norm-one projections in Banach spaces.
\newblock International Conference on Mathematical Analysis and its Applications (Kaohsiung, 2000). Taiwanese J. Math. 5 (2001), no. 1, 35--95.

%\bibitem[Rand3]{Rand3}
%B. Randrianantoanina.
%\newblock Contractive projections in Orlicz sequence spaces.
%\newblock Abstr. Appl. Anal. 2004, no. 2, 133--145.

%\bibitem[Ray1]{Ray1}
%Raynaud, Yves.
%\newblock On ultrapowers of non commutative $L_p$ spaces.
%\newblock J. Operator Theory 48 (2002), no. 1, 41--68.

\bibitem[Ray03]{Ray03}
Y. Raynaud.
\newblock $L_p$-spaces associated with a von Neumann algebra without trace: a gentle introduction via complex interpolation.
\newblock  Trends in Banach spaces and operator theory (Memphis, TN, 2001), 245--273, Contemp. Math., 321, Amer. Math. Soc., Providence, RI, 2003. 

\bibitem[Ray04]{Ray04}
Y. Raynaud.
\newblock The range of a contractive projection in $L_p(H)$.
\newblock Rev. Mat. Complut. 17 (2004), no. 2, 485--512. 

%\bibitem[Ray4]{Ray4}
%Y. Raynaud.
%\newblock Sur les sous-espaces de $L^p(L^q)$. (French) [On the subspaces of $L^p(L^q)$].
%\newblock Séminaire d'Analyse Fonctionelle 1984/1985, 49--71, Publ. Math. Univ. Paris VII, 26, Univ. Paris VII, Paris, 1986.

%\bibitem[Ray5]{Ray5}
%Y. Raynaud.
%\newblock Sous-espaces $l^r$ et géométrie des espaces $L^p(L^q)$ et $L^\phi$. (French) [$l^r$ subspaces and geometry of $L^p(L^q)$ and $L^\phi$ spaces].
%\newblock C. R. Acad. Sci. Paris Sér. I Math. 301 (1985), no. 6, 299--302.

%\bibitem[Ric]{Ric}
%\'E. Ricard.
%\newblock A Markov dilation for self-adjoint Schur multipliers.
%\newblock Proc. Amer. Math. Soc. 136 (2008), no. 12, 4365--4372.

%\bibitem[Sc]{Sc}
%E. Schechter.
%\newblock Handbook of analysis and its foundations.
%\newblock  Academic Press, Inc., San Diego, CA, 1999.



%\bibitem[RaX]{RaX}
%Y. Raynaud, Q. Xu.
%\newblock On subspaces of non-commutative $L_p$-spaces.
%\newblock J. Funct. Anal. 203 (2003), no. 1, 149-196.

%\bibitem[RoS1]{RoS1}
%D. W. Robinson and E. St\o rmer.
%\newblock Lie and Jordan structure in operator algebras.
%\newblock J. Austral. Math. Soc. Ser. A 29 (1980), no. 2, 129--142.

\bibitem[Sch86]{Sch86}
L. M. Schmitt.
\newblock The Radon-Nikodym theorem for $L^p$-spaces of W*-algebras.
\newblock Publ. Res. Inst. Math. Sci. 22 (1986), no. 6, 1025--1034.

\bibitem[See66]{See66}
G. L. Seever.
\newblock Nonnegative projections on $C_{0}(X)$.
\newblock Pacific J. Math. 17 (1966), 159--166.

%\bibitem[SkV1]{SkV1}
%A. Skalski and A. Viselter.
%\newblock Convolution semigroups on locally compact quantum groups and noncommutative Dirichlet forms.
%\newblock Preprint, arXiv:1709.04873.

%\bibitem[Sto1]{Sto1}
%E. St\o rmer.
%\newblock Conditional expectations and projection maps of von Neumann algebras.
%\newblock  Operator algebras and applications (Samos, 1996), 449--461,
%NATO Adv. Sci. Inst. Ser. C Math. Phys. Sci., 495, Kluwer Acad. Publ., Dordrecht, 1997.

\bibitem[Sto13]{Sto13}
E. St\o rmer.
\newblock Positive linear maps of operator algebras.
\newblock Springer Monographs in Mathematics. Springer, Heidelberg, 2013.

%\bibitem[Sto3]{Sto3}
%E. St\o rmer.
%\newblock Positive linear maps of operator algebras.
%\newblock Acta Math. 110 1963 233--278.

%\bibitem[Sto4]{Sto4}
%E. St\o rmer.
%\newblock Decomposable positive maps on C*-algebras.
%\newblock Proc. Amer. Math. Soc. 86 (1982), no. 3, 402--404.

%\bibitem[Sto5]{Sto5}
%E. St\o rmer.
%\newblock Jordan algebras of type I.
%\newblock Acta Math. 115 1966 165--184.

%\bibitem[Sto6]{Sto6}
%E. St\o rmer.
%\newblock A completely positive map associated with a positive map.
%\newblock Pacific J. Math. 252 (2011), no. 2, 487--492.

%\bibitem[Sto7]{Sto7}
%E. St\o rmer.
%\newblock A decomposition theorem for positive maps, and the projection onto a spin factor.
%\newblock Math. Scand. 118 (2016), no. 1, 106--118.

%\bibitem[Sto8]{Sto8}
%E. St\o rmer.
%\newblock Decomposition of positive projections on C*-algebras.
%\newblock Math. Ann. 247 (1980), no. 1, 21--41.

%\bibitem[Sto9]{Sto9}
%E. St\o rmer.
%\newblock Positive projections onto Jordan algebras and their enveloping von Neumann algebras.
%\newblock Ideas and methods in mathematical analysis, stochastics, and applications (Oslo, 1988), 389--393, Cambridge Univ. Press, Cambridge, 1992.

%\bibitem[Sto10]{Sto10}
%E. St\o rmer.
%\newblock Positive projections with contractive complements on C*-algebras.
%\newblock J. London Math. Soc. (2) 26 (1982), no. 1, 132--142.
%
%\bibitem[Sto11]{Sto11}
%E. St\o rmer.
%\newblock On the Jordan structure of C*-algebras.
%\newblock Trans. Amer. Math. Soc. 120 (1965), 438--447.

%\bibitem[Sto12]{Sto12}
%E. St\o rmer.
%\newblock On projection maps of von Neumann algebras.
%\newblock Math. Scand. 30 (1972), 46--50.
%
%\bibitem[Sto13]{Sto13}
%E. St\o rmer.
%\newblock Multiplicative properties of positive maps.
%\newblock Math. Scand. 100 (2007), no. 1, 184--192.

%\bibitem[Sto14]{Sto14}
%E. St\o rmer.
%\newblock Positive linear maps of C*-algebras. Foundations of quantum mechanics and ordered linear spaces.
%\newblock Advanced Study Inst., Marburg, 1973. Lecture Notes in Phys., Vol. 29, Springer, Berlin, 1974, 85--106

\bibitem[Str81]{Str81}
S. Stratila.
\newblock Modular theory in operator algebras.
\newblock Taylor and Francis, 1981.

%\bibitem[Str14]{Str14}
%M. Straatman.
%\newblock The duality map.
%\newblock Bachelorscriptie, Universiteit Leiden, 2014.

%\bibitem[StZ1]{StZ1}
%S. Stratila and L. Zsido.
%\newblock Lectures on von Neumann algebras. Revision of the 1975 original. Translated from the Romanian by Silviu Teleman.
%\newblock Editura Academiei, Bucharest; Abacus Press, Tunbridge Wells, 1979.


%\bibitem[Tak1]{Tak1}
%M. Takesaki.
%\newblock Theory of operator algebras. I. Reprint of the first (1979) edition.
%\newblock Encyclopaedia of Mathematical Sciences, 124. Operator Algebras and Non-commutative Geometry, 5. Springer-Verlag, Berlin, 2002.

%\bibitem[Tak1]{Tak1}
%M. Takesaki.
%\newblock Duality for crossed products and the structure of von Neumann algebras of type III.
%\newblock Acta Math. 131 (1973), 249--310.

\bibitem[Tak03]{Tak03}
M. Takesaki.
\newblock Theory of operator algebras. II.
\newblock Encyclopaedia of Mathematical Sciences, 125. Operator Algebras and Non-commutative Geometry, 6. Springer-Verlag, Berlin, 2003.

%\bibitem[Tak3]{Tak3}
%M. Takesaki.
%\newblock Conditional expectations in von Neumann algebras. 
%\newblock J. Functional Analysis {\bf 9} (1972), 306-321.

\bibitem[Ter81]{Terp81}
M. Terp.
\newblock $L^p$ spaces associated with von Neumann algebras.
\newblock Notes, Math. Institute, Copenhagen Univ., 1981.

%\bibitem[Tom59]{Tom1}
%J. Tomiyama.
%\newblock On the product projection of norm one in the direct product of operator algebras.
%\newblock T\^ohoku Math. J. (2) 11 (1959), 305--313.

\bibitem[Tza69]{Tza69}
L. Tzafriri.
\newblock Remarks on contractive projections in $L_{p}$-space.
\newblock Israel J. Math. 7 (1969), 9--15.

%\bibitem[Voi]{Voi}
%J. Voigt.
%\newblock Abstract Stein interpolation.
%\newblock Math. Nachr. 157 (1992), 197--199.

\bibitem[Wat88]{Wat88}
K. Watanabe.
\newblock Dual of noncommutative $L^p$-spaces with $0<p<1$.
\newblock Math. Proc. Cambridge Philos. Soc. 103 (1988), no. 3, 503--509.


%\bibitem[Wat2]{Wat2}
%K. Watanabe.
%\newblock On isometries between noncommutative $L^p$-spaces associated with arbitrary von Neumann algebras.
%\newblock J. Operator Theory 28 (1992), no. 2, 267--279.


%\bibitem[Wei]{Wei}
%M. Weigt.
%\newblock Jordan homomorphisms between algebras of measurable operators.
%\newblock Quaest. Math. 32 (2009), no. 2, 203--214.

%\bibitem[Wat]{Wat}
%S. Watanabe.
%\newblock $L_p$-continuity of positive semigroups on finite von Neumann algebras.
%\newblock Proc. Amer. Math. Soc. 102 (1988), no. 4, 840--842.

%\bibitem[Wul]{Wul}
%D. E. Wulbert.
%\newblock A note on the characterization of conditional expectation operators.
%\newblock Pacific J. Math. 34 (1970) 285--288.

\end{thebibliography}
\end{document}